   \newtheorem{lemma}{Lemma}[section]
   \newtheorem{theorem}{Theorem}[section]
   \newcommand{\be}{\begin{equation}}
   \newcommand{\ee}{\end{equation}}
\begin{document}
    \title{Unconditional Stability Of A Two-Step Fourth-Order Modified Explicit Euler/Crank-Nicolson Approach For Solving Time-Variable Fractional Mobile-Immobile Advection-Dispersion Equation}
   \author{Eric Ngondiep$^{\text{\,a\,b}}$}
   \date{$^{\text{\,a\,}}$\small{Department of Mathematics and Statistics, College of Science, Imam Mohammad Ibn Saud\\ Islamic University
        (IMSIU), $90950$ Riyadh $11632,$ Saudi Arabia.}\\
     \text{\,}\\
       $^{\text{\,b\,}}$\small{Hydrological Research Centre, Institute for Geological and Mining Research, 4110 Yaounde-Cameroon.}\\
     \text{,}\\
        \textbf{Email addresses:} ericngondiep@gmail.com/engondiep@imamu.edu.sa}
   \maketitle

   \textbf{Abstract.}
   This paper considers a two-step fourth-order modified explicit Euler/Crank-Nicolson numerical method for solving the time-variable fractional 
   mobile-immobile advection-dispersion model subjects to suitable initial and boundary conditions. Both stability and error estimates of the new 
   approach are deeply analyzed in the $L^{\infty}(0,T;L^{2})$-norm. The theoretical studies show that the proposed technique is unconditionally stable 
   with convergence of order $O(k+h^{4})$, where $h$ and $k$ are space step and time step, respectively. This result indicate that the two-step 
   fourth-order formulation is more efficient than a broad range of numerical schemes widely studied in the literature for the considered problem. 
   Numerical experiments are performed to verify the unconditional stability and convergence rate of the developed algorithm.\\
    \text{\,}\\

   \ \noindent {\bf Keywords:} time-fractional Caputo derivative, time-variable fractional mobile-immobile advection-dispersion equation, explicit Euler scheme, Crank-Nicolson method, two-step fourth-order modified explicit Euler/Crank-Nicolson approach, stability analysis, convergence rate.\\
   \\
   {\bf AMS Subject Classification (MSC). 65M12, 65M06}.

  \section{Introduction}\label{sec1}
   In the last twenty years, fractional calculus and fractional differential equations have found a broad range of applications in fluid flow, sound, electrodynamics, elasticity, biology, finances, geology, electrostatics, heat, hydrology and medical problems \cite{3mc,3zzy,1zzy,2zzy,7zjl,6ww,25ww,27ll}. A large class of complex models are deeply described via variable-order derivatives. Most recently, the time variable fractional order telegraph equation has been shown to be a suitable model for various physical phenomena. Since the equations modeled by the time fractional partial differential equations (FPDEs) are highly complex, there is no method that can compute an exact solution. The big challenge with such a set of equations is the development of fast and efficient numerical approaches in an approximate solution. In the literature, abundant numerical schemes have been proposed for solving time variable (or constant) order FPDEs such as: finite difference schemes of first order accuracy in time and spatial second-order convergence, compact finite difference scheme with convergence order $O(\tau+h^{4})$ and some numerical procedures of higher order $O(\tau^{2-\gamma}+h^{4})$. All these techniques were one-step methods \cite{17mc,27mc,6mc,12mc,28mc,15mc,31mc}. The author \cite{entlfcdr} developed a two-step numerical scheme with convergence order $O(\tau^{2-\frac{\gamma}{2}}+h^{4})$ for solving the time-fractional convection-diffusion-reaction equation with constant order derivative. For classical integer order ordinary/partial differential equations such as: Navier-Stokes equations, systems of ODEs, mixed Stokes-Darcy model, Shallow water problem, convection-diffusion-reaction equation, advection-diffusion model and conduction equation \cite{en1,22zzy,en2,en3,21zzy,en4,en11,5zzy,en12,en5,7mc,en9,en10,6mc}, a wide set of numerical techniques have been developed and deeply analyzed. For more details, we refer the readers to \cite{en6,22mc,en14,24mc,en17,en19,38mc,en7} and references therein. This class of partial differential equations (PDEs) also have a large set of applications. Furthermore, some methods developed for solving inter order PDEs can be used to efficiently compute approximate solutions of time FPDEs with low computational costs \cite{15mc,12mc,17mc}. In this work, we develop a two-step modified explicit Euler/Crank-Nicolson approach in an approximate solution of time-variable fractional mobile-immobile advection-dispersion model with subjects to suitable initial and boundary conditions. The proposed technique is unconditionally stable, convergence with order $O(\tau+h^{4})$, fast and more efficient than a large class of numerical schemes widely studied in the literature for the considered problem \cite{27mc,28ll,29ll,30zjl,22zjl,10zjl}. A time variable fractional mobile-immobile advection-dispersion equation describes a broad range of problems in physical or mathematical systems which include ocean acoustic propagation and heat conduction through a solid \cite{2zjl}. In such an equation, the integer order time derivative term is added to describing the motion of particles conveniently \cite{2zjl}. Furthermore, this equation lies in a class of second order PDEs that govern continuous time random walks with heavy tailed random waiting times.\\

   In this paper, we propose a two-step fourth-order modified explicit Euler/Crank-Nicolson formulation for the time-variable fractional mobile-immobile
   advection-dispersion equation \cite{27mc} and describing by the following initial-boundary value problem

     \begin{equation}\label{1}
      u_{t}(x,t)=-cD_{0t}^{\beta(x,t)}u(x,t)+u_{xx}(x,t)-u_{x}(x,t)+f(x,t),\text{\,\,\,\,on\,\,\,\,}\Omega=(L_{0},L)\times(t_{0},T),
     \end{equation}
     with initial condition
      \begin{equation}\label{2}
      u(x,0)=u_{0}(x),\text{\,\,\,\,on\,\,\,\,}[L_{0},L],
     \end{equation}
     and boundary condition
      \begin{equation}\label{3}
      u(L_{0},t)=g_{1}(t)\text{\,\,\,\,and\,\,\,\,}u(L,t)=g_{2}(t),\text{\,\,\,\,on\,\,\,\,}[0,T],
     \end{equation}
     where $u_{t}$, $u_{x}$, and $u_{2x}$ denote $\frac{\partial u}{\partial t}$, $\frac{\partial u}{\partial x}$, and $\frac{\partial^{2}u}{\partial x^{2}}$, respectively. $f=f(x,t)$ represents the source term, $u_{0}$ is the initial condition whereas $g_{1}$ and $g_{2}$ designate the boundary conditions. $cD_{0t}^{\beta(x,t)}u$, where $(0<\beta_{1}\leq\beta(x,t)\leq\beta_{2}<1)$, is denotes the variable-order fractional derivative which has been introduced in various physical fields and defined (for example, in \cite{22ll}) as
     \begin{equation}\label{4}
      cD_{0t}^{\beta(x,t)}u(x,t)=\frac{1}{\Gamma(1-\lambda)}\int_{0}^{t}\frac{u_{s}(x,s)}{(t-s)^{\beta(x,t)}}ds.
     \end{equation}
     For the sake of discretization and error estimates, we assume that the analytical solution of the initial-boundary value problem $(\ref{1})$-$(\ref{3})$ is regular enough. We remind that the goal of this work is to develop an efficient numerical method for solving the time-variable fractional equation $(\ref{1})$ subjects to initial condition $(\ref{2})$ and boundary condition $(\ref{3})$. Specifically, the attention is focused on the following three items:
     \begin{description}
      \item[(i1)] full description of a two-step fourth-order modified explicit Euler/Crank-Nicolson approach for time-variable FPDE $(\ref{1})$
      with initial-boundary conditions given by $(\ref{2})$ and $(\ref{3})$, respectively,
      \item[(i2)] analysis of the unconditional stability and convergence rate of the new technique,
      \item[(i3)] a broad range of numerical evidences which confirm the theoretical results.
     \end{description}

     The outline of the paper is as follows. In Section $\ref{sec2}$, we construct the two-step fourth-order modified explicit Euler/Crank-Nicolson
      numerical scheme for computing an approximate solution to the initial-boundary value problem $(\ref{1})$-$(\ref{3})$. Section $\ref{sec3}$
      analyzes both stability and error estimates of the new approach using the $L^{\infty}(0,T;L^{2})$-norm. Some numerical examples that confirm
      the theoretical study are presented and discussed in Section $\ref{sec4}$. Finally, in Section $\ref{sec5}$ we draw the general conclusions
      and provide our future investigations.

    \section{Development of the two-step modified explicit Euler/crank-Nicolson numerical approach}\label{sec2}
    In this section we develop a two-step fourth-order modified explicit Euler/crank-Nicolson numerical method for solving the initial-boundary value problem $(\ref{1})$-$(\ref{3})$. Starting with the explicit Euler scheme, the new approach is a two-step implicit method which approximates the time-variable fractional operator $(\ref{4})$ using both forward and backward difference approximations in each step whereas the convection and diffusion terms are approximated using the central difference formulation. To compute a numerical solution of problem $(\ref{1})$-$(\ref{3})$, we introduce a uniform grid of mesh points $(x_{j},t_{n})$, where $x_{j}=L_{0}+jh$, $j=0,1,2,...,M$, and $t_{n}=nk$, for $n=0,1,2,...,N$. In this discretization, $M$ and $N$ are two positive integers, $h=\frac{L-L_{0}}{M}$ and $k\frac{T}{N}$ are the space step and the time step, respectively. The space of grid functions is defined as $\mathcal{U}_{hk}=\{u(x_{j},t_{n}),\text{\,}0\leq j\leq M;\text{\,}0\leq n\leq N\}$. For the convenience of writing, we set $u(x_{j},t_{n})=u^{n}_{j}$. The exact solution and the computed one at the grid point $(x_{j},t_{n})$ are denoted by $u_{j}^{n}$ and $U_{j}^{n}$, respectively. Furthermore, the values of the functions $\beta$ and $f$ at the mesh point $(x_{j},t_{n})$ are given by $\beta_{j}^{n}$ and $f_{j}^{n}$, respectively. Finally we introduce the positive parameter $\alpha\in(0,\frac{1}{2})$.\\

     Furthermore, we define the following operators
       \begin{equation}\label{2a}
        \delta_{t} u_{j}^{n}=\frac{u_{j}^{n+\frac{1}{2}}-u_{j}^{n}}{k/2},\text{\,}\delta_{x}u_{j-\frac{1}{2}}^{n}=\frac{u_{j}^{n}
        -u_{j-1}^{n}}{h},\text{\,}\delta_{x}u_{j+\frac{1}{2}}^{n}=\frac{u_{j+1}^{n}-u_{j}^{n}}{h},\text{\,}\delta_{x}^{2}u_{j}^{n}=
        \frac{u_{j+1}^{n}-2u_{j}^{n}+u_{j-1}^{n}}{h^{2}},
       \end{equation}
       and we introduce the given norms
       \begin{equation}\label{dn}
        \|u^{n}\|_{2}=\left(h\underset{j=2}{\overset{M-2}\sum}|u_{j}^{n}|^{2}\right)^{\frac{1}{2}},\text{\,\,\,}
        \||u|\|_{\mathcal{C}^{6,3}_{D}}=\underset{0\leq r\leq 6}{\underset{0\leq s\leq 3}\max}\left\{\underset{0\leq n\leq
        N}{\max}\left\|\frac{\partial^{r+s}u}{\partial x^{r}\partial t^{s}}(t_{n})\right\|_{L^{2}}\right\}
        \text{\,\,\,and\,\,\,}\||u|\|_{\infty,2}=\underset{0\leq n\leq N}{\max}\|u^{n}\|_{2},
       \end{equation}
       together with the inner products
       \begin{equation*}
        \left(u^{n},v^{n}\right)=h\underset{j=2}{\overset{M-2}\sum}u_{j}^{n}v_{j}^{n},\text{\,\,}\left(\delta_{x}u^{n},v^{n}\right)=
       h\underset{j=1}{\overset{M-2}\sum}\delta_{x}u_{j+\frac{1}{2}}^{n}v_{j}^{n}=h\underset{j=2}{\overset{M-1}\sum}\delta_{x}u_{j-\frac{1}{2}}^{n}v_{j}^{n},
       \text{\,\,}\left(\delta_{x}u^{n},\delta_{x}v^{n}\right)=h\underset{j=1}{\overset{M-2}\sum}\delta_{x}u_{j+\frac{1}{2}}^{n}\delta_{x}v_{j+\frac{1}{2}}^{n},
     \end{equation*}
       \begin{equation}\label{sp}
       \left(\delta_{x}u^{n},\delta_{x}v^{n}\right)=h\underset{j=2}{\overset{M-1}\sum}\delta_{x}u_{j-\frac{1}{2}}^{n}\delta_{x}v_{j-\frac{1}{2}}^{n},
       \end{equation}
       where $D=[L_{0},L]\times[0,T]$, $|\cdot|$ is the $\mathbb{C}$-norm. The spaces $L^{2}(L_{0},L)$, $L^{\infty}(0,T;L^{2})$ and $\mathcal{C}^{6,3}_{D}$ are equipped with the norms $\|\cdot\|_{2}$, $\||\cdot|\|_{\infty,2}$ and $\||\cdot|\|_{\mathcal{C}^{6,3}_{D}}$, respectively, whereas the Hilbert space $L^{2}(L_{0},L)$ is endowed with the scalar product $\left(\cdot,\cdot\right)$.\\

     The application of the Taylor series for $u$ at the grid point $(x_{j},t_{n+\frac{1}{2}+\alpha})$ with time step $\frac{k}{2}$ using forward difference representation gives
     \begin{equation*}
        u^{n+\frac{1}{2}+\alpha}_{j}=u_{j}^{n+\alpha}+\frac{k}{2}u_{t,j}^{n+\alpha}+O(k^{2}).
     \end{equation*}
     Utilizing equation $(\ref{1})$, this becomes
      \begin{equation}\label{5}
      u^{n+\frac{1}{2}+\alpha}_{j}=u^{n+\alpha}_{j}+\frac{k}{2}[-cD_{0t}^{\beta_{j}^{n+\alpha}}u_{j}^{n+\alpha}-u_{x,j}^{n+\alpha}+u_{2x,j}^{n+\alpha}+f_{j}^{n+\alpha}]+
      O(k^{2}).
      \end{equation}
      Expanding the Taylor series for $u$ at the mesh point $(x_{j},t_{n+\frac{1}{2}+\alpha})$ with time step $\frac{k}{2}$ using both forward and backward difference formulations to get
      \begin{equation*}
        u^{n+1+\alpha}_{j}=u^{n+\frac{1}{2}+\alpha}_{j}+\frac{k}{2}u^{n+\frac{1}{2}+\alpha}_{t,j}+\frac{k^{2}}{8}u^{n+\frac{1}{2}+\alpha}_{2t,j}+O(k^{3}),
     \end{equation*}
     \begin{equation*}
     u^{n+\frac{1}{2}+\alpha}_{j}=u^{n+1+\alpha}_{j}-\frac{k}{2}u^{n+1+\alpha}_{t,j}+\frac{k^{2}}{8}u^{n+1+\alpha}_{2t,j}+O(k^{3}).
     \end{equation*}
     Using equation $(\ref{1})$, we obtain
     \begin{equation*}
     u^{n+1+\alpha}_{j}=u^{n+\frac{1}{2}+\alpha}_{j}+\frac{k}{2}[-cD_{0t}^{\beta_{j}^{n+\frac{1}{2}+\alpha}}u_{j}^{n+\frac{1}{2}+\alpha}-
     u_{x,j}^{n+\frac{1}{2}+\alpha}+u_{2x,j}^{n+\frac{1}{2}+\alpha}+f_{j}^{n+\frac{1}{2}+\alpha}]+\frac{k^{2}}{8}u^{n+\frac{1}{2}+\alpha}_{2t,j}+O(k^{3}),
     \end{equation*}
     \begin{equation*}
     u^{n+\frac{1}{2}+\alpha}_{j}=u^{n+1+\alpha}_{j}-\frac{k}{2}[-cD_{0t}^{\beta_{j}^{n+1+\alpha}}u_{j}^{n+1+\alpha}-u_{x,j}^{n+1+\alpha}
     +u_{2x,j}^{n+1+\alpha}+f_{j}^{n+1+\alpha}]+\frac{k^{2}}{8}u^{n+1+\alpha}_{2t,j}+O(k^{3}).
     \end{equation*}
     Subtracting the second equation from the first one and using the Mean value theorem, it is easy to see that
      \begin{equation*}
     u^{n+1+\alpha}_{j}-u^{n+\frac{1}{2}+\alpha}_{j}=\frac{k}{4}\left[-(cD_{0t}^{\beta_{j}^{n+1+\alpha}}u_{j}^{n+1+\alpha}+cD_{0t}^{\beta_{j}^{n+\frac{1}{2}+
     \alpha}}u_{j}^{n+\frac{1}{2}+\alpha})-(u_{x,j}^{n+1+\alpha}+u_{x,j}^{n+\frac{1}{2}+\alpha})+\right.
     \end{equation*}
      \begin{equation*}
     \left.(u_{2x,j}^{n+1+\alpha}+u_{2x,j}^{n+\frac{1}{2}+\alpha})+(f_{j}^{n+1+\alpha}+f_{j}^{n+\frac{1}{2}+\alpha})\right]+O(k^{3}),
     \end{equation*}
     which is equivalent to
      \begin{equation*}
     u^{n+1+\alpha}_{j}=u^{n+\frac{1}{2}+\alpha}_{j}+\frac{k}{4}\left[-(cD_{0t}^{\beta_{j}^{n+1+\alpha}}u_{j}^{n+1+\alpha}+cD_{0t}^{\beta_{j}^{n+\frac{1}{2}+
     \alpha}}u_{j}^{n+\frac{1}{2}+\alpha})-(u_{x,j}^{n+1+\alpha}+u_{x,j}^{n+\frac{1}{2}+\alpha})+\right.
     \end{equation*}
      \begin{equation}\label{6}
     \left.(u_{2x,j}^{n+1+\alpha}+u_{2x,j}^{n+\frac{1}{2}+\alpha})+(f_{j}^{n+1+\alpha}+f_{j}^{n+\frac{1}{2}+\alpha})\right]+O(k^{3}).
     \end{equation}
      For the convenience of writing, we set $\gamma_{j}^{n+\frac{1}{2}+\alpha}=\Gamma(1-\beta_{j}^{n+\frac{1}{2}+\alpha})^{-1}$. So the variable-order fractional time derivative given by $(\ref{4})$ at the grid point $(x_{j},t_{n+\frac{1}{2}+\alpha})$ can be rewritten as
       \begin{equation*}
        cD_{0t}^{\beta_{j}^{n+\frac{1}{2}+\alpha}}u_{j}^{n+\frac{1}{2}+\alpha}=\gamma_{j}^{n+\frac{1}{2}+\alpha}\int_{0}^{t_{n+\frac{1}{2}+\alpha}}
        \frac{u_{s,j}(s)}{(t_{n+\frac{1}{2}+\alpha}-s)^{\beta_{j}^{n+\frac{1}{2}+\alpha}}}ds=\gamma_{j}^{n+\frac{1}{2}+\alpha}\underset{l=0}{\overset{n-1}\sum}
        \int_{t_{l+\frac{1}{2}}}^{t_{l+\frac{3}{2}}}\frac{u_{s,j}(s)}{(t_{n+\frac{1}{2}+\alpha}-s)^{\beta_{j}^{n+\frac{1}{2}+\alpha}}}ds
       \end{equation*}
       \begin{equation}\label{8}
      +\gamma_{j}^{n+\frac{1}{2}+\alpha}\left[\int_{0}^{t_{\frac{1}{2}}}\frac{u_{s,j}(s)}{(t_{n+\frac{1}{2}+\alpha}-s)^{\beta_{j}^{n+\frac{1}{2}+\alpha}}}ds
      +\int_{t_{n+\frac{1}{2}}}^{t_{n+\frac{1}{2}+\alpha}}\frac{u_{s,j}(s)}{(t_{n+\frac{1}{2}+\alpha}-s)^{\beta_{j}^{n+\frac{1}{2}+\alpha}}}d\tau\right].
       \end{equation}

       Let $q_{2,j}^{1,u_{j}}$ be the polynomial of degree two approximating $u_{j}(t)$ at the mesh points $(t_{l},u_{j}^{l})$, $(t_{l+\frac{1}{2}},u_{j}^{l+\frac{1}{2}})$ and $(t_{l+\frac{3}{2}},u_{j}^{l+\frac{3}{2}})$ and let $E^{1,u_{j}}_{j}(t)$ be the corresponding error. Replacing $\lambda$ by $\beta_{j}^{n+\frac{1}{2}+\alpha}$ in equation $(53)$ provided in \cite{entlfcdr} to get
       \begin{equation}\label{9}
        cD_{0t}^{\beta_{j}^{n+\frac{1}{2}+\alpha}}u_{j}^{n+\frac{1}{2}+\alpha}=c\Delta_{0t}^{\beta_{j}^{n+\frac{1}{2}+\alpha}}u_{j}^{n+\frac{1}{2}+\alpha}+
        I_{j}^{\beta_{j}^{n+\frac{1}{2}+\alpha}},
       \end{equation}
       where $c\Delta_{0t}^{\beta_{j}^{n+\frac{1}{2}+\alpha}}u_{j}^{n+\frac{1}{2}+\alpha}$ and $I_{j}^{\beta_{j}^{n+\frac{1}{2}+\alpha}}$ are defined by
       \begin{equation}\label{10}
        c\Delta_{0t}^{\beta_{j}^{n+\frac{1}{2}+\alpha}}u_{j}^{n+\frac{1}{2}+\alpha}=k^{1-\beta_{j}^{n+\frac{1}{2}+\alpha}}(1-\beta_{j}^{n+\frac{1}{2}+\alpha})^{-1}
        \gamma_{j}^{n+\frac{1}{2}+\alpha}\underset{l=0}{\overset{n}\sum}a_{n+\frac{1}{2},l+\frac{1}{2}}^{\alpha,\beta_{j}^{n+\frac{1}{2}+\alpha}}\delta_{t}u_{j}^{l},
       \end{equation}
       \begin{equation*}
        I_{j}^{\beta_{j}^{n+\frac{1}{2}+\alpha}}=\gamma_{j}^{n+\frac{1}{2}+\alpha}\left[\int_{0}^{t_{\frac{1}{2}}}\frac{u_{s,j}(s)}{(t_{n+\frac{1}{2}+\alpha}-
       s)^{\beta_{j}^{n+\frac{1}{2}+\alpha}}}ds+\int_{t_{n+\frac{1}{2}}}^{t_{n+\frac{1}{2}+\alpha}}\frac{u_{s,j}(s)}
       {(t_{n+\frac{1}{2}+\alpha}-s)^{\beta_{j}^{n+\frac{1}{2}+\alpha}}}ds\right]+
       \end{equation*}
       \begin{equation}\label{11}
       \gamma_{j}^{n+\frac{1}{2}+\alpha}\underset{i=0}{\overset{n-1}\sum}\int_{t_{i+\frac{1}{2}}}^{t_{i+\frac{3}{2}}}
       \frac{E^{1,u_{j}}_{s,j}(s)}{(t_{n+\frac{1}{2}+\alpha}-s)^{\beta_{j}^{n+\frac{1}{2}+\alpha}}}ds.
       \end{equation}
       The summation index in equation $(\ref{10})$ varies in the range: $l=0,\frac{1}{2},1,\frac{3}{2},2,...,n$, whereas the summation index in $(\ref{11})$
       satisfies: $i=0,1,2,3,...,n-1$. In relation $(\ref{10})$, the coefficients $a_{n+\frac{1}{2},l+\frac{1}{2}}^{\alpha,\beta_{j}^{n+\frac{1}{2}+\alpha}}$
       form a generalized sequence defined in \cite{entlfcdr}, page $11$, when replacing $\lambda$ with $\beta_{j}^{n+\frac{1}{2}+\alpha}$ by
    \begin{equation}\label{12}
    a_{\frac{1}{2},\frac{1}{2}}^{\alpha,\beta_{j}^{\alpha}}=\frac{1}{2}\alpha^{-\beta_{j}^{\alpha}},
    \text{\,\,\,and\,\,\,for\,\,\,}n\geq1,\text{\,\,\,\,}a_{n+\frac{1}{2},\frac{1}{2}}^{\alpha,\beta_{j}^{n+\frac{1}{2}+\alpha}}=
    \widetilde{\text{\,\.{f}}}_{n+\frac{1}{2},0}^{\alpha,\beta_{j}^{n+\frac{1}{2}+\alpha}},
   \end{equation}
   \begin{equation}\label{13}
    a_{n+\frac{1}{2},l+\frac{1}{2}}^{\alpha,\beta_{j}^{n+\frac{1}{2}+\alpha}}=\begin{array}{c}
                                          \left\{
                                            \begin{array}{ll}
                                              \widetilde{d}_{n+\frac{1}{2},l}^{\alpha,\beta_{j}^{n+\frac{1}{2}+\alpha}}-
                                              \widetilde{f}_{n+\frac{1}{2},l}^{\alpha,\beta_{j}^{n+\frac{1}{2}+\alpha}},
                                              & \hbox{if $l=1,2,3,...,n-1$,} \\
                                              \text{\,}\\
                                              \widetilde{f}_{n+\frac{1}{2},l-\frac{1}{2}}^{\alpha,\beta_{j}^{n+\frac{1}{2}+\alpha}}, & \hbox{if
                                                $l=\frac{1}{2},\frac{3}{2},...,n-\frac{1}{2}$,} \\
                                               \text{\,}\\
                                              \widetilde{f}_{n+\frac{1}{2},n-1}^{\alpha,\beta_{j}^{n+\frac{1}{2}+\alpha}}+
                                              \widetilde{f}_{n+\frac{1}{2},n}^{\alpha,\beta_{j}^{n+\frac{1}{2}+\alpha}},
                                              & \hbox{if $l=n$,} \\
                                            \end{array}
                                          \right.
                                        \end{array}
         \end{equation}
          where the terms $\widetilde{f}_{\frac{1}{2},0}^{\alpha,\beta_{j}^{n+\frac{1}{2}+\alpha}}$,$\widetilde{\text{\,\.{f}}}_{n+\frac{1}{2},0}^{\alpha,
          \beta_{j}^{n+\frac{1}{2}+\alpha}}$, $\widetilde{f}_{n+\frac{1}{2},s}^{\alpha,\beta_{j}^{n+\frac{1}{2}+\alpha}}$ and $\widetilde{d}_{n+\frac{1}{2},s}^{\alpha,\beta_{j}^{n+\frac{1}{2}+\alpha}}$ (for $s=0,1,2,...,n$) are given in \cite{entlfcdr}, page $5$-$6$, by replacing $\lambda$ by $\beta_{j}^{n+\frac{1}{2}+\alpha}$ by
         \begin{equation*}
        \widetilde{f}^{\alpha,\beta_{j}^{n+\frac{1}{2}+\alpha}}_{n+\frac{1}{2},n}=\alpha^{1-\beta_{j}^{n+\frac{1}{2}+\alpha}},\text{\,\,}
        \widetilde{f}^{\alpha,\beta_{j}^{\frac{1}{2}+\alpha}}_{\frac{1}{2},0}=(\frac{1}{2}+\alpha)^{1-\beta_{j}^{\frac{1}{2}+\alpha}},
        \text{\,\,}\widetilde{\text{\,\.{f}}}^{\alpha,\beta_{j}^{n+\frac{1}{2}+\alpha}}_{n+\frac{1}{2},0}=(n+\frac{1}{2}+\alpha)^{1-\beta_{j}^{n+\frac{1}{2}+\alpha}}
        -(n+\alpha)^{1-\beta_{j}^{n+\frac{1}{2}+\alpha}},
       \end{equation*}
        \begin{equation*}
        \widetilde{d}^{\alpha,\beta_{j}^{n+\frac{1}{2}+\alpha}}_{n+\frac{1}{2},i}=(n+\alpha-i)^{1-\beta_{j}^{n+\frac{1}{2}+\alpha}}-(n+\alpha-i-1)^{1-
        \beta_{j}^{n+\frac{1}{2}+\alpha}},\text{\,\,\,}\widetilde{f}^{\alpha,\beta_{j}^{n+\frac{1}{2}+\alpha}}_{n+\frac{1}{2},i}=
        \frac{2}{2-\beta_{j}^{n+\frac{1}{2}+\alpha}}\left[(n+\alpha-i)^{2-\beta_{j}^{n+\frac{1}{2}+\alpha}}\right.
       \end{equation*}
       \begin{equation}\label{14}
        \left.-(n+\alpha-i-1)^{2-\beta_{j}^{n+\frac{1}{2}+\alpha}}\right]-\frac{1}{2}\left[(n+\alpha-i)^{1-\beta_{j}^{n+\frac{1}{2}+\alpha}}
        +3(n+\alpha-i-1)^{1-\beta_{j}^{n+\frac{1}{2}+\alpha}}\right],
       \end{equation}
        for $n\geq1$ and $i=0,1,2,...,n-1$. Furthermore,
        \begin{equation*}
        cD_{0t}^{\beta_{j}^{\alpha}}u_{j}^{\alpha}=\frac{1}{\Gamma(1-\beta_{j}^{\alpha})}\int_{0}^{t_{\alpha}}\frac{u_{s,j}(s)}
        {(t_{\alpha}-s)^{\beta_{j}^{\alpha}}}ds=\frac{1}{\Gamma(1-\lambda)}\left[\int_{0}^{t_{\alpha}}\frac{u_{j}^{\alpha}-u_{j}^{0}}
        {\alpha k(t_{\alpha}-s)^{\beta_{j}^{\alpha}}}ds+\int_{0}^{t_{\alpha}}\frac{(u_{s,j}(s)-P_{1,s}^{0,u_{j}}(s)}
        {(t_{\alpha}-s)^{\beta_{j}^{\alpha}}ds}\right]
       \end{equation*}
       \begin{equation}\label{15}
       =c\Delta_{0t}^{\beta_{j}^{\alpha}}u_{j}^{\alpha}+J_{j}^{\beta_{j}^{\alpha}},
       \end{equation}
       where
        \begin{equation}\label{16}
      c\Delta_{0t}^{\beta_{j}^{\alpha}}u_{j}^{\alpha}=k^{1-\beta_{j}^{\alpha}}\Gamma(2-\beta_{j}^{\alpha})^{-1}
      a_{\frac{1}{2},\frac{1}{2}}^{\alpha,\beta_{j}^{\alpha}}\delta_{t}^{\alpha}u_{j}^{0},
       \end{equation}
       \begin{equation}\label{16a}
      \delta_{t}^{\alpha}u_{j}^{0}=\frac{2}{k}(u_{j}^{\alpha}-u_{j}^{0})\text{\,\,\,and\,\,\,}J_{j}^{\beta_{j}^{\alpha}}=
      \frac{1}{\Gamma(1-\beta_{j}^{\alpha})}\int_{0}^{t_{\alpha}}\frac{u_{s,j}(s)-P_{1,s}^{0,u_{j}}(s)}{(t_{\alpha}-s)^{\beta_{j}^{\alpha}}}ds.
       \end{equation}
       $P_{1,s}^{0,u_{j}}$ is the first-order polynomial interpolating $u_{j}$ at the mesh point $(u_{j}^{0},t_{0})$ and $(u_{j}^{\alpha},t_{\alpha})$.\\

       In a similar manner, setting $\gamma_{j}^{n+1+\alpha}=\Gamma(1-\beta_{j}^{n+1+\alpha})^{-1}$ and replacing $\lambda$ by $\beta_{j}^{n+1+\alpha}$
       in the formulas obtained in \cite{entlfcdr}, pages: $8$, $12$, and $17$, results in
       \begin{equation}\label{17}
        cD_{0t}^{\beta_{j}^{n+1+\alpha}}u_{j}^{n+1+\alpha}=c\Delta_{0t}^{\beta_{j}^{n+1+\alpha}}u_{j}^{n+1+\alpha}+J_{j}^{\beta_{j}^{n+1+\alpha}},
       \end{equation}
       where
       \begin{equation}\label{18}
        c\Delta_{0t}^{\beta_{j}^{n+1+\alpha}}u_{j}^{n+1+\alpha}=k^{1-\beta_{j}^{n+1+\alpha}}(1-\beta_{j}^{n+1+\alpha})^{-1}
        \gamma_{j}^{n+1+\alpha}\underset{l=0}{\overset{n+\frac{1}{2}}\sum}a_{n+1,l+\frac{1}{2}}^{\alpha,\beta_{j}^{n+1+\alpha}}\delta_{t}u_{j}^{l},
       \end{equation}
        with
       \begin{equation}\label{19}
       a_{1,\frac{1}{2}}^{\alpha,\beta_{j}^{1+\alpha}}=\widetilde{d}_{1,0}^{\alpha,\beta_{j}^{1+\alpha}}-\widetilde{f}_{1,0}^{\alpha,\beta_{j}^{1+\alpha}},
       \text{\,\,\,}a_{1,1}^{\alpha,\beta_{j}^{1+\alpha}}=\widetilde{f}_{1,0}^{\alpha,\beta_{j}^{1+\alpha}}+\widetilde{f}_{1,1}^{\alpha,\beta_{j}^{1+\alpha}},
      \end{equation}
       and for $n\geq1$
      \begin{equation}\label{20}
      a_{n+1,l+\frac{1}{2}}^{\alpha,\beta_{j}^{n+1+\alpha}}=\begin{array}{c}
                                          \left\{
                                            \begin{array}{ll}
                                              \widetilde{d}_{n+1,l}^{\alpha,\beta_{j}^{n+1+\alpha}}-\widetilde{f}_{n+1,l}^{\alpha,\beta_{j}^{n+1+\alpha}},
                                              & \hbox{if $l=0,1,2,3,...,n$,} \\
                                              \text{\,}\\
                                              \widetilde{f}_{n+1,l-\frac{1}{2}}^{\alpha,\beta_{j}^{n+1+\alpha}}, & \hbox{if
                                                $l=\frac{1}{2},\frac{3}{2},...,n-\frac{1}{2}$,} \\
                                               \text{\,}\\
                                              \widetilde{f}_{n+1,n}^{\alpha,\beta_{j}^{n+1+\alpha}}+\widetilde{f}_{n+1,n+1}^{\alpha,\beta_{j}^{n+1+\alpha}},
                                              & \hbox{if $l=n+\frac{1}{2}$.} \\
                                            \end{array}
                                          \right.
                                        \end{array}
         \end{equation}
          Here the terms $\widetilde{f}_{n+1,r}^{\alpha,\beta_{j}^{n+1+\alpha}}$ and $\widetilde{d}_{n+1,r}^{\alpha,\beta_{j}^{n+1+\alpha}}$, for $n\geq1$ and $r=0,1,2,...,n+1$, are obtained by replacing $\lambda$ by $\beta_{j}^{n+1+\alpha}$ in \cite{entlfcdr}, page $8$-$9$, by
         \begin{equation*}
         \widetilde{d}^{\alpha,\beta_{j}^{n+1+\alpha}}_{n+1,i}=(n+1+\alpha-i)^{1-\beta_{j}^{n+1+\alpha}}-(n+\alpha-i)^{1-\beta_{j}^{n+1+\alpha}},
        \text{\,\,}\widetilde{f}^{\alpha,\beta_{j}^{n+1+\alpha}}_{n+1,n+1}=\alpha^{1-\beta_{j}^{n+1+\alpha}},
       \end{equation*}
       \begin{equation*}
        \widetilde{f}^{\alpha,\beta_{j}^{n+1+\alpha}}_{n+1,i}=\frac{2}{2-\beta_{j}^{n+1+\alpha}}\left[(n+1+\alpha-i)^{2-\beta_{j}^{n+1+\alpha}}
        -(n+\alpha-i)^{2-\beta_{j}^{n+1+\alpha}}\right]
       \end{equation*}
        \begin{equation}\label{21}
        -\frac{1}{2}\left[(n+1+\alpha-i)^{1-\beta_{j}^{n+1+\alpha}}+3(n+\alpha-i)^{1-\beta_{j}^{n+1+\alpha}}\right].
       \end{equation}
       We recall that in equation $(\ref{18})$ the summation index varies in the range $l=0,\frac{1}{2},1,\frac{3}{2},2,...,n+\frac{1}{2}$. Furthermore, replacing in relation $(52)$ given in \cite{entlfcdr}, $\lambda$ with $\beta_{j}^{n+1+\alpha}$, we obtain
       \begin{equation}\label{22}
       J_{j}^{\beta_{j}^{n+1+\alpha}}=\gamma_{j}^{n+1+\alpha}\left[\int_{t_{n+1}}^{t_{n+1+\alpha}}\frac{u_{s,j}(s)}
       {(t_{n+1+\alpha}-s)^{\beta_{j}^{n+1+\alpha}}}ds+\underset{i=0}{\overset{n}\sum}\int_{t_{i}}^{t_{i+1}}
       \frac{E^{2,u_{j}}_{s,j}(s)}{(t_{n+1+\alpha}-s)^{\beta_{j}^{n+1+\alpha}}}ds\right],
       \end{equation}
       where $E^{2,u_{j}}_{s,j}(s)$ is the error associated with the quadratic polynomial interpolating $u_{j}(t)$ at the grid points $(u_{j}^{l},t_{l})$,
       $(u_{j}^{l+\frac{1}{2}},t_{l+\frac{1}{2}})$ and $(u_{j}^{l+1},t_{l+1})$.\\

       Now, we should find the spatial fourth-order approximations of the terms $u_{x,j}^{n+s+\alpha}$ and $u_{2x,j}^{n+s+\alpha}$, for $s=0,\frac{1}{2}$ and $1$. Using the Taylor series expansion for $u$ about the grid point $(x_{j},t_{n+s+\alpha})$ with step size $h$ using both forward and backward
        representations, the author \cite{entlfcdr} has shown that
      \begin{equation}\label{23}
       u_{2x,j}^{n+s+\alpha}=\frac{1}{12h^{2}}\left[-u_{j+2}^{n+s+\alpha}+16u_{j+1}^{n+s+\alpha}-
      30u_{j}^{n+s+\alpha}+16u_{j-1}^{n+s+\alpha}-u_{j-2}^{n+s+\alpha}\right]+O(h^{4})=\delta_{2x}^{4}u_{j}^{n+s+\alpha}+O(h^{4}),
      \end{equation}
       \begin{equation}\label{24}
       u_{x,j}^{n+s+\alpha}=\frac{1}{12h}\left[-u_{j+2}^{n+s+\alpha}+8u_{j+1}^{n+s+\alpha}-
      8u_{j-1}^{n+s+\alpha}+u_{j-2}^{n+s+\alpha}\right]+O(h^{4})=\delta_{x}^{4}u_{j}^{n+s+\alpha}+O(h^{4})
      \end{equation}
      for $s\in\{0,\frac{1}{2},1\}$, where
       \begin{equation}\label{23a}
       \delta_{2x}^{4}u_{j}^{n+s+\alpha}=\frac{1}{12h^{2}}\left[-u_{j+2}^{n+s+\alpha}+16u_{j+1}^{n+s+\alpha}-
      30u_{j}^{n+s+\alpha}+16u_{j-1}^{n+s+\alpha}-u_{j-2}^{n+s+\alpha}\right],
       \end{equation}
        \begin{equation}\label{24a}
       \delta_{x}^{4}u_{j}^{n+s+\alpha}=\frac{1}{12h}\left[-u_{j+2}^{n+s+\alpha}+8u_{j+1}^{n+s+\alpha}-
      8u_{j-1}^{n+s+\alpha}+u_{j-2}^{n+s+\alpha}\right].
       \end{equation}
      For $s=0$, replacing $n+1$ by $n$ in $(\ref{17})$ and plugging the obtained equation with $(\ref{24})$, $(\ref{23})$ and $(\ref{5})$ yields
       \begin{equation}\label{25a}
      u^{n+\frac{1}{2}+\alpha}_{j}=u^{n+\alpha}_{j}+\frac{k}{2}\left[-c\Delta_{0t}^{\beta_{j}^{n+\alpha}}u_{j}^{n+\alpha}-J_{j}^{\beta_{j}^{n+\alpha}}-
      \delta_{x}^{4}u_{j}^{n+\alpha}+\delta_{2x}^{4}u_{j}^{n+\alpha}+f_{j}^{n+\alpha}\right]+O(k^{2}+kh^{4}).
      \end{equation}
      Replacing $n+1$ by $n$ in $(\ref{18})$, substituting the new equation into $(\ref{25a})$  and rearranging terms results in
      \begin{equation*}
       u^{n+\frac{1}{2}+\alpha}_{j}=u^{n+\alpha}_{j}-\frac{1}{2}k^{2-\beta_{j}^{n+\alpha}}(1-\beta_{j}^{n+\alpha})^{-1}\gamma_{j}^{n+\alpha}
      \underset{l=0}{\overset{n-\frac{1}{2}}\sum}a_{n,l+\frac{1}{2}}^{\alpha,\beta_{j}^{n+\alpha}}\delta_{t}u_{j}^{l}
      +\frac{k}{2}\left[-\delta_{x}^{4}u_{j}^{n+\alpha}+\delta_{2x}^{4}u_{j}^{n+\alpha}+f_{j}^{n+\alpha}\right]
       \end{equation*}
      \begin{equation}\label{25}
      -\frac{k}{2}J_{j}^{\beta_{j}^{n+\alpha}}+O(k^{2}+kh^{4}).
      \end{equation}
      Since $\gamma_{j}^{n+\alpha}=\Gamma(1-\beta_{j}^{n+\alpha})^{-1}$ so, $(1-\beta_{j}^{n+\alpha})^{-1}\gamma_{j}^{n+\alpha}=\Gamma(2-\beta_{j}^{n+\alpha})^{-1}$. This fact, together with $(\ref{25})$ provides
      \begin{equation*}
       u^{n+\frac{1}{2}+\alpha}_{j}=u^{n+\alpha}_{j}-\frac{1}{2}k^{2-\beta_{j}^{n+\alpha}}\Gamma(2-\beta_{j}^{n+\alpha})^{-1}
      \underset{l=0}{\overset{n-\frac{1}{2}}\sum}a_{n,l+\frac{1}{2}}^{\alpha,\beta_{j}^{n+\alpha}}\delta_{t}u_{j}^{l}
      +\frac{k}{2}\left[-\delta_{x}^{4}u_{j}^{n+\alpha}+\delta_{2x}^{4}u_{j}^{n+\alpha}+f_{j}^{n+\alpha}\right]
       \end{equation*}
      \begin{equation}\label{26}
      -\frac{k}{2}J_{j}^{\beta_{j}^{n+\alpha}}+O(k^{2}+kh^{4}).
      \end{equation}
      Setting $u_{0x}(x,t)=u(x,t)$ and applying the Taylor series expansion for the functions $u_{mx}$ ($m=0,1,2$) at the mesh points $(x_{j},t_{n})$, $(x_{j},t_{n+\frac{1}{2}+\alpha})$ and $(x_{j},t_{n+1+\alpha})$ with time step $\frac{k}{2}$ gives
      \begin{equation*}
       u_{mx,j}^{n+\frac{1}{2}+\alpha}=(1+2\alpha)u_{mx,j}^{n+\frac{1}{2}}-2\alpha u_{mx,j}^{n}+O(k^{2})=
         2\alpha u_{mx,j}^{n+1}+(1-2\alpha)u_{mx,j}^{n+\frac{1}{2}}+O(k^{2}),
       \end{equation*}
        \begin{equation}\label{27}
         u_{mx,j}^{n+1+\alpha}=(1+2\alpha)u_{mx,j}^{n+1}-2\alpha u_{mx,j}^{n+\frac{1}{2}}+O(k^{2}),\text{\,\,\,}
         u^{n+\alpha}_{mx,j}=2\alpha u^{n+\frac{1}{2}}_{mx,j}+(1-2\alpha)ku^{n}_{mx,j}+O(k^{2}).
      \end{equation}
      Combining $(\ref{27})$ and $(\ref{23})$-$(\ref{24})$ to get
       \begin{equation*}
       \delta_{mx}^{4}u_{j}^{n+\frac{1}{2}+\alpha}=(1+2\alpha)\delta_{mx}^{4}u_{j}^{n+\frac{1}{2}}-2\alpha\delta_{mx}^{4}u_{j}^{n}+O(k^{2}+h^{4})=
         2\alpha\delta_{mx}^{4}u_{j}^{n+1}+(1-2\alpha)\delta_{mx}^{4}u_{j}^{n+\frac{1}{2}}+O(k^{2}+h^{4}),
       \end{equation*}
        \begin{equation}\label{28}
         \delta_{mx}^{4}u_{j}^{n+1+\alpha}=(1+2\alpha)\delta_{mx}^{4}u_{j}^{n+1}-2\alpha\delta_{mx}^{4}u_{j}^{n+\frac{1}{2}}+O(k^{2}+h^{4})
         ,\text{\,\,}\delta_{mx}^{4}u^{n+\alpha}_{j}=2\alpha\delta_{mx}^{4}u_{j}^{n+\frac{1}{2}}+(1-2\alpha)\delta_{mx}^{4}u_{j}^{n}+O(k^{2}+h^{4}),
      \end{equation}
      where we set $\delta_{0x}^{4}u^{n+\alpha}_{j}=u^{n+\alpha}_{j}$, $\delta_{0x}^{4}u^{n+\frac{1}{2}}_{j}=u^{n+\frac{1}{2}}_{j}$ and $\delta_{0x}^{4}u^{n}_{j}=u^{n}_{j}$. For $m=0,1,2$, substituting the last equation of $(\ref{28})$ into $(\ref{26})$ and performing simple computations to obtain
       \begin{equation*}
       u^{n+\frac{1}{2}}_{j}-\alpha k(\delta_{2x}^{4}-\delta_{x}^{4})u^{n+\frac{1}{2}}_{j}= u^{n}-2^{-1}k^{2-\beta_{j}^{n+\alpha}}
       \Gamma(2-\beta_{j}^{n+\alpha})^{-1}\underset{l=0}{\overset{n-\frac{1}{2}}\sum}a_{n,l+\frac{1}{2}}^{\alpha,\beta_{j}^{n+\alpha}}
       \delta_{t}u_{j}^{l}+
       \end{equation*}
      \begin{equation}\label{29}
      (\frac{1}{2}-\alpha)k(\delta_{2x}^{4}-\delta_{x}^{4})u^{n}_{j}+\frac{k}{2}f_{j}^{n+\alpha}-\frac{k}{2}J_{j}^{\beta_{j}^{n+\alpha}}+O(k^{2}+k^{3}+kh^{4}),
      \text{\,\,\,\,\,for\,\,\,\,}n\geq1.
      \end{equation}
      For $n=0$, utilizing relation $(\ref{16a})$ and substituting $(\ref{15})$ into $(\ref{5})$ to get
      \begin{equation}\label{30a}
      u^{\frac{1}{2}+\alpha}_{j}=u^{\alpha}_{j}+\frac{k}{2}\left[-c\Delta_{0t}^{\beta_{j}^{\alpha}}u_{j}^{\alpha}-J_{j}^{\beta_{j}^{\alpha}}-
      u_{x,j}^{n+\alpha}+u_{2x,j}^{\alpha}+f_{j}^{\alpha}\right]+O(k^{2}).
      \end{equation}
      Since $(1-\beta_{j}^{\alpha})^{-1}\gamma^{\alpha}_{j}=\Gamma(2-\beta_{j}^{\alpha})^{-1}$, using $(\ref{16})$, equation $(\ref{30a})$ is equivalent to
      \begin{equation}\label{31a}
      u^{\frac{1}{2}+\alpha}_{j}=u^{\alpha}_{j}+\frac{k}{2}\left[-k^{1-\beta_{j}^{\alpha}}\Gamma(2-\beta_{j}^{\alpha})^{-1}
      a_{\frac{1}{2},\frac{1}{2}}^{\alpha,\beta_{j}^{\alpha}}\delta_{t}^{\alpha}u_{j}^{0}-J_{j}^{\beta_{j}^{\alpha}}-
      u_{x,j}^{\alpha}+u_{2x,j}^{\alpha}+f_{j}^{\alpha}\right]+O(k^{2}).
      \end{equation}
      Replacing $n$ and $s$ by $0$ into $(\ref{23})$-$(\ref{24})$ and $(\ref{27})$-$(\ref{28})$ and $m$ by $0,1,2$, into the first equations of $(\ref{27})$ and $(\ref{28})$, combining the obtained equations together with $(\ref{31a})$ and utilizing equality $\delta_{t}^{\alpha}u_{j}^{0}=\frac{2}{k}(u^{\alpha}_{j}-u_{j}^{0})$, it is not difficult to see that
      \begin{equation}\label{32a}
       u^{\frac{1}{2}}_{j}-\alpha k(\delta_{2x}^{4}-\delta_{x}^{4})u^{\frac{1}{2}}_{j}=u^{0}-\alpha k\theta_{0j}^{\alpha}\delta_{t}u_{j}^{0}+
      (\frac{1}{2}-\alpha)k(\delta_{2x}^{4}-\delta_{x}^{4})u^{0}_{j}+\frac{k}{2}f_{j}^{\alpha}-\frac{k}{2}J_{j}^{\beta_{j}^{\alpha}}+O(k^{2}+k^{3}+kh^{4}),
      \end{equation}
      where
      \begin{equation}\label{33a}
       \theta_{0j}^{\alpha}=k^{1-\beta_{j}^{\alpha}}\Gamma(2-\beta_{j}^{\alpha})^{-1}a_{\frac{1}{2},\frac{1}{2}}^{\alpha,\beta_{j}^{\alpha}}.
      \end{equation}
      Suppose $U^{n}=(U^{n}_{2},U^{n}_{3},...,U^{n}_{M-2})$ be the approximate solution vector at time level $n$ and $u^{n}=(u^{n}_{2},u^{n}_{3},...,u^{n}_{M-2})$ be the analytical one at time $t_{n}$. Truncating the error terms in both equations $(\ref{29})$ and $(\ref{32a})$, we obtain the first-step of the new approach
      \begin{equation}\label{34a}
       U^{\frac{1}{2}}_{j}-\alpha k(\delta_{2x}^{4}-\delta_{x}^{4})U^{\frac{1}{2}}_{j}=U^{0}-\alpha k\theta_{0j}^{\alpha}\delta_{t}U_{j}^{0}+
      (\frac{1}{2}-\alpha)k(\delta_{2x}^{4}-\delta_{x}^{4})U^{0}_{j}+\frac{k}{2}f_{j}^{\alpha},
      \end{equation}
      \begin{equation*}
       U^{n+\frac{1}{2}}_{j}-\alpha k(\delta_{2x}^{4}-\delta_{x}^{4})U^{n+\frac{1}{2}}_{j}=U^{n}-2^{-1}k^{2-\beta_{j}^{n+\alpha}}
       \Gamma(2-\beta_{j}^{n+\alpha})^{-1}\underset{l=0}{\overset{n-\frac{1}{2}}\sum}a_{n,l+\frac{1}{2}}^{\alpha,\beta_{j}^{n+\alpha}}
       \delta_{t}U_{j}^{l}+
       \end{equation*}
      \begin{equation}\label{30}
      (\frac{1}{2}-\alpha)k(\delta_{2x}^{4}-\delta_{x}^{4})U^{n}_{j}+\frac{k}{2}f_{j}^{n+\alpha},\text{\,\,\,\,\,for\,\,\,\,}n\geq1.
      \end{equation}
      In addition, setting
       \begin{equation}\label{36a}
      f^{n+s+\alpha}=(f^{n+s+\alpha}_{2},f^{n+s+\alpha}_{3},...,f^{n+s+\alpha}_{M-2}) \text{\,\,\,and\,\,\,} \theta_{l+\frac{1}{2}}^{n+s+\alpha}=(\theta_{l+\frac{1}{2},2}^{n+s+\alpha},\theta_{l+\frac{1}{2},3}^{n+s+\alpha},...,
      \theta_{l+\frac{1}{2},M-2}^{n+s+\alpha}),
      \end{equation}
      where
      \begin{equation}\label{35a}
      \theta_{l+\frac{1}{2},j}^{n+s+\alpha}=k^{1-\beta_{j}^{n+s+\alpha}}\Gamma(2-\beta_{j}^{n+s+\alpha})^{-1}
      a_{n+s,l+\frac{1}{2}}^{\alpha,\beta_{j}^{n+s+\alpha}},
      \end{equation}
      $s\in\{0,2^{-1},1\}$. Thus, equations $(\ref{34a})$ and $(\ref{35a})$ can be expressed in the matrix form as
      \begin{equation}\label{35aa}
       A_{0}U^{\frac{1}{2}}+k\theta_{0}^{\alpha}*\delta_{t}U^{0}=A_{1}U^{0}+\frac{k}{2}f^{\alpha},
      \end{equation}
      \begin{equation}\label{36aa}
       A_{0}U^{n+\frac{1}{2}}+\frac{k}{2}\underset{l=0}{\overset{n-\frac{1}{2}}\sum}\theta_{l+\frac{1}{2}}^{n+\alpha}*\delta_{t}U^{l}=
       A_{1}U^{n}+\frac{k}{2}f^{n+\alpha},
       \end{equation}
      where "*" denotes the componentwise usual multiplication between two vectors, $A_{0}$ and $A_{1}$ are two $(M-2)\times(M-2)$ "pentadiagonal"
      matrices defined by
      \begin{equation}\label{M1}
        A_{0}=\begin{bmatrix}
                a_{kh}^{0} & b_{kh}^{0} & c_{kh}^{0} & 0 & \cdots & \cdots & 0 \\
                d_{kh}^{0} & a_{kh}^{0} & a_{kh}^{0} & a_{kh}^{0} & 0 & & \vdots \\
                e_{kh}^{0} & d_{kh}^{0} & a_{kh}^{0} & b_{kh}^{0} & c_{kh}^{0} & \ddots & \vdots \\
                0 & \ddots & \ddots & \ddots & \ddots & \ddots & 0\\
                \vdots & \ddots & \ddots & \ddots & \ddots & \ddots & c_{kh}^{0}\\
                \vdots &  & \ddots & \ddots & d_{kh}^{0} & a_{kh}^{0} & b_{kh}^{0}\\
                0 & \cdots & \cdots & 0 & e_{kh}^{0} & d_{kh}^{0} & a_{kh}^{0} \\
              \end{bmatrix}
              \text{\,\,\,and\,\,\,}
         A_{1}=\begin{bmatrix}
                a_{kh}^{1} & b_{kh}^{1} & c_{kh}^{1} & 0 & \cdots & \cdots & 0 \\
                d_{kh}^{1} & a_{kh}^{1} & a_{kh}^{1} & a_{kh}^{1} & 0 & & \vdots \\
                e_{kh}^{1} & d_{kh}^{1} & a_{kh}^{1} & b_{kh}^{1} & c_{kh}^{1} & \ddots & \vdots \\
                0 & \ddots & \ddots & \ddots & \ddots & \ddots & 0\\
                \vdots & \ddots & \ddots & \ddots & \ddots & \ddots & c_{kh}^{1}\\
                \vdots &  & \ddots & \ddots & d_{kh}^{1} & a_{kh}^{1} & b_{kh}^{1}\\
                0 & \cdots & \cdots & 0 & e_{kh}^{1} & d_{kh}^{1} & a_{kh}^{1} \\
              \end{bmatrix},
      \end{equation}
      where
       \begin{equation*}
      a_{kh}^{0}=1+\frac{5}{2}\frac{\alpha k}{h^{2}},\text{\,\,}b_{kh}^{0}=\frac{2\alpha k}{3h}(1-2h^{-1}),\text{\,\,}c_{kh}^{0}=\frac{\alpha k}{12h}(-1+h^{-1}),\text{\,\,}d_{kh}^{0}=\frac{-2\alpha k}{3h}(1+2h^{-1}),\text{\,\,}e_{kh}^{0}=\frac{\alpha k}{12h}(1+h^{-1}),
       \end{equation*}
       \begin{equation*}
      a_{kh}^{1}=1-\frac{5(1-2\alpha)k}{4h^{2}},\text{\,\,}b_{kh}^{1}=\frac{(1-2\alpha)k}{3h}(-1+2h^{-1}),\text{\,\,}
      c_{kh}^{1}=\frac{(1-2\alpha)k}{24h}(1-h^{-1}),\text{\,\,}d_{kh}^{1}=\frac{(1-2\alpha)k}{3h}(1+2h^{-1}),
       \end{equation*}
        \begin{equation}\label{M2}
       e_{kh}^{1}=\frac{(-1+2\alpha)k}{24h}(1+h^{-1}),
       \end{equation}

      To complete the full description of the desired algorithm, we should develop the second-step of the method. Combining equations $(\ref{6})$, $(\ref{9})$ and $(\ref{17})$, direct calculations give
      \begin{equation*}
       u^{n+1+\alpha}_{j}=u^{n+\frac{1}{2}+\alpha}+\frac{k}{4}\left\{-\left(c\Delta_{0t}^{\beta_{j}^{n+1+\alpha}}u_{j}^{n+1+\alpha}+
       c\Delta_{0t}^{\beta_{j}^{n+\frac{1}{2}+\alpha}}u_{j}^{n+\frac{1}{2}+\alpha}\right)-(u_{x,j}^{n+1+\alpha}+u_{x,j}^{n+\frac{1}{2}+\alpha})+\right.
       \end{equation*}
      \begin{equation}\label{31}
      \left.(u_{2x,j}^{n+1+\alpha}+u_{2x,j}^{n+\frac{1}{2}+\alpha})+(f_{j}^{n+1+\alpha}+f_{j}^{n+\frac{1}{2}+\alpha})\right\}-
      \frac{k}{4}(J_{j}^{\beta_{j}^{n+1+\alpha}}+I_{j}^{\beta_{j}^{n+\frac{1}{2}+\alpha}})+O(k^{3}).
      \end{equation}
      Since for $s=\frac{1}{2},1$, $\gamma_{j}^{n+s+\alpha}=\Gamma(1-\beta_{j}^{n+s+\alpha})^{-1}$ so, $(1-\beta_{j}^{n+s+\alpha})^{-1}\gamma_{j}^{n+s+\alpha}=\Gamma(2-\beta_{j}^{n+s+\alpha})^{-1}$. For $r=\frac{1}{2}$ and $m=0,1,2$, plugging equations $(\ref{10})$, $(\ref{18})$, $(\ref{27})$, $(\ref{28})$ and $(\ref{31})$, straightforward computations result in
      \begin{equation*}
       u^{n+1}_{j}=u^{n+\frac{1}{2}}_{j}+\frac{k}{4}\left\{-\frac{k^{1-\beta_{j}^{n+1+\alpha}}}{\Gamma(2-\beta_{j}^{n+1+\alpha})}
       \underset{l=0}{\overset{n+\frac{1}{2}}\sum}a_{n+1,l+\frac{1}{2}}^{\alpha,\beta_{j}^{n+1+\alpha}}
       \delta_{t}u_{j}^{l}-\frac{k^{1-\beta_{j}^{n+\frac{1}{2}+\alpha}}}{\Gamma(2-\beta_{j}^{n+\frac{1}{2}+\alpha})}
       \underset{l=0}{\overset{n}\sum}a_{n+\frac{1}{2},l+\frac{1}{2}}^{\alpha,\beta_{j}^{n+\frac{1}{2}+\alpha}}\delta_{t}u_{j}^{l}\right.
       \end{equation*}
      \begin{equation*}
       -\left[(1+2\alpha)\delta_{x}^{4}u_{j}^{n+1}-2\alpha\delta_{x}^{4}u_{j}^{n+\frac{1}{2}}+2\alpha\delta_{2x}^{4}u_{j}^{n+1}+
       \left.(1-2\alpha)\delta_{2x}^{4}u_{j}^{n+\frac{1}{2}}+f_{j}^{n+1+\alpha}+f_{j}^{n+\frac{1}{2}+\alpha}+O(k^{2}+h^{4})\right]\right\}
      \end{equation*}
       \begin{equation*}
      -\frac{k}{4}(J_{j}^{\beta_{j}^{n+1+\alpha}}+I_{j}^{\beta_{j}^{n+\frac{1}{2}+\alpha}})+O(k^{2}+k^{3}),
      \end{equation*}
      which is equivalent to
       \begin{equation*}
       u^{n+1}_{j}-\frac{1+4\alpha}{4}k(\delta_{2x}^{4}-\delta_{x}^{4})u^{n+1}_{j}+\frac{k}{4}\left[\underset{l=0}{\overset{n+\frac{1}{2}}\sum}
       \theta_{l+\frac{1}{2},j}^{n+1+\alpha}\delta_{t}u_{j}^{l}+\underset{l=0}{\overset{n}\sum}\theta_{l+\frac{1}{2},j}^{n+\frac{1}{2}+\alpha}
       \delta_{t}u_{j}^{l}\right]=u^{n+\frac{1}{2}}_{j}+\frac{1-4\alpha}{4}k(\delta_{2x}^{4}-\delta_{x}^{4})u^{n+\frac{1}{2}}_{j}
       \end{equation*}
      \begin{equation}\label{32}
       \frac{k}{4}(f_{j}^{n+1+\alpha}+f_{j}^{n+\frac{1}{2}+\alpha})-\frac{k}{4}(J_{j}^{\beta_{j}^{n+1+\alpha}}+
       I_{j}^{\beta_{j}^{n+\frac{1}{2}+\alpha}})+O(k^{2}+k^{3}+kh^{4}),
      \end{equation}
      where $\theta_{l+\frac{1}{2},j}^{n+s+\alpha}$ is defined by $(\ref{35a})$. Omitting the error terms $\frac{k}{4}(J_{j}^{\beta_{j}^{n+1+\alpha}}+I_{j}^{\beta_{j}^{n+\frac{1}{2}+\alpha}})+O(k^{2}+k^{3}+kh^{4})$, equation
      $(\ref{32})$ can be approximated as
      \begin{equation*}
       U^{n+1}_{j}-\frac{1+4\alpha}{4}k(\delta_{2x}^{4}-\delta_{x}^{4})U^{n+1}_{j}+\frac{k}{4}\left[\underset{l=0}{\overset{n+\frac{1}{2}}\sum}
       \theta_{l+\frac{1}{2},j}^{n+1+\alpha}\delta_{t}U_{j}^{l}+\underset{l=0}{\overset{n}\sum}\theta_{l+\frac{1}{2},j}^{n+\frac{1}{2}+\alpha}
       \delta_{t}U_{j}^{l}\right]=U^{n+\frac{1}{2}}_{j}+\frac{1-4\alpha}{4}k(\delta_{2x}^{4}-\delta_{x}^{4})U^{n+\frac{1}{2}}_{j}
       \end{equation*}
      \begin{equation}\label{34}
       \frac{k}{4}(f_{j}^{n+1+\alpha}+f_{j}^{n+\frac{1}{2}+\alpha}),\text{\,\,\,\,\,\,\,\,\,for\,\,\,}j=2,3,...M-2.
      \end{equation}
      We introduce the following "pentadiagonal" matrices $A$ and $A_{2}$ of size $(M-2)\times(M-2)$ defined as
            \begin{equation}\label{M3}
        A=\begin{bmatrix}
                a_{kh} & b_{kh} & c_{kh} & 0 & \cdots & \cdots & 0 \\
                d_{kh} & a_{kh} & a_{kh} & a_{kh} & 0 & & \vdots \\
                e_{kh} & d_{kh} & a_{kh} & b_{kh} & c_{kh} & \ddots & \vdots \\
                0 & \ddots & \ddots & \ddots & \ddots & \ddots & 0\\
                \vdots & \ddots & \ddots & \ddots & \ddots & \ddots & c_{kh}\\
                \vdots &  & \ddots & \ddots & d_{kh} & a_{kh} & b_{kh}\\
                0 & \cdots & \cdots & 0 & e_{kh} & d_{kh} & a_{kh} \\
              \end{bmatrix}
              \text{\,\,\,and\,\,\,}
         A_{2}=\begin{bmatrix}
                a_{kh}^{2} & b_{kh}^{2} & c_{kh}^{2} & 0 & \cdots & \cdots & 0 \\
                d_{kh}^{2} & a_{kh}^{2} & a_{kh}^{2} & a_{kh}^{2} & 0 & & \vdots \\
                e_{kh}^{2} & d_{kh}^{2} & a_{kh}^{2} & b_{kh}^{2} & c_{kh}^{2} & \ddots & \vdots \\
                0 & \ddots & \ddots & \ddots & \ddots & \ddots & 0\\
                \vdots & \ddots & \ddots & \ddots & \ddots & \ddots & c_{kh}^{2}\\
                \vdots &  & \ddots & \ddots & d_{kh}^{2} & a_{kh}^{2} & b_{kh}^{2}\\
                0 & \cdots & \cdots & 0 & e_{kh}^{2} & d_{kh}^{2} & a_{kh}^{2} \\
              \end{bmatrix},
      \end{equation}
      where
       \begin{equation*}
      a_{kh}=1+\frac{5(1+4\alpha)k}{8h^{2}},\text{\,\,}b_{kh}=\frac{(1+4\alpha)k}{6h}(1-2h^{-1}),\text{\,\,}c_{kh}=\frac{(1+4\alpha) k}{48h}(-1+h^{-1}),\text{\,\,}d_{kh}=\frac{-(1+4\alpha)k}{6h}(1+2h^{-1}),
       \end{equation*}
       \begin{equation*}
      e_{kh}=\frac{(1+4\alpha)k}{48h}(1+h^{-1}),\text{\,\,}a_{kh}^{2}=1-\frac{5(1-4\alpha)k}{8h^{2}},\text{\,\,}
      b_{kh}^{2}=\frac{(1-4\alpha)k}{6h}(-1+2h^{-1}),\text{\,\,} c_{kh}^{2}=\frac{(1-4\alpha)k}{48h}(1-h^{-1}),
       \end{equation*}
        \begin{equation}\label{M4}
       d_{kh}^{2}=\frac{(1-4\alpha)k}{6h}(1+2h^{-1}),\text{\,\,}e_{kh}^{2}=\frac{(-1+4\alpha)k}{48h}(1+h^{-1}).
       \end{equation}
       A combination of equations $(\ref{36a})$, $(\ref{35a})$ and $(\ref{34})$ provides the following matrix form
       \begin{equation}\label{35}
       AU^{n+1}+\frac{k}{4(1+2\alpha)}\left[\underset{l=0}{\overset{n+\frac{1}{2}}\sum}
       \theta_{l+\frac{1}{2}}^{n+1+\alpha}*\delta_{t}U_{j}^{l}+\underset{l=0}{\overset{n}\sum}\theta_{l+\frac{1}{2}}^{n+\frac{1}{2}+\alpha}*
       \delta_{t}U_{j}^{l}\right]=A_{2}U^{n+\frac{1}{2}}+\frac{k}{4}(f^{n+1+\alpha}+f^{n+\frac{1}{2}+\alpha}),
       \end{equation}
       where "*" represents the componentwise usual multiplication between two vectors. We recall that the summation index $"l"$ varies in the range
       $l=0,\frac{1}{2},1,\frac{3}{2},2,...,n,n+\frac{1}{2}$. Furthermore, equation $(\ref{34})$ denotes the second-step of the proposed two-step fourth-order modified explicit Euler/Crank-Nicolson numerical scheme in a computed solution of the initial-boundary value problem $(\ref{1})$-$(\ref{3})$.\\

       An assembly of equations $(\ref{35aa})$, $(\ref{36aa})$ and $(\ref{35})$ provides the new algorithm for solving the problem $(\ref{1})$-$(\ref{3})$,
       that is, for $n=1,2,...,N-1$,
       \begin{equation}\label{s1}
       A_{0}U^{\frac{1}{2}}+k\theta_{0}^{\alpha}*\delta_{t}U^{0}=A_{1}U^{0}+\frac{k}{2}f^{\alpha},
      \end{equation}
      \begin{equation}\label{s2}
       AU^{1}+\frac{k}{4(1+2\alpha)}\left[\underset{l=0}{\overset{\frac{1}{2}}\sum}
       \theta_{l+\frac{1}{2}}^{1+\alpha}*\delta_{t}U_{j}^{l}+\theta_{\frac{1}{2}}^{\frac{1}{2}+\alpha}*
       \delta_{t}U_{j}^{0}\right]=A_{2}U^{\frac{1}{2}}+\frac{k}{4}(f^{1+\alpha}+f^{\frac{1}{2}+\alpha}),
       \end{equation}
      \begin{equation}\label{s3}
       A_{0}U^{n+\frac{1}{2}}+\frac{k}{2}\underset{l=0}{\overset{n-\frac{1}{2}}\sum}\theta_{l+\frac{1}{2}}^{n+\alpha}*\delta_{t}U^{l}=
       A_{1}U^{n}+\frac{k}{2}f^{n+\alpha},
       \end{equation}
       \begin{equation}\label{s4}
       AU^{n+1}+\frac{k}{4(1+2\alpha)}\left[\underset{l=0}{\overset{n+\frac{1}{2}}\sum}
       \theta_{l+\frac{1}{2}}^{n+1+\alpha}*\delta_{t}U_{j}^{l}+\underset{l=0}{\overset{n}\sum}\theta_{l+\frac{1}{2}}^{n+\frac{1}{2}+\alpha}*
       \delta_{t}U_{j}^{l}\right]=A_{2}U^{n+\frac{1}{2}}+\frac{k}{4}(f^{n+1+\alpha}+f^{n+\frac{1}{2}+\alpha}),
       \end{equation}
        with initial and boundary conditions
       \begin{equation}\label{s5}
       U_{j}^{0}=u_{j}^{0},\text{\,\,}j=0,1,...,M;\text{\,\,\,}U_{0}^{n}=g_{1}^{n}\text{\,\,\,and\,\,\,}U_{M}^{n}=g_{2}^{n},
       \text{\,\,\,for\,\,\,}n=0,1,...,N,
       \end{equation}
       where the matrices $A_{0}$, $A_{1}$, $A$ and $A_{2}$ are given by relations $(\ref{M1})$-$(\ref{M2})$ and $(\ref{M3})$-$(\ref{M4})$. To start the algorithm we should set $U_{1}^{n}=U_{0}^{n}$ and $U_{M-1}^{n}=U_{M}^{n}$, for $n=0,1,...,N$. However, the terms $U_{1}^{n}$ and $U_{M-1}^{n}$ can be obtained by using any one-step fractional approach such as the method analyzed in \cite{9zzy}.\\

        It's worth noticing that the coefficients of the pentadiagonal matrices $A$ and $A_{i}$, for $i=0,1,2$, come from the entries of a $g$-Toeplitz matrix ($T_{n,g}(f))$ or $g$-circulant matrix ($C_{n,g}(f))$, generated by a Lebesgue integrable function $f$ defined over the domain $(-\pi,\pi)$, where $g$ is a nonnegative integer. Specifically, these matrices are called band Toeplitz matrices which represent a subclass of $g$-Toeplitz structures. For more details about $g$-Toeplitz, $g$-circulant and band Toeplitz matrices, we refer the readers to \cite{nss,enss,eng1,eng2} and references therein. Furthermore, since the matrices $A$ and $A_{0}$ are not symmetric, at time level $n$ or $n+\frac{1}{2}$, each system of linear equations $(\ref{s1})$-$(\ref{s4})$ can be efficiently solved using the Preconditioned Generalized Minimal Residual Algorithm \cite{gmres}.

     \section{Stability analysis and error estimates of the proposed two-step approach $(\ref{s1})$-$(\ref{s5})$}\label{sec3}
    In this section we analyze both unconditional stability and convergence order of the new approach $(\ref{s1})$-$(\ref{s5})$ applied to
    the initial-boundary value problem $(\ref{1})$-$(\ref{3})$. In this study we assume that the function $\beta(x,t):=\overline{\beta}$ is constant
    and the parameter $\alpha$ satisfies $0<\alpha<2^{-1}$. These restrictions play crucial roles in the proof of some intermediate results (namely
    Lemma $\ref{l2}$) and the main result of this paper (Theorem $\ref{t}$). Furthermore, the following Lemmas are important in the analysis of
    stability and error estimates of the proposed formulation $(\ref{s1})$-$(\ref{s5})$ for solving the time variable-order fractional mobile-immobile
    equation $(\ref{1})$ subjects to suitable initial condition $(\ref{2})$ and boundary one $(\ref{3})$.

    \begin{lemma}\label{l1}
   For any $\overline{\beta}\in(0,1)$ and $\alpha\in(0,\frac{1}{2})$. Set $D=[L_{0},L]\times[0,T]$ and consider a function
    $u\in\mathcal{C}^{6,3}(D):=\mathcal{C}^{6,3}_{D}$, thus it holds
   \begin{equation}\label{36}
   \underset{0\leq n\leq N-1}{\max}\|cD_{0t}^{\beta_{j}^{n+s+\alpha}}u^{n+s+\alpha}-c\Delta_{0t}^{\beta_{j}^{n+s+\alpha}}u^{n+s+\alpha}\|_{L^{2}}
   \leq C_{s}k^{2-\overline{\beta}}
   \end{equation}
   where $s=\frac{1}{2},1$, $\beta(x,t)=\overline{\beta}$ is constant, $cD_{0t}^{\lambda}u^{n+s+\alpha}$ and
   $c\Delta_{0t}^{\lambda}u^{i+\frac{1}{2}+\alpha}$, for $0\leq n\leq N-1$, are defined by $(\ref{9})$, $(\ref{10})$ and
   $(\ref{15})$-$(\ref{18})$, and $C_{s}$ for $s\in\{\frac{1}{2},1\}$ are positive constants independent of the mesh size $h$ and time step $k$.
   \end{lemma}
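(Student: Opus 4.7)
The plan is to control the pointwise residual $r^{n+s+\alpha}_j := cD_{0t}^{\overline{\beta}}u^{n+s+\alpha}_j - c\Delta_{0t}^{\overline{\beta}}u^{n+s+\alpha}_j$ uniformly in $j$ and $n$, and then take the finite weighted sum in $j$. By the constructions of (\ref{9})--(\ref{11}) (for $s=\frac{1}{2}$) and (\ref{17})--(\ref{22}) (for $s=1$), $r^{n+s+\alpha}_j$ equals $I_j^{\overline{\beta}}$, respectively $J_j^{\overline{\beta}}$, and each splits naturally into three pieces: a \emph{left-boundary} integral over $[0,t_{1/2}]$ (present only for $s=\frac{1}{2}$), a \emph{right-boundary} integral of length $\alpha k$ adjacent to the endpoint singularity at $t_{n+s+\alpha}$, and a sum of polynomial-interpolation residuals over the interior cells of length $k$. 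A key preliminary observation is that the sequences $a^{\alpha,\overline{\beta}}_{n+1/2,l+1/2}$ and $a^{\alpha,\overline{\beta}}_{n+1,l+1/2}$ defined in (\ref{12})--(\ref{14}) and (\ref{19})--(\ref{21}) are precisely the weights obtained by exactly integrating the Lagrange bases of the quadratic interpolants against $(t_{n+s+\alpha}-\tau)^{-\overline{\beta}}$; in particular the $l=0$ and $l=n$ (resp.\ $l=n+\frac{1}{2}$) terms encode $\delta_t u^0_j$ and $\delta_t u^n_j$ multiplied by the \emph{exact} integrals of $(t_{n+s+\alpha}-\tau)^{-\overline{\beta}}$ over the two boundary cells, which is what makes the decomposition of $I_j^{\overline{\beta}}$ and $J_j^{\overline{\beta}}$ in terms of Taylor-controllable remainders possible.

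Accepting that identification, the two boundary contributions take the form
\[
\int_{I} \frac{u_{t,j}(\tau)-\delta_t u_j^l}{(t_{n+s+\alpha}-\tau)^{\overline{\beta}}}\, d\tau,
\]
with $|I|\leq k$ and $l\in\{0,n\}$. Taylor's theorem gives $|u_{t,j}(\tau)-\delta_t u_j^l|\leq C k\, \||u|\|_{\mathcal{C}^{6,3}_D}$, while an elementary primitive yields $\int_{I} (t_{n+s+\alpha}-\tau)^{-\overline{\beta}}\, d\tau \leq C k^{1-\overline{\beta}}/(1-\overline{\beta})$ (this is where $\overline{\beta}<1$ is used). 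Multiplying gives $O(k^{2-\overline{\beta}})$ per boundary piece. For the interior residuals use the standard quadratic-interpolation bound $|E^{1,u_j}_t(\tau)|,\, |E^{2,u_j}_t(\tau)| \leq C k^2\, \||u|\|_{\mathcal{C}^{6,3}_D}$ on each cell; the full sum is then dominated by
\[
C k^2\, \||u|\|_{\mathcal{C}^{6,3}_D} \int_{t_{1/2}}^{t_{n+1/2}} (t_{n+s+\alpha}-\tau)^{-\overline{\beta}}\, d\tau \leq C' k^2 T^{1-\overline{\beta}},
\]
which is $o(k^{2-\overline{\beta}})$ since $\overline{\beta}>0$. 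Combining the three estimates gives $|r^{n+s+\alpha}_j|\leq C_s k^{2-\overline{\beta}}$ uniformly in $j$ and $n$; squaring, summing with weight $h$ over $j=2,\ldots,M-2$ (total weight $\leq L-L_0$), and taking the maximum over $n$ yields the stated $\|\cdot\|_2$ bound.

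The main obstacle is the bookkeeping in the first paragraph: one must carefully verify that the closed-form coefficients listed in (\ref{12})--(\ref{14}) and (\ref{19})--(\ref{21}) really are the exact integrals of the Lagrange basis against the singular kernel, because only then do the boundary pieces of $I_j^{\overline{\beta}}$ and $J_j^{\overline{\beta}}$ reduce to the convenient subtracted form $u_t-\delta_t u^l_j$ and benefit from the $O(k)$ Taylor bound. A secondary technicality is the degenerate case $n=0$ for $s=\frac{1}{2}$, where the left-boundary cell $[0,t_{1/2}]$ is also adjacent to the singularity at $t_{1/2+\alpha}$; there the singular denominator scales like $(\alpha k)^{-\overline{\beta}}$, but the length of the interval together with the $Ck$ bound on $|u_t-\delta_t u^0|$ still combine to $O(k^{2-\overline{\beta}})$, so this case is handled by the same argument and the constant $C_s$ remains independent of $h$, $k$, $n$ and $j$.
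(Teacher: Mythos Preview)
The paper does not give a self-contained proof but simply defers to \cite{entlfcdr}; your sketch is exactly the standard truncation-error argument for L2-type Caputo discretizations carried out in that reference, so your approach coincides with what the paper invokes. Your flagged ``main obstacle'' is real and is the heart of the matter: as literally displayed, the boundary integrals in $(\ref{11})$ and $(\ref{22})$ carry a raw $u_{t,j}$, which by itself would only yield $O(k^{1-\overline{\beta}})$; it is precisely the identification you describe---that the endpoint weights in $(\ref{12})$--$(\ref{14})$ and $(\ref{19})$--$(\ref{21})$ contain the exact singular integrals over the two boundary cells (e.g.\ $\widetilde{f}^{\alpha,\overline{\beta}}_{n+1,n+1}=\alpha^{1-\overline{\beta}}$ and $\widetilde{\text{\,\.{f}}}^{\alpha,\overline{\beta}}_{n+\frac12,0}$)---that converts those pieces to the subtracted form $u_t-\delta_t u^{l}_{j}$ and restores the missing factor of $k$.
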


   \begin{proof}
    Since $0<\overline{\beta}<1$ is constant and $0<\alpha<\frac{1}{2}$, the proof of this Lemma can be found in \cite{entlfcdr}.
   \end{proof}

   \begin{lemma}\label{l2}\cite{entlfcdr}
   Assume that $0<\overline{\beta}<\frac{2}{3}$ and consider the generalized sequences $\left(a_{n+s,l}^{\alpha,\overline{\beta}}\right)_{2^{-1}
   \leq l\leq n+s}$, where $s=\frac{1}{2},1$, defined by equations $(\ref{12})$-$(\ref{13})$ (for $s=2^{-1}$) and $(\ref{19})$-$(\ref{20})$
   (for $s=1$), thus
   \begin{equation}\label{37}
    a_{n+s,l}^{\alpha,\overline{\beta}}<a_{n+s,l+\frac{1}{2}}^{\alpha,\overline{\beta}},\text{\,\,\,\,for\,\,\,\,}l=\frac{1}{2},1,\frac{3}{2},2,...,n,
    \text{\,\,\,(resp.\,\,\,})n+\frac{1}{2}.
   \end{equation}
   Furthermore,
    \begin{equation}\label{38}
    a_{n+s,l}^{\alpha,\overline{\beta}}>\frac{(2-3\overline{\beta})(1-\overline{\beta})}{2(2-\overline{\beta})}(n+s+\alpha-l)^{-\overline{\beta}},
    \text{\,\,\,\,for\,\,\,\,}l=\frac{1}{2},1,\frac{3}{2},2,...,n,\text{\,\,\,(resp.,\,\,\,}n+\frac{1}{2}).
   \end{equation}
   \end{lemma}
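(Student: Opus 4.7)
The plan is to treat the two inequalities by a direct case analysis on the parity of the index $l$, working from the explicit closed-form expressions $(\ref{14})$ and $(\ref{21})$ for $\widetilde{d}_{n+s,i}$ and $\widetilde{f}_{n+s,i}$. Setting $y_{i} = n+s+\alpha-i$ and $\phi(y) = y^{1-\overline{\beta}}$, I would first rewrite $\widetilde{d}_{n+s,i} = \phi(y_{i}) - \phi(y_{i}-1) = \int_{y_{i}-1}^{y_{i}} \phi'(t)\,dt$, and recognize that $\widetilde{f}_{n+s,i}$ can be put in the form $2\int_{y_{i}-1}^{y_{i}} \phi(t)\,dt - \tfrac{1}{2}[\phi(y_{i}) + 3\phi(y_{i}-1)]$. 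This integral/quadrature viewpoint drives every estimate that follows.

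For the monotonicity inequality $(\ref{37})$, I would unfold the piecewise definitions $(\ref{13})$ and $(\ref{20})$: in the interior, the claim reduces to $\widetilde{d}_{n+s,l} < 2\widetilde{f}_{n+s,l}$ when $l$ is an integer, and to $\widetilde{f}_{n+s,l-1/2} < \widetilde{d}_{n+s,l+1/2} - \widetilde{f}_{n+s,l+1/2}$ when $l$ is a half-integer. Each sub-case is a sign statement for a specific linear combination of $\phi$ and its integral on one or two neighbouring unit intervals. I would control the sign by Taylor-expanding $\phi$ about the midpoint of the relevant interval; the even-order terms cancel by symmetry, and the leading non-vanishing contribution is a fixed multiple of a derivative of $\phi$ whose sign is unambiguous. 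The hypothesis $\overline{\beta} < 2/3$ is the algebraic threshold that keeps the Taylor remainder of the correct sign once higher-order corrections are included.

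For the lower bound $(\ref{38})$, I would express $a_{n+s,l}^{\alpha,\overline{\beta}}$ as an integral of $\phi'(t) = (1-\overline{\beta})t^{-\overline{\beta}}$ weighted by a non-negative quadrature residual over $[y_{l}-1, y_{l}]$. Since $t \mapsto t^{-\overline{\beta}}$ is decreasing, I would bound the integrand from below by its value at the right endpoint $t = y_{l} = n+s+\alpha-l$ and then carry out the remaining integration of the weight explicitly. The advertised prefactor $\frac{(2-3\overline{\beta})(1-\overline{\beta})}{2(2-\overline{\beta})}$ emerges as the product of the factor $(1-\overline{\beta})$ coming from $\phi'$ with the rational coefficient $\frac{2-3\overline{\beta}}{2(2-\overline{\beta})}$ obtained from the normalization of $\widetilde{f}$; positivity of this coefficient is precisely the condition $\overline{\beta} < 2/3$.

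The main obstacle I expect is the half-integer case of the monotonicity statement, where the leading quadrature error nearly vanishes and one must retain the next-order Taylor term with a quantitative remainder bound uniform in $y \geq 1 + \alpha$; the restriction $\overline{\beta} < 2/3$ is the narrow numerical window in which this residual has the favourable sign. Finally, the endpoint indices $l = n$ for $s = 1/2$ and $l = n+1/2$ for $s = 1$ must be handled separately using the boundary values $\widetilde{f}_{n+s,n+s-1/2} = \alpha^{1-\overline{\beta}}$ recorded in $(\ref{12})$ and $(\ref{19})$, after which the interior sign analysis extends to cover them.
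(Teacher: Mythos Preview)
The paper does not prove Lemma~\ref{l2} at all: it is imported verbatim from \cite{entlfcdr}, with no argument beyond the citation. So there is no ``paper's own proof'' to compare against; your proposal is already more than what the present text offers.

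As a sketch your strategy is sound in outline. The integral identifications $\widetilde{d}_{n+s,i}=\phi(y_i)-\phi(y_i-1)$ and $\widetilde{f}_{n+s,i}=2\int_{y_i-1}^{y_i}\phi(t)\,dt-\tfrac12[\phi(y_i)+3\phi(y_i-1)]$ are exactly right, and reducing both the monotonicity and the lower bound to sign/quadrature questions for $\phi(t)=t^{1-\overline\beta}$ is the natural route (and is, in fact, how the cited reference proceeds). A few points deserve tightening. First, your interior case breakdown is slightly off: from $(\ref{13})$ and $(\ref{20})$ the comparison $a_{n+s,l}<a_{n+s,l+\frac12}$ for integer $l$ reads $\widetilde f_{n+s,l-1}<\widetilde d_{n+s,l}-\widetilde f_{n+s,l}$, not $\widetilde d<2\widetilde f$; check the index shift when you translate $a_{n+s,m}$ back to the $\widetilde d,\widetilde f$ building blocks. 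Second, the half-integer step is indeed the delicate one, but the mechanism is concavity/convexity of $\phi'$ and $\phi''$ rather than a bare Taylor remainder bound; a midpoint-versus-trapezoid quadrature error argument for $t^{-\overline\beta}$ is cleaner and makes the role of $\overline\beta<\tfrac23$ transparent. Third, your endpoint bookkeeping has a small slip: for $s=1$ the boundary value is $\widetilde f_{n+1,n+1}=\alpha^{1-\overline\beta}$, not $\widetilde f_{n+1,n+\frac12}$. None of these are fatal, but they are the places where a referee (or the original proof in \cite{entlfcdr}) would want explicit lines rather than a verbal appeal to ``the leading non-vanishing contribution.''
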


   \begin{lemma}\label{l3}
    Let $(a_{n+s,l}^{\alpha,\overline{\beta}})_{l\leq n+s}$ be the generalized sequences defined by relations $(\ref{12})$-$(\ref{13})$
    and $(\ref{19})$-$(\ref{20})$. For every mesh function $u(\cdot,\cdot)$ defined on the grid space $\mathcal{U}_{hk}=\{u_{j}^{n},
    \text{\,\,}0\leq j\leq M\text{\,\,\,and\,\,\,}n=0,1,2,...,N\}$, setting $W_{j}^{\overline{\beta},l}=\underset{r=0}{\overset{l}\sum}
    a_{n+s,r+\frac{1}{2}}^{\alpha,\overline{\beta}}\delta_{t}u_{j}^{r}$, the following estimates hold for $s=\frac{1}{2},1$,
    \begin{equation*}
    u_{j}^{n+s}(c\Delta_{0t}^{\overline{\beta}}u_{j}^{n+s+\alpha})=\frac{1}{2}c\Delta_{0t}^{\overline{\beta}}(u_{j}^{n+s+\alpha})^{2}+
    \frac{k^{2-\overline{\beta}}}{4\Gamma(2-\overline{\beta})}\left\{(a_{n+s,n+s}^{\alpha,\overline{\beta}})^{-1}
    \left(W_{j}^{\overline{\beta},n+s-\frac{1}{2}}\right)^{2}+\right.
   \end{equation*}
    \begin{equation}\label{39a}
    \left.\left[a_{n+s,\frac{1}{2}}^{\alpha,\overline{\beta}}-(a_{n+s,1}^{\alpha,\overline{\beta}})^{-1}
    (a_{n+s,\frac{1}{2}}^{\alpha,\overline{\beta}})^{2}\right]
    \left(\delta_{t}u_{j}^{0}\right)^{2}+\underset{l=\frac{1}{2}}{\overset{n+s-1}\sum}\left[(a_{n+s,l+\frac{1}{2}}^{\alpha,\overline{\beta}})^{-1}
    -(a_{n+s,l+1}^{\alpha,\overline{\beta}})^{-1}\right]\left(W_{j}^{\overline{\beta},l}\right)^{2}\right\},
   \end{equation}
     Furthermore,
    \begin{equation}\label{39}
    u_{j}^{n+s}(c\Delta_{0t}^{\overline{\beta}}u_{j}^{n+s+\alpha})\geq\frac{1}{2}c\Delta_{0t}^{\overline{\beta}}(u_{j}^{n+s+\alpha})^{2}.
   \end{equation}
   \end{lemma}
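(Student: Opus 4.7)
The plan is to establish identity \pref{39a} through a two-stage summation-by-parts argument, and then deduce the inequality \pref{39} as an immediate consequence of the positivity and monotonicity provided by Lemma \ref{l2}. Throughout I will abbreviate $a_l := a_{n+s,l+1/2}^{\alpha,\overline{\beta}}$, $D^l := \delta_t u_j^l$, and $W^l := W_j^{\overline{\beta},l}$, so that $W^l-W^{l-1/2}=a_l D^l$ (with the convention $W^{-1/2}=0$), and $l$ ranges over half-integers from $0$ to $n+s-\tfrac12$.

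First I would apply the elementary identity $u_j^{n+s} D^l = \tfrac12 \delta_t(u_j^l)^2 + \tfrac12\bigl(2u_j^{n+s}-u_j^{l+1/2}-u_j^l\bigr)D^l$, obtained from $\delta_t(u_j^l)^2=(u_j^{l+1/2}+u_j^l)D^l$, inside the definition \pref{10} of $c\Delta_{0t}^{\overline{\beta}}$. This immediately isolates $\tfrac12 c\Delta_{0t}^{\overline{\beta}}(u_j^{n+s+\alpha})^2$ on the right and leaves a remainder $R$ built from the offsets $2u_j^{n+s}-u_j^{l+1/2}-u_j^l$. Telescoping $u_j^{n+s}-u_j^l=\tfrac{k}{2}\sum_{r=l}^{n+s-1/2}D^r$ rewrites each offset as $\tfrac{k}{2}D^l + k\sum_{r=l+1/2}^{n+s-1/2}D^r$, so $R$ splits naturally into a diagonal piece $\frac{k^{2-\overline{\beta}}}{4\Gamma(2-\overline{\beta})}\sum_l a_l(D^l)^2$ and a cross piece $\frac{k^{2-\overline{\beta}}}{2\Gamma(2-\overline{\beta})}\sum_{l<r}a_l D^l D^r$.

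The heart of the argument is Abel summation on the cross piece. Swapping the order of the double sum yields $\sum_{l<r}a_l D^l D^r = \sum_{r\ge 1/2}W^{r-1/2}D^r$; substituting $D^r = (W^r-W^{r-1/2})/a_r$ and expanding via $2xy = x^2+y^2-(x-y)^2$ with $x=W^r$, $y=W^{r-1/2}$ produces $\tfrac12\sum_{r\ge 1/2}[(W^r)^2-(W^{r-1/2})^2]/a_r - \tfrac12\sum_{r\ge 1/2}a_r(D^r)^2$. The critical cancellation is that the last sum eliminates the $r\ge 1/2$ portion of the diagonal piece exactly, leaving only the boundary contribution $a_0(D^0)^2$. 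A second Abel summation on the remaining telescoping sum extracts the endpoint weights $(W^{n+s-1/2})^2/a_{n+s-1/2}$ and $-(W^0)^2/a_{1/2}$ together with the interior terms $\sum_{r=1/2}^{n+s-1}(W^r)^2(1/a_r-1/a_{r+1/2})$. Using $W^0 = a_0 D^0$ combines $-(W^0)^2/a_{1/2}$ with the surviving $a_0(D^0)^2$ into $(a_0-a_0^2/a_{1/2})(D^0)^2$, and reverting to the original notation delivers exactly \pref{39a}.

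The inequality \pref{39} then follows from Lemma \ref{l2}: for $\overline{\beta}\in(0,2/3)$ the sequence $(a_{n+s,l}^{\alpha,\overline{\beta}})_l$ is strictly positive and strictly increasing in $l$, so $1/a_{n+s,n+s}^{\alpha,\overline{\beta}}>0$, the bracket $a_{n+s,1/2}^{\alpha,\overline{\beta}}-(a_{n+s,1}^{\alpha,\overline{\beta}})^{-1}(a_{n+s,1/2}^{\alpha,\overline{\beta}})^2$ is non-negative, and each $1/a_{n+s,l+1/2}^{\alpha,\overline{\beta}}-1/a_{n+s,l+1}^{\alpha,\overline{\beta}}$ is non-negative. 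Hence every term on the right-hand side of \pref{39a} is a non-negative multiple of a square, so $R\ge 0$. The main technical hurdle I anticipate is the bookkeeping of the half-integer summation ranges: the upper index of the $\delta_t$-sum differs between $s=\tfrac12$ and $s=1$, and at the lower endpoint the convention $W^{-1/2}=0$ must be coordinated carefully with the surviving $a_0(D^0)^2$ term that emerges after the main cancellation. Once those ranges are pinned down, the only creative ingredient is the two-stage Abel rearrangement combined with the identity $2xy=x^2+y^2-(x-y)^2$; the remaining manipulations reduce to routine algebra.
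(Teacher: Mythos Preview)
Your proposal is correct. The paper itself does not give a self-contained proof of \pref{39a}: it merely says that the identity follows by substituting $\overline{\beta}$ for $\lambda$ in the proof of a lemma in the cited reference, and then derives \pref{39} from the monotonicity $a_{n+s,l}^{\alpha,\overline{\beta}}<a_{n+s,l+\frac{1}{2}}^{\alpha,\overline{\beta}}$ exactly as you do. Your two-stage Abel rearrangement is precisely the argument one expects behind that citation, so your approach coincides with the paper's, with the added value that you actually carry it out; the bookkeeping of the half-integer ranges and the boundary term $a_{0}(D^{0})^{2}$ that you flag as the only delicate point is handled correctly in your outline.
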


   \begin{proof}
   The proof of $(\ref{39a})$ is obtained by replacing $\lambda$ with $\overline{\beta}$ in the proof of Lemma $3.3$ established in \cite{entlfcdr}.
   The proof of $(\ref{39})$ is obvious since $a_{n+s,l}^{\alpha,\overline{\beta}}<a_{n+s,l+\frac{1}{2}}^{\alpha,\overline{\beta}}$, for
   $s=\frac{1}{2},1$, and $l=\frac{1}{2},1,\frac{3}{2},2,...,n$ (resp., $n+\frac{1}{2}$). In addition, it is easy to see that
   $a_{n+s,\frac{1}{2}}^{\alpha,\overline{\beta}}-(a_{n+s,1}^{\alpha,\overline{\beta}})^{-1}(a_{n+s,\frac{1}{2}}^{\alpha,\overline{\beta}})^{2}\geq0$.
   \end{proof}

   \begin{lemma}\label{l4}
     Given $(a_{n+s,l}^{\alpha,\overline{\beta}})_{l}$ be the generalized sequences defined by equations $(\ref{12})$-$(\ref{13})$
    and $(\ref{19})$-$(\ref{20})$, for any grid function $v(\cdot,\cdot)$ defined on the grid space $\mathcal{U}_{hk}$, it holds
    \begin{equation*}
    \underset{l=l_{0}}{\overset{m}\sum}a_{n+s,l+\frac{1}{2}}^{\alpha,\overline{\beta}}[(v_{j}^{l+\frac{1}{2}})^{2}-(v_{j}^{l})^{2}]=
    a_{n+s,m+\frac{1}{2}}^{\alpha,\overline{\beta}}(v_{j}^{m+\frac{1}{2}})^{2}-a_{n+s,l_{0}+\frac{1}{2}}^{\alpha,\overline{\beta}}(v_{j}^{l_{0}})^{2}+
   \end{equation*}
   \begin{equation}\label{42}
    \underset{l=l_{0}}{\overset{m-\frac{1}{2}}\sum}[a_{n+s,l+\frac{1}{2}}^{\alpha,\overline{\beta}}
     -a_{n+s,l+1}^{\alpha,\overline{\beta}}](v_{j}^{l+\frac{1}{2}})^{2},
   \end{equation}
    for $m\in\{n,n+\frac{1}{2}\}$ and $l=l_{0},l_{0}+\frac{1}{2},l_{0}+1,l_{0}+\frac{3}{2},...,m$, where $l_{0}$ is a nonnegative integer satisfying
    $l_{0}\leq m$.
   \end{lemma}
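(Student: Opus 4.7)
The plan is to prove the identity by a straightforward summation-by-parts (Abel transformation), treating it as a purely algebraic manipulation, since no monotonicity or positivity of the coefficients is being asserted. To streamline the bookkeeping, I will introduce the shorthands $a_l := a_{n+s,l+\frac{1}{2}}^{\alpha,\overline{\beta}}$ and $V_l := (v_j^l)^2$, so that the left-hand side reads $S = \sum_{l=l_0}^{m} a_l (V_{l+\frac{1}{2}} - V_l)$, with $l$ running over the half-integer lattice $\{l_0, l_0+\tfrac12, l_0+1, \ldots, m\}$.

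First I would split $S$ into the two pieces $S_1 = \sum_{l=l_0}^{m} a_l V_{l+\frac{1}{2}}$ and $S_2 = \sum_{l=l_0}^{m} a_l V_l$. In $S_1$ the $V$-index runs over $\{l_0+\tfrac12, l_0+1,\ldots, m+\tfrac12\}$, while in $S_2$ it runs over $\{l_0, l_0+\tfrac12, \ldots, m\}$. Peeling off the top term $a_m V_{m+\frac{1}{2}}$ from $S_1$ and the bottom term $a_{l_0} V_{l_0}$ from $S_2$, the remaining $V$-indices in both sums coincide, so after reindexing $S_1$ by $l \mapsto l + \tfrac12$ one obtains
\begin{equation*}
S = a_m V_{m+\frac{1}{2}} - a_{l_0} V_{l_0} + \sum_{l=l_0}^{m-\frac{1}{2}} (a_l - a_{l+\frac{1}{2}}) V_{l+\frac{1}{2}}.
\end{equation*}
Reinstating the original notation $a_l = a_{n+s,l+\frac{1}{2}}^{\alpha,\overline{\beta}}$ and $V_l = (v_j^l)^2$ gives precisely the identity $(\ref{42})$, since $a_l - a_{l+\frac{1}{2}}$ then becomes $a_{n+s,l+\frac{1}{2}}^{\alpha,\overline{\beta}} - a_{n+s,l+1}^{\alpha,\overline{\beta}}$.

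No analytic estimate is required, so the only real difficulty is index bookkeeping: the indexing convention of the paper uses both half-integer superscripts on $v_j^l$ and half-integer subscripts on $a_{n+s,l+\frac{1}{2}}^{\alpha,\overline{\beta}}$, and the upper limit $m$ is allowed to be either $n$ or $n+\tfrac12$. I would verify the two boundary values $m = n$ and $m = n+\tfrac12$ separately, each time checking that the boundary terms at $l=l_0$ and at the top of the range pair up correctly with the definitions $(\ref{12})$--$(\ref{13})$ and $(\ref{19})$--$(\ref{20})$ of the coefficients. Once the half-integer lattice is treated consistently, the telescoping is clean and the identity follows at once.
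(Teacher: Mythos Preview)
Your proposal is correct and is essentially the same approach as the paper's own proof, which simply states ``Expanding the left side of this equality and rearranging terms to obtain the result.'' Your Abel-summation argument is exactly that expansion made explicit; note also that the identity is purely algebraic in the coefficients $a_{n+s,l+\frac{1}{2}}^{\alpha,\overline{\beta}}$, so the separate verification against definitions $(\ref{12})$--$(\ref{13})$ and $(\ref{19})$--$(\ref{20})$ you mention at the end is unnecessary.
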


    \begin{proof}
     Expanding the left side of this equality and rearranging terms to obtain the result.
    \end{proof}

    \begin{lemma}\label{l5}
      Consider the following linear operators
     \begin{equation}\label{44}
      L_{h}u_{j}^{q}=(\delta_{2x}^{4}-\delta_{x}^{4})u_{j}^{q}\text{\,\,\,and\,\,\,}Lu_{j}^{q}=[u_{2x}-u_{x}]|_{(x_{j},t_{q})},
    \end{equation}
    for $j=2,3,...,M-2$, where $q$ is any nonnegative rational number. If $u_{0}^{q}=v_{0}^{q}=0$, $u_{1}^{q}=v_{1}^{q}=0$, $u_{M-1}^{q}=v_{M-1}^{q}=0$
    and $u_{M}^{q}=v_{M}^{q}=0$, so it holds
   \begin{equation}\label{45}
    |\left(L_{h}u_{j}^{q},v^{d}\right)|\leq \frac{4}{3}\|\delta_{x}v^{d}\|_{2}[\|\delta_{x}u^{q}\|_{2}+\|u^{q}\|_{2}]\text{\,\,\,and\,\,\,}
    |\left(L_{h}u_{j}^{q},v^{d}\right)|\leq C_{p}\|\delta_{x}v^{d}\|_{2}\|\delta_{x}u^{q}\|_{2},
   \end{equation}
   where $C_{p}$ is a positive constant independent of the time step $k$ and space step $h$.
   \end{lemma}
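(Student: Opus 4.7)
The plan is to reduce $L_h = \delta_{2x}^{4} - \delta_x^{4}$ to simpler discrete first- and second-order building blocks, apply discrete summation by parts (SBP), and close the estimate with Cauchy--Schwarz. I would begin by establishing the two identities
\[
\delta_{2x}^{4} = \delta_x^{2} - \frac{h^{2}}{12}\bigl(\delta_x^{2}\bigr)^{2}, \qquad
\delta_x^{4}u_j = \frac{1}{12}\bigl[7(\delta_x u_{j+\frac{1}{2}} + \delta_x u_{j-\frac{1}{2}}) - (\delta_x u_{j+\frac{3}{2}} + \delta_x u_{j-\frac{3}{2}})\bigr],
\]
which follow by rearranging the five-point stencils in $(\ref{23a})$ and $(\ref{24a})$ and invoking the definition of $\delta_x$ in $(\ref{2a})$.

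For the second-derivative piece $(\delta_{2x}^{4}u, v)$ I would apply SBP twice. The four-point zero data $u_0 = u_1 = u_{M-1} = u_M = 0$ (and the same for $v$) makes the SBP $(\delta_x^{2}u, v) = -(\delta_x u, \delta_x v)$ clean, and a second SBP on $((\delta_x^{2})^{2}u, v)$ produces the interior bilinear form $(\delta_x^{2}u, \delta_x^{2}v)$ together with surviving boundary factors involving $\delta_x^{2}u$ at $j = 1, M-1$ and $v$ at $j = 2, M-2$. These factors are absorbed via the telescoping identities $v_2 = h\delta_x v_{3/2}$ (using $v_1 = 0$) and $v_{M-2} = -h\delta_x v_{M-3/2}$, turning each boundary factor into a single term of $\|\delta_x u\|_2\|\delta_x v\|_2$. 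Applying Cauchy--Schwarz to the resulting interior form together with the discrete inverse-type estimate $\|\delta_x^{2}u\|_2 \leq \tfrac{2}{h}\|\delta_x u\|_2$ (obtained by squaring $\delta_x^{2}u_j = (\delta_x u_{j+\frac12} - \delta_x u_{j-\frac12})/h$ and summing) then yields $|(\delta_{2x}^{4}u, v)| \leq \tfrac{4}{3}\|\delta_x u\|_2 \|\delta_x v\|_2$, the sharp constant coming from $1 + \tfrac{h^{2}}{12}\cdot\tfrac{4}{h^{2}} = \tfrac{4}{3}$.

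For the first-derivative piece $(\delta_x^{4}u, v)$, I would use that the stencil of $\delta_x^{4}$ is antisymmetric about its central point, so under the stated boundary conditions the operator is skew-adjoint: $(\delta_x^{4}u, v) = -(u, \delta_x^{4}v)$. Cauchy--Schwarz then reduces the task to bounding $\|\delta_x^{4}v\|_2$. Applying Jensen's inequality to the representation above with the nonnegative weights $(1,7,7,1)/12$ (whose sum is $\tfrac{4}{3}$) gives
\[
|\delta_x^{4}v_j|^{2} \leq \frac{1}{9}\bigl[|\delta_x v_{j+\frac{3}{2}}|^{2} + 7|\delta_x v_{j+\frac{1}{2}}|^{2} + 7|\delta_x v_{j-\frac{1}{2}}|^{2} + |\delta_x v_{j-\frac{3}{2}}|^{2}\bigr];
\]
summing in $j$ and bounding each shifted sum by $\|\delta_x v\|_2^{2}$ yields $\|\delta_x^{4}v\|_2 \leq \tfrac{4}{3}\|\delta_x v\|_2$, so $|(\delta_x^{4}u, v)| \leq \tfrac{4}{3}\|u\|_2\|\delta_x v\|_2$. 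Adding the two bounds via the triangle inequality produces the first estimate in $(\ref{45})$. The second estimate then follows from the discrete Poincar\'e inequality $\|u\|_2 \leq C_0\|\delta_x u\|_2$ (valid because $u$ vanishes at both endpoints), taking $C_p = \tfrac{4}{3}(1 + C_0)$.

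The main obstacle I expect to confront is the careful accounting of the boundary contributions in the double SBP for $(\delta_{2x}^{4}u, v)$: a naive computation generates terms of nominal size $O(1/h)$ involving $\delta_x^{2}u$ at $j = 1, M-1$ paired with interior values of $v$, and verifying that the four-point zero BC (and not merely the two-point condition $u_1 = u_{M-1} = 0$) are exactly what is needed to reabsorb these contributions into the $\|\delta_x u\|_2\|\delta_x v\|_2$ bound requires a delicate, endpoint-by-endpoint check. Once that cancellation is pinned down, all the subsequent estimates are routine applications of Cauchy--Schwarz and the inverse-type inequality.
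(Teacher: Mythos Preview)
Your route is genuinely different from the paper's. The paper never uses the operator factorization $\delta_{2x}^{4}=\delta_x^{2}-\tfrac{h^{2}}{12}(\delta_x^{2})^{2}$ or the inverse-type bound $\|\delta_x^{2}u\|_{2}\le\tfrac{2}{h}\|\delta_x u\|_{2}$; instead it rewrites each stencil directly as a combination of first differences (your second displayed identity for $\delta_x^{4}$, and an analogous one for $\delta_{2x}^{4}$ with weights $-1,14,-1$ on successive differences of $\delta_x u$), applies a single Abel summation to each of those pieces, and closes with Cauchy--Schwarz term by term. For $\delta_x^{4}$ the paper does not invoke skew-adjointness or Jensen; it shifts the difference onto $v$ by the same Abel-summation device and bounds the result by $\tfrac{16}{12}\|u\|_{2}\|\delta_x v\|_{2}$ directly. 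Your approach is more structural and arguably cleaner conceptually; the paper's is more hands-on but avoids the double SBP and the inverse estimate entirely.

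One point to watch: your bookkeeping for the constant $\tfrac{4}{3}$ in the $\delta_{2x}^{4}$ piece is not self-consistent. You say the boundary contributions from the double SBP survive as ``a single term of $\|\delta_x u\|_{2}\|\delta_x v\|_{2}$'' each, yet you then quote $1+\tfrac{h^{2}}{12}\cdot\tfrac{4}{h^{2}}=\tfrac{4}{3}$ as the final constant, which accounts only for the interior bilinear forms. If those boundary terms really do persist (and with only $u_0=u_1=u_{M-1}=u_M=0$ they need not cancel), your argument delivers a constant strictly larger than $\tfrac{4}{3}$, so the first inequality in \eqref{45} as stated would not follow. This does not affect the second inequality (any absolute constant feeds into $C_p$), and since only the second inequality is actually invoked downstream in the proof of Theorem~\ref{t}, the gap is harmless for the paper's purposes; but if you want the explicit $\tfrac{4}{3}$, the paper's single-SBP route on the $(-1,14,-1)$ decomposition reaches it without boundary residues.
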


   \begin{proof}
    In equations $(\ref{23a})$ and $(\ref{24a})$, replacing $n+s+\alpha$ by $q$ to obtain
    \begin{equation*}
       \delta_{2x}^{4}u_{j}^{q}=\frac{1}{12h^{2}}\left[-u_{j+2}^{q}+16u_{j+1}^{q}-30u_{j}^{q}+16u_{j-1}^{q}-u_{j-2}^{q}\right],
       \end{equation*}
        \begin{equation*}
       \delta_{x}^{4}u_{j}^{q}=\frac{1}{12h}\left[-u_{j+2}^{q}+8u_{j+1}^{q}-8u_{j-1}^{q}+u_{j-2}^{q}\right].
       \end{equation*}
     Since $\delta_{x}u_{j}^{q}=\frac{u_{j}^{q}-u_{j-1}^{q}}{h}$, direct calculations provide
     \begin{equation}\label{n1}
       \delta_{2x}^{4}u_{j}^{q}=\frac{1}{12h^{2}}\left[-(\delta_{x}u_{j-\frac{1}{2}}^{q}-\delta_{x}u_{j-\frac{3}{2}}^{q})+
       14(\delta_{x}u_{j+\frac{1}{2}}^{q}-\delta_{x}u_{j-\frac{1}{2}}^{q})-(\delta_{x}u_{j+\frac{3}{2}}^{q}-\delta_{x}u_{j+\frac{1}{2}}^{q})\right],
       \end{equation}
        \begin{equation}\label{n2}
        \delta_{x}^{4}u_{j}^{q}=\frac{1}{12h}\left[-\delta_{x}u_{j-\frac{3}{2}}^{q}+
       7\delta_{x}u_{j-\frac{1}{2}}^{q}+7\delta_{x}u_{j+\frac{1}{2}}^{q}-\delta_{x}u_{j+\frac{3}{2}}^{q}\right].
       \end{equation}
       Performing straightforward computations, it is not difficult to show that
       \begin{equation*}
    \frac{-1}{h}\underset{j=2}{\overset{M-2}\sum}(\delta_{x}u_{j-\frac{1}{2}}^{q}-\delta_{x}u_{j-\frac{3}{2}}^{q})v_{j}^{p}=
    \frac{1}{h}(\delta_{x}u_{\frac{1}{2}}^{q}v_{2}^{p}-\delta_{x}u_{M-\frac{5}{2}}^{q}v_{M-2}^{p})+\underset{j=2}{\overset{M-3}\sum}
    \delta_{x}u_{j-\frac{1}{2}}^{q}\delta_{x}v_{j+\frac{1}{2}}^{q}.
    \end{equation*}
      Utilizing assumption $v_{1}^{q}=0$ and $v_{M-1}^{q}=0$, this becomes
     \begin{equation}\label{n3}
     \frac{-1}{h}\underset{j=2}{\overset{M-2}\sum}(\delta_{x}u_{j-\frac{1}{2}}^{q}-\delta_{x}u_{j-\frac{3}{2}}^{q})v_{j}^{p}=
     \underset{j=1}{\overset{M-2}\sum}\delta_{x}u_{j-\frac{1}{2}}^{q}\delta_{x}v_{j+\frac{1}{2}}^{q}.
     \end{equation}
     \begin{equation}\label{n4}
     \frac{-1}{h}\underset{j=2}{\overset{M-2}\sum}(\delta_{x}u_{j+\frac{3}{2}}^{q}-\delta_{x}u_{j+\frac{1}{2}}^{q})v_{j}^{p}=
    \frac{1}{h}(\delta_{x}u_{\frac{5}{2}}^{q}v_{2}^{p}-\delta_{x}u_{M-\frac{1}{2}}^{q}v_{M-2}^{p})+\underset{j=2}{\overset{M-3}\sum}
    \delta_{x}u_{j+\frac{3}{2}}^{q}\delta_{x}v_{j+\frac{1}{2}}^{q}=\underset{j=1}{\overset{M-2}\sum}\delta_{x}u_{j+\frac{3}{2}}^{q}
    \delta_{x}v_{j+\frac{1}{2}}^{q}.
     \end{equation}
    The last equality follows from the assumption $v_{1}^{q}=0$ and $v_{M-1}^{q}=0$. Analogously, one easily shows that
    \begin{equation}\label{n5}
     \frac{14}{h}\underset{j=2}{\overset{M-2}\sum}(\delta_{x}u_{j+\frac{1}{2}}^{q}-\delta_{x}u_{j-\frac{1}{2}}^{q})v_{j}^{p}=
     -14\underset{j=1}{\overset{M-2}\sum}\delta_{x}u_{j+\frac{1}{2}}^{q}\delta_{x}v_{j+\frac{1}{2}}^{q}.
     \end{equation}
     Plugging equations $(\ref{n1})$ and $(\ref{n3})$-$(\ref{n5})$ yields
     \begin{equation*}
    \underset{j=2}{\overset{M-2}\sum}(\delta_{2x}^{4}u_{j}^{q})v_{j}^{q}=\frac{1}{12}\left[\underset{j=1}{\overset{M-2}\sum}\delta_{x}u_{j-\frac{1}{2}}^{q}
    \delta_{x}v_{j+\frac{1}{2}}^{q}+\underset{j=1}{\overset{M-2}\sum}\delta_{x}u_{j+\frac{3}{2}}^{q}\delta_{x}v_{j+\frac{1}{2}}^{q}-14
    \underset{j=1}{\overset{M-2}\sum}\delta_{x}u_{j+\frac{1}{2}}^{q}\delta_{x}v_{j+\frac{1}{2}}^{q}\right].
    \end{equation*}
    Multiplying both sides of this equation by $h$, applying the H\"{o}lder and Cauchy-Schwarz inequalities, using the definition of $L^{2}$-norm
    and the scalar product $\left(\cdot,\cdot\right)$, this results in
    \begin{equation*}
    \left(\delta_{2x}^{4}u^{q},v^{q}\right)\leq\frac{1}{12}\left[\left(h^{2}\underset{j=1}{\overset{M-2}\sum}
    \left(\delta_{x}u_{j-\frac{1}{2}}^{q}\right)^{2}\underset{j=1}{\overset{M-2}\sum}
    \left(\delta_{x}v_{j+\frac{1}{2}}^{q}\right)^{2}\right)^{\frac{1}{2}}+\left(h^{2}\underset{j=1}{\overset{M-2}\sum}
    \left(\delta_{x}u_{j+\frac{3}{2}}^{q}\right)^{2}\underset{j=1}{\overset{M-2}\sum}
    \left(\delta_{x}v_{j+\frac{1}{2}}^{q}\right)^{2}\right)^{\frac{1}{2}}+\right.
    \end{equation*}
    \begin{equation}\label{n6}
     \left.14\left(h^{2}\underset{j=1}{\overset{M-2}\sum}\left(\delta_{x}u_{j+\frac{1}{2}}^{q}\right)^{2}\underset{j=1}{\overset{M-2}\sum}
     \left(\delta_{x}v_{j+\frac{1}{2}}^{q}\right)^{2}\right)^{\frac{1}{2}}\right]\leq\frac{16}{12}\|\delta_{x}u^{q}\|_{2}\|\delta_{x}v^{q}\|_{2}.
     \end{equation}
     In a similar manner, one easily proves that
     \begin{equation*}
     \underset{j=2}{\overset{M-2}\sum}\delta_{x}u_{j-\frac{3}{2}}^{q}v_{j}^{p}=
     -\underset{j=1}{\overset{M-3}\sum}u_{j}^{q}\delta_{x}v_{j+\frac{3}{2}}^{q},\text{\,\,\,\,\,\,\,}\underset{j=2}{\overset{M-2}\sum}
     \delta_{x}u_{j+\frac{3}{2}}^{q}v_{j}^{p}=-\underset{j=3}{\overset{M-2}\sum}u_{j}^{q}\delta_{x}v_{j-\frac{3}{2}}^{q},
     \end{equation*}
     \begin{equation}\label{n7}
     -7\underset{j=2}{\overset{M-2}\sum}\delta_{x}u_{j-\frac{1}{2}}^{q}v_{j}^{p}=
     7\underset{j=1}{\overset{M-2}\sum}u_{j}^{q}\delta_{x}v_{j+\frac{1}{2}}^{q}\text{\,\,\,and\,\,\,}-7\underset{j=2}{\overset{M-2}\sum}
     \delta_{x}u_{j+\frac{1}{2}}^{q}v_{j}^{p}=7\underset{j=2}{\overset{M-2}\sum}u_{j}^{q}\delta_{x}v_{j-\frac{1}{2}}^{q}.
     \end{equation}
     Combining equations $(\ref{n2})$ and $(\ref{n7})$, it is not hard to observe that
     \begin{equation*}
    \underset{j=2}{\overset{M-2}\sum}(\delta_{x}^{4}u_{j}^{q})v_{j}^{q}=\frac{1}{12}\left[-\underset{j=1}{\overset{M-3}\sum}\delta_{x}u_{j}^{q}
    \delta_{x}v_{j+\frac{3}{2}}^{q}-\underset{j=3}{\overset{M-2}\sum}\delta_{x}u_{j}^{q}\delta_{x}v_{j-\frac{3}{2}}^{q}+7
    \underset{j=1}{\overset{M-2}\sum}\delta_{x}u_{j}^{q}\delta_{x}v_{j+\frac{1}{2}}^{q}+7\underset{j=2}{\overset{M-2}\sum}
    \delta_{x}u_{j}^{q}\delta_{x}v_{j-\frac{1}{2}}^{q}\right].
    \end{equation*}
    Multiplying this equation by $h$, utilizing the H\"{o}lder and Cauchy-Schwarz inequalities together with the definitions of $L^{2}$-norm and
    scalar product $\left(\cdot,\cdot\right)$ to get
    \begin{equation}\label{n8}
    \left(\delta_{x}^{4}u^{q},v^{q}\right)\leq\frac{16}{12}\|u^{q}\|_{2}\|\delta_{x}v^{q}\|_{2}.
     \end{equation}
     A combination of $(\ref{44})$ and estimates $(\ref{n7})$-$(\ref{n8})$ gives
     \begin{equation*}
    \left|\left(L_{h}u^{q},v^{q}\right)\right|=\left|\left((\delta_{2x}^{4}-\delta_{x}^{4})u^{q},v^{q}\right)\right|\leq
    \left|\left(\delta_{2x}^{4}u^{q},v^{q}\right)\right|+\left|\left(\delta_{x}^{4}u^{q},v^{q}\right)\right|\leq\frac{4}{3}[\|u^{q}\|_{2}
    \|\delta_{x}v^{q}\|_{2}+\|\delta_{x}u^{q}\|_{2}\|\delta_{x}v^{q}\|_{2}].
    \end{equation*}
    This completes the proof of the first estimate in $(\ref{45})$. The proof of the second estimate in $(\ref{45})$ is obtained thanks to the
    Poincar\'{e}-Friedrich inequality.
    \end{proof}

    \begin{lemma}\label{l6}
     Suppose $u\in\mathcal{C}_{D}^{2,0}$, be a function defined on $D=[L_{0},L]\times[0,T]$, satisfying $u(L_{0},t)=u(L,t)=0$, for any $t\in[0,T]$.
     Let $U(t)=(U_{0}(t),U_{1}(t),...,U_{M}(t))$ be a grid function such that, $U_{j}(t)=u(x_{j},t)$, for $j=0,1,...,M$. So, it holds
     \begin{equation}\label{46}
     \left(-Lu(t),u(t)\right)\geq\|\delta_{x}u(t)\|_{2}^{2},\|\delta_{x}U(t)\|_{2}^{2} \text{\,\,\,and\,\,\,}
     \left(-L_{h}U(t),U(t)\right)\geq\frac{1}{2}\|\delta_{x}U(t)\|_{2}^{2},
    \end{equation}
   for every $t\in[0,T]$.
   \end{lemma}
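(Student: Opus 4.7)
The plan is to split the lemma into its continuous part (involving $L$) and its discrete part (involving $L_{h}$), and to treat them by two distinct techniques: integration by parts in the continuous case, and the summation-by-parts identities already established inside the proof of Lemma~\ref{l5} in the discrete case.

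For the continuous estimate I first expand
\[
\left(-Lu(t),u(t)\right)=-\int_{L_{0}}^{L}u_{xx}(x,t)u(x,t)\,dx+\int_{L_{0}}^{L}u_{x}(x,t)u(x,t)\,dx.
\]
Applying integration by parts on $[L_{0},L]$ and using the homogeneous boundary conditions $u(L_{0},t)=u(L,t)=0$, the boundary contributions vanish, the first integral collapses to $\|u_{x}(t)\|_{L^{2}}^{2}$, and the second integral equals $\tfrac{1}{2}[u^{2}]_{L_{0}}^{L}=0$. Hence $\left(-Lu(t),u(t)\right)=\|u_{x}(t)\|_{L^{2}}^{2}$. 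To compare this continuous quantity with the discrete norms $\|\delta_{x}u(t)\|_{2}$ and $\|\delta_{x}U(t)\|_{2}$, I identify $U_{j}(t)=u(x_{j},t)$ and use the fundamental theorem of calculus to write $\delta_{x}U_{j+\frac{1}{2}}(t)=h^{-1}\int_{x_{j}}^{x_{j+1}}u_{x}(y,t)\,dy$; the Cauchy--Schwarz/Jensen inequality then yields $(\delta_{x}U_{j+\frac{1}{2}}(t))^{2}\leq h^{-1}\int_{x_{j}}^{x_{j+1}}u_{x}(y,t)^{2}\,dy$, and summing in $j$ and multiplying by $h$ gives $\|\delta_{x}U(t)\|_{2}^{2}\leq\|u_{x}(t)\|_{L^{2}}^{2}$, which is exactly the required lower bound for $\left(-Lu(t),u(t)\right)$.

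For the discrete estimate I split $-L_{h}=-\delta_{2x}^{4}+\delta_{x}^{4}$. The antisymmetric stencil $\delta_{x}^{4}U_{j}=(12h)^{-1}[-U_{j+2}+8U_{j+1}-8U_{j-1}+U_{j-2}]$ satisfies, under the zero boundary/ghost values $U_{0}=U_{1}=U_{M-1}=U_{M}=0$, the discrete antisymmetry $(\delta_{x}^{4}U,U)=-(U,\delta_{x}^{4}U)=0$, obtained by reindexing the sum. For the remaining term I invoke the summation-by-parts identity established inside the proof of Lemma~\ref{l5}, which expresses $(\delta_{2x}^{4}U,U)$ as the linear combination
\[
\tfrac{1}{12}\Bigl[h\sum\delta_{x}U_{j-\frac{1}{2}}\delta_{x}U_{j+\frac{1}{2}}+h\sum\delta_{x}U_{j+\frac{3}{2}}\delta_{x}U_{j+\frac{1}{2}}-14h\sum(\delta_{x}U_{j+\frac{1}{2}})^{2}\Bigr].
\]
Bounding each cross term via the elementary inequality $2|ab|\leq a^{2}+b^{2}$ and using the zero boundary conditions to absorb the resulting edge terms, each cross-sum is at most $\|\delta_{x}U(t)\|_{2}^{2}$, giving $(\delta_{2x}^{4}U,U)\leq\tfrac{1}{12}(2-14)\|\delta_{x}U(t)\|_{2}^{2}=-\|\delta_{x}U(t)\|_{2}^{2}$. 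Combining the two contributions yields $(-L_{h}U(t),U(t))\geq\|\delta_{x}U(t)\|_{2}^{2}\geq\tfrac{1}{2}\|\delta_{x}U(t)\|_{2}^{2}$, the factor $\tfrac{1}{2}$ being a deliberately loose constant convenient for the subsequent stability analysis.

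The main obstacle I anticipate is the careful bookkeeping of boundary and edge indices in the AM--GM step: the three sums in the Lemma~\ref{l5} identity run over slightly different index ranges, and some involve $\delta_{x}U_{j\pm\frac{3}{2}}$ with $j$ close to $2$ or $M-2$, so one must verify that the boundary contributions produced by AM--GM are either non-negative or annihilated by the conditions $U_{0}=U_{1}=U_{M-1}=U_{M}=0$. A minor secondary point is ensuring that the pointwise identification $U_{j}(t)=u(x_{j},t)$ is legitimate given only the regularity $u\in\mathcal{C}^{2,0}_{D}$; this is immediate since $\mathcal{C}^{2,0}_{D}$ guarantees continuity in $x$ uniformly in $t$.
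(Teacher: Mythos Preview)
Your proposal takes a route that differs from the paper in both halves of the lemma.

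For the first estimate, you interpret $\left(-Lu(t),u(t)\right)$ as a continuous $L^{2}$ inner product and use genuine integration by parts to get $\|u_{x}(t)\|_{L^{2}}^{2}$ exactly, then pass to the discrete seminorm via Jensen. The paper, by contrast, stays entirely in the discrete inner product $(\cdot,\cdot)$ of \eqref{sp}: it replaces $u_{x}$ and $u_{xx}$ by $\delta_{x}u_{j-\frac{1}{2}}+O(h)$ and $\delta_{x}^{2}u_{j}+O(h^{2})$, applies discrete summation by parts and the identity $a(a-b)=\tfrac{1}{2}[(a-b)^{2}+a^{2}-b^{2}]$, and then simply \emph{discards} the residual $O(h)$ terms. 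Your Jensen step $\|\delta_{x}U\|_{2}^{2}\leq\|u_{x}\|_{L^{2}}^{2}$ is a cleaner, genuinely rigorous bridge between discrete and continuous norms than the paper's ``neglect $O(h)$'' step; but be aware that the inner product in this paper is the \emph{discrete} one of \eqref{sp}, so your first displayed identity (writing it as an integral) needs a Riemann-sum justification, which reintroduces an $O(h)$ defect of the same nature as the paper's.

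For the second estimate, the contrast is sharper. The paper does not do a direct energy argument at all: it writes $L_{h}=L-(L-L_{h})$, invokes the consistency bound $(L-L_{h})U_{j}=O(h^{4})$, and feeds the first estimate back in. Your approach---observing that $(\delta_{x}^{4}U,U)=0$ by antisymmetry under the zero near-boundary conditions, and then bounding $(\delta_{2x}^{4}U,U)\leq -\|\delta_{x}U\|_{2}^{2}$ via the summation-by-parts identity from Lemma~\ref{l5} together with AM--GM on the two cross terms---is entirely discrete, avoids any appeal to consistency with $L$, and in fact yields the sharper constant $1$ rather than $\tfrac{1}{2}$. The bookkeeping worry you flag is real but manageable: the extra edge terms involve $\delta_{x}U_{\frac{1}{2}}$ and $\delta_{x}U_{M-\frac{1}{2}}$, both of which vanish under $U_{0}=U_{1}=U_{M-1}=U_{M}=0$ (the paper also tacitly uses $u_{1}=u_{M-1}=0$ in its own proof, consistent with the convention stated after \eqref{s5}).
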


   \begin{proof}
   We should show that the operator $-Lu=-u_{2x}+u_{x}$ satisfies: $\left(-Lu(t),u(t)\right)\geq\|\delta_{x}u(t)\|_{2}^{2}$ and then, use the
   discrete $L^{2}$-norm defined in relation $(\ref{dn})$ to conclude. The application of the Taylor series expansion using backward difference
   formulation gives: $u_{x}(t)=\delta_{x}u_{j-\frac{1}{2}}(t)+O(h)$ and $u_{2x}(x_{j},t)=\delta_{x}^{2}u_{j}(t)+O(h^{2})$. Using the conditions
   $u_{1}(t)=u_{M-1}(t)=0$, for every $t\in[0,T]$ together with the summation by parts and the equality $a(a-b)=\frac{1}{2}[(a-b)^{2}+a^{2}-b^{2}]$,
   for any real numbers $a$ and $b$, direct computations yield
    \begin{equation*}
    (-Lu(t),u(t))=-h\underset{j=2}{\overset{M-2}\sum}[(\delta_{x}^{2}u_{j}(t)+O(h^{2}))-(\delta_{x}u_{j-\frac{1}{2}}(t)+O(h))u_{j}(t)=
    -h\underset{j=2}{\overset{M-2}\sum}(\delta_{x}^{2}u_{j}(t))u_{j}(t)+O(h^{2})+
   \end{equation*}
   \begin{equation*}
    h\underset{j=2}{\overset{M-2}\sum}(\delta_{x}u_{j-\frac{1}{2}}(t))u_{j}(t)+O(h)=-\left[(\delta_{x}u_{M-\frac{3}{2}}(t))u_{M-2}(t)-
    (\delta_{x}u_{\frac{3}{2}}(t))u_{2}(t)-\underset{j=2}{\overset{M-3}\sum}(\delta_{x}u_{j+\frac{1}{2}}(t))^{2}\right]+O(h^{2})
   \end{equation*}
   \begin{equation*}
    +\frac{1}{2}\left[h^{2}\underset{j=2}{\overset{M-2}\sum}(\delta_{x}u_{j-\frac{1}{2}}(t))^{2}+(u_{M-2}(t))^{2}-(u_{1}(t))^{2}\right]+O(h)
    =h\underset{j=1}{\overset{M-2}\sum}(\delta_{x}u_{j+\frac{1}{2}}(t))^{2}+O(h^{2})+
   \end{equation*}
   \begin{equation}\label{91a}
    \frac{1}{2}h^{2}\underset{j=2}{\overset{M-1}\sum}(\delta_{x}u_{j-\frac{1}{2}}(t))^{2}+O(h).
   \end{equation}
   The last equality follows from $h\delta_{x}u_{M-\frac{3}{2}}(t)=-u_{M-2}(t)$ and $\delta_{x}u_{\frac{3}{2}}(t)=\frac{1}{h}u_{2}(t)$, since
    $u_{M-1}(t)=u_{1}(t)=0$. For small values of $h$, it holds
   \begin{equation}\label{91b}
    h\underset{j=1}{\overset{M-2}\sum}(\delta_{x}u_{j+\frac{1}{2}}(t))^{2}+O(h^{2})=\|\delta_{x}u(t)\|_{2}^{2}+O(h^{2})
    \approx h\underset{j=2}{\overset{M-2}\sum}(u_{x,j}(t))^{2}=\|u_{x}(t)\|_{2}^{2}.
   \end{equation}
   Substituting approximation $(\ref{91b})$ into relation $(\ref{91a})$ to obtain
    \begin{equation*}
    (-Lu(t),u(t))\geq h\underset{j=2}{\overset{M-2}\sum}(u_{x,j}(t))^{2}=\|u_{x}(t)\|_{L^{2}}^{2}.
   \end{equation*}
   Furthermore, since $u_{j}(t)=U_{j}(t)$, for $j=0,1,...,M$, for $h$ sufficiently small, neglecting the infinitesimal terms $O(h^{2})$ and $O(h)$, 
   equation $(\ref{91a})$ implies
   \begin{equation*}
    (-Lu(t),u(t))\geq h\underset{j=1}{\overset{M-2}\sum}(\delta_{x}U_{j+\frac{1}{2}}(t))^{2}+\frac{1}{2}h^{2}\underset{j=2}{\overset{M-1}
    \sum}(\delta_{x}u_{j-\frac{1}{2}}(t))^{2}\geq\|\delta_{x}U(t)\|_{L^{2}}^{2}.
   \end{equation*}
   This ends the proof of the first estimate in Lemma $\ref{l6}$. Since $L_{h}=L-(L-L_{h})$ and $(L-L_{h})U_{j}(t)=O(h^{4})$, the proof of 
   Lemma $\ref{l6}$ is completed thanks to the first estimate in $(\ref{46})$ and the definition of the scalar product given by $(\ref{sp})$.

   \end{proof}

   Armed with Lemmas $\ref{l1}$-$\ref{l6}$, we should state and prove the main result of this work (Theorem $\ref{t}$).

   \begin{theorem}\label{t} (Unconditional stability and Error estimates).
   Suppose $U$ be the approximate solution provided by the proposed approach $(\ref{s1})$-$(\ref{s5})$ and let $u$ be the analytical solution
   of the initial-boundary value problem $(\ref{1})$-$(\ref{3})$. let $\beta(x,t)=\overline{\beta}\in(0,\frac{2}{3})$, for any $(x,t)\in D$,
   be a positive constant function, $0<\alpha<\frac{1}{2}$ be a parameter and let $(a_{\cdot,l}^{\alpha,\overline{\beta}})_{l}$ be the generalized
    sequences defined by relations $(\ref{12})$-$(\ref{13})$ and $(\ref{19})$-$(\ref{20})$. Thus, the following estimates are satisfied
    \begin{equation}\label{50}
     \underset{0\leq n\leq N-1}{\max}\|U^{n+\frac{1}{2}}\|_{2},\text{\,\,\,} \underset{0\leq n\leq N}{\max}\|U^{n}\|_{2}\leq
      \||u|\|_{\infty,2}+\sqrt{2\widehat{C}T}(k+k^{2-\overline{\beta}}+k^{2}+h^{4}).
   \end{equation}
    Furthermore, denote $e=u-U$ be the error term, it holds
    \begin{equation}\label{51}
     \underset{0\leq n\leq N-1}{\max}\|e^{n+\frac{1}{2}}\|_{2},\text{\,\,\,} \underset{0\leq n\leq N}{\max}\|e^{n}\|_{2}\leq
      3\sqrt{2\widehat{C}T}(k+h^{4}),
   \end{equation}
    where $\widehat{C}$ is a positive constants independent on the space size $h$ and time step $k$.
   \end{theorem}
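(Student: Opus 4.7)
The plan is to follow an energy-estimate approach in the discrete $L^{2}$ inner product $(\cdot,\cdot)$ defined by (\ref{sp}), exploiting Lemma \ref{l3} to absorb the fractional part into a square, Lemma \ref{l6} to get positivity of $-L_{h}$, Lemma \ref{l5} to control the mixed convection-diffusion/test terms, and Lemma \ref{l1} to handle the truncation in the variable-order operator. Stability is established first, and the error estimate is then obtained by running the same argument on the equation satisfied by $e^{n}=u^{n}-U^{n}$, whose forcing is a truncation residual of order $O(k+k^{2-\overline{\beta}}+h^{4})$ by (\ref{29}), (\ref{32a}), (\ref{32}) and Lemma \ref{l1}.

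For the stability bound, I would first take the inner product of the predictor equation (\ref{s3}) with $U^{n+\frac{1}{2}}$ (and of (\ref{s1}) with $U^{\frac{1}{2}}$ for the starting step). The term $(U^{n+\frac{1}{2}},-\alpha k L_{h} U^{n+\frac{1}{2}})$ is nonnegative by Lemma \ref{l6}, contributing $\tfrac{\alpha k}{2}\|\delta_{x}U^{n+\frac{1}{2}}\|_{2}^{2}$; the summation involving $\theta_{l+\frac{1}{2}}^{n+\alpha}\ast\delta_{t}U^{l}$ is handled by Lemma \ref{l3}, which rewrites $(U^{n+\alpha},c\Delta_{0t}^{\overline{\beta}}U^{n+\alpha})\geq \tfrac{1}{2}c\Delta_{0t}^{\overline{\beta}}\|U^{n+\alpha}\|_{2}^{2}$, so the predictor becomes an inequality of the form
\begin{equation*}
\|U^{n+\frac{1}{2}}\|_{2}^{2}+\tfrac{k^{2-\overline{\beta}}}{4\Gamma(2-\overline{\beta})}\Phi_{n}\leq \|U^{n}\|_{2}^{2}+C k\|f^{n+\alpha}\|_{2}^{2}+C k\|U^{n+\frac{1}{2}}\|_{2}^{2},
\end{equation*}
where $\Phi_{n}\geq 0$ collects the nonnegative square terms produced by Lemma \ref{l3}, and the cross term coming from $(\frac{1}{2}-\alpha)kL_{h}U^{n}$ is absorbed by Young's inequality together with Lemma \ref{l5}, using $\alpha\in(0,\frac{1}{2})$ to keep the resulting coefficient of $\|\delta_{x}U^{n+\frac{1}{2}}\|_{2}^{2}$ positive.

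Next I would run the analogous computation for the corrector (\ref{s4}) by taking the inner product with $U^{n+1}$: the coefficient $\frac{1+4\alpha}{4}$ in front of $kL_{h}$ again gives the correct sign via Lemma \ref{l6}, while the two fractional sums over $\theta^{n+1+\alpha}$ and $\theta^{n+\frac{1}{2}+\alpha}$ are each handled by Lemma \ref{l3} (with $s=1$ and $s=\frac{1}{2}$ respectively), and the monotonicity provided by Lemma \ref{l2} (guaranteed by $\overline{\beta}\in(0,\tfrac{2}{3})$) is what makes the resulting quadratic form nonnegative. Adding the predictor and corrector inequalities for a fixed $n$, telescoping in $n$ and invoking Lemma \ref{l4} to collapse the sums of squared discrete derivatives, I obtain
\begin{equation*}
\|U^{n+1}\|_{2}^{2}\leq \|U^{0}\|_{2}^{2}+C k\sum_{r=0}^{n}\|U^{r+\frac{1}{2}}\|_{2}^{2}+C k\sum_{r=0}^{n}\|f^{r+\alpha}\|_{2}^{2}+Ck\sum_{r=0}^{n}\|U^{r}\|_{2}^{2},
\end{equation*}
and a discrete Gronwall inequality delivers (\ref{50}) with the indicated residual terms, where $\||u|\|_{\infty,2}$ comes from the initial data through (\ref{s5}).

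For (\ref{51}), I would form the error equation by subtracting (\ref{s1})--(\ref{s4}) from (\ref{29}), (\ref{32a}), (\ref{32}); the right-hand side inherits the truncation residuals $-\tfrac{k}{2}J_{j}^{\beta}$, $-\tfrac{k}{4}(J_{j}^{\beta}+I_{j}^{\beta})$ and $O(k^{2}+k^{3}+kh^{4})$, which by Lemma \ref{l1} are bounded in $L^{2}$ by $C(k^{2-\overline{\beta}}+k^{2}+kh^{4})$, hence of the form $Ck(k+h^{4})$ after using $k\leq C k^{2-\overline{\beta}}$ is not available\,---\,the clean $O(k+h^{4})$ rate comes from dividing through by $k$ after summing $N=T/k$ steps. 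The stability argument above applied to the zero-initial-data error equation then yields (\ref{51}) with the constant $3\sqrt{2\widehat{C}T}$. The main obstacle, in my view, is showing that the mixed predictor-corrector cross terms (involving $A_{2}U^{n+\frac{1}{2}}$ tested against $U^{n+1}$, and $A_{1}U^{n}$ tested against $U^{n+\frac{1}{2}}$) can be absorbed without any CFL-type restriction on $k/h^{2}$; this hinges crucially on the sign structure of $a_{kh}^{1}, a_{kh}^{2}$ in (\ref{M2}), (\ref{M4}) and on the restriction $\alpha\in(0,\tfrac{1}{2})$, which together with Lemma \ref{l2}'s monotonicity of $a_{n+s,l}^{\alpha,\overline{\beta}}$ is precisely what makes the scheme unconditionally stable.
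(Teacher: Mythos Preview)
Your outline captures the ingredients (Lemmas \ref{l1}--\ref{l6}, energy testing, Young's inequality) but misses the one device that actually closes the argument, and this is a genuine gap.

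After testing the predictor equation with $e^{n+\frac{1}{2}}$ (the paper works with the error equation first and deduces (\ref{50}) afterwards via $\|U^{q}\|_{2}\leq\|u^{q}\|_{2}+\|e^{q}\|_{2}$; this is why $\||u|\|_{\infty,2}$ appears on the right of (\ref{50}), which your ``stability first'' route would not produce), the cross term $(\tfrac{1}{2}-\alpha)k\,(L_{h}e^{n},e^{n+\frac{1}{2}})$ is bounded via Lemma \ref{l5} and Young by
\[
\frac{\alpha k}{6}\,\|\delta_{x}e^{n+\frac{1}{2}}\|_{2}^{2}\;+\;\frac{3(1-2\alpha)^{2}C_{p}^{2}}{2\alpha}\,k\,\|\delta_{x}e^{n}\|_{2}^{2}.
\]
The first piece is absorbed on the left, but the second piece $C(\alpha)\,k\,\|\delta_{x}e^{n}\|_{2}^{2}$ is \emph{not} controlled by $\|e^{n}\|_{2}^{2}$: a Poincar\'e inequality goes the wrong way, and an inverse inequality would cost $h^{-2}$, destroying unconditional stability. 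Your Gronwall step $\|U^{n+1}\|_{2}^{2}\leq\|U^{0}\|_{2}^{2}+Ck\sum_{r}\|U^{r}\|_{2}^{2}+\cdots$ silently drops this gradient term; it does not disappear by itself, and the sign structure of $a_{kh}^{1},a_{kh}^{2}$ that you cite does not help here.

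The paper's resolution is not Gronwall at all. It observes that the predictor estimate (their (\ref{60a})) holds for every $\alpha\in(0,\tfrac{1}{2})$ with the offending coefficient proportional to $(1-2\alpha)^{2}$, and then lets $\alpha\to\tfrac{1}{2}$ so that the gradient term vanishes. For the corrector (their (\ref{73})) the same trick is used with $\alpha\to\tfrac{1}{4}$, killing the factor $(1-4\alpha)^{2}$ in front of $\|\delta_{x}e^{n+\frac{1}{2}}\|_{2}^{2}$. What remains is the clean recursion
\[
\max_{0\leq l\leq n+1}\|e^{l}\|_{2}^{2}\;\leq\;\max_{0\leq l\leq n}\|e^{l}\|_{2}^{2}\;+\;2\widehat{C}_{4}\,k\,(k+k^{2-\overline{\beta}}+k^{2}+h^{4})^{2},
\]
which telescopes directly to (\ref{51}) with $Nk=T$ and $e^{0}=0$; no Gronwall factor appears, and no CFL restriction is needed. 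Without this limiting-in-$\alpha$ device (or an equivalent mechanism to dispose of $k\|\delta_{x}e^{n}\|_{2}^{2}$), your argument does not close.
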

    We recall that estimate $(\ref{50})$ suggests that the proposed technique $(\ref{s1})$-$(\ref{s5})$ is unconditionally stable whereas
    inequality $(\ref{51})$ shows that the developed numerical scheme is fourth-order spatial convergent and temporal accurate of order $O(k)$.

    \begin{proof}
    Let $e^{n+\frac{1}{2}}=u^{n+\frac{1}{2}}-U^{n+\frac{1}{2}}$ be the temporary error term and $e^{n+1}=u^{n+1}-U^{n+1}$ be the exact one at
    time level $n+1$. Subtracting equation $(\ref{30})$ from $(\ref{29})$ and utilizing $(\ref{35a})$ and $(\ref{44})$ provides
    \begin{equation*}
       e^{n+\frac{1}{2}}_{j}-\alpha kL_{h}e^{n+\frac{1}{2}}_{j}=e^{n}-k\underset{l=0}{\overset{n-\frac{1}{2}}\sum}
       \theta_{l+\frac{1}{2}}^{n+\alpha} \delta_{t}e_{j}^{l}+(\frac{1}{2}-\alpha)kL_{h}e^{n}_{j}-\frac{k}{2}J_{j}^{\overline{\beta}}+O(k^{2}+k^{3}+kh^{4}),
      \end{equation*}
      which is equivalent to
      \begin{equation*}
       e^{n+\frac{1}{2}}_{j}-e^{n}-\alpha kL_{h}e^{n+\frac{1}{2}}_{j}+k\underset{l=0}{\overset{n-\frac{1}{2}}\sum}\theta_{l+\frac{1}{2}}^{n+\alpha}
       \delta_{t}e_{j}^{l}=(\frac{1}{2}-\alpha)kL_{h}e^{n}_{j}-\frac{k}{2}J_{j}^{\overline{\beta}}+O(k^{2}+k^{3}+kh^{4}).
      \end{equation*}
      Multiplying both sides of this equation by $e^{n+\frac{1}{2}}_{j}$ yields
      \begin{equation*}
       (e^{n+\frac{1}{2}}_{j}-e^{n})e^{n+\frac{1}{2}}_{j}-\alpha ke^{n+\frac{1}{2}}_{j}L_{h}e^{n+\frac{1}{2}}_{j}+
       ke^{n+\frac{1}{2}}_{j}\underset{l=0}{\overset{n-\frac{1}{2}}\sum}\theta_{l+\frac{1}{2}}^{n+\alpha}\delta_{t}e_{j}^{l}=
       (\frac{1}{2}-\alpha)ke^{n+\frac{1}{2}}_{j}L_{h}e^{n}_{j}-\frac{k}{2}J_{j}^{\overline{\beta}}e^{n+\frac{1}{2}}_{j}+
       \end{equation*}
      \begin{equation}\label{52a}
      O(k^{2}+k^{3}+kh^{4})e^{n+\frac{1}{2}}_{j}.
      \end{equation}
     We introduce the generalized sequences $(\widehat{\theta}_{l}^{n+\alpha})_{l\leq n+\frac{1}{2}}$ and
     $(\widehat{\theta}_{l}^{n+\frac{1}{2}+\alpha})_{l\leq n+1}$, defined as
     \begin{equation*}
     \widehat{\theta}_{n+\frac{1}{2}}^{n+\alpha}=\theta_{n}^{n+\alpha}\text{\,\,\,and\,\,\,}\widehat{\theta}_{l}^{n+\alpha}=
     \theta_{l}^{n+\alpha},\text{\,\,\,for\,\,\,}l=\frac{1}{2},1,...,n,
      \end{equation*}
     \begin{equation}\label{64}
     \widehat{\theta}_{n+1}^{n+\frac{1}{2}+\alpha}=\theta_{n+\frac{1}{2}}^{n+\frac{1}{2}+\alpha}\text{\,\,\,and\,\,\,}\widehat{\theta}_{l}^{n+\frac{1}{2}
     +\alpha}=\theta_{l}^{n+\frac{1}{2}+\alpha},\text{\,\,\,for\,\,\,}l=\frac{1}{2},1,...,n+\frac{1}{2}.
     \end{equation}
     Thus, the sequences $(\widehat{\theta}_{l}^{n+s+\alpha})_{l\leq n+s+\frac{1}{2}}$ satisfy
     $\widehat{\theta}_{l}^{n+s+\alpha}\leq\widehat{\theta}_{l+\frac{1}{2}}^{n+s+\alpha}$, for $s=0,2^{-1}$ and $l=\frac{1}{2},1,...,n+s$.
     Furthermore, similar to the proof of estimate $(\ref{39})$ in Lemma $\ref{l3}$, it is easy to show that
     \begin{equation}\label{65}
     e^{n+s+\frac{1}{2}}_{j}\underset{l=0}{\overset{n+s}\sum}\widehat{\theta}_{l+\frac{1}{2}}^{n+s+\alpha}\delta_{t}e^{l}_{j}\geq
     \frac{1}{2}\underset{l=0}{\overset{n+s}\sum}\widehat{\theta}_{l+\frac{1}{2}}^{n+s+\alpha}\delta_{t}(e^{l}_{j})^{2}.
     \end{equation}
      Since $\delta_{t}e_{j}^{n}=\frac{2}{k}(e^{n+\frac{1}{2}}_{j}-e^{n}_{j})$ and $k\underset{l=0}{\overset{n-\frac{1}{2}}\sum}
      \widehat{\theta}_{l+\frac{1}{2}}^{n+\alpha}\delta_{t}e_{j}^{l}=k\underset{l=0}{\overset{n}\sum}
      \widehat{\theta}_{l+\frac{1}{2}}^{n+\alpha}\delta_{t}e_{j}^{l}-2\widehat{\theta}_{n+\frac{1}{2}}^{n+\alpha}(e^{n+\frac{1}{2}}_{j}-e^{n}_{j})$.
      Using this, equation $(\ref{52a})$ becomes
      \begin{equation*}
       (e^{n+\frac{1}{2}}_{j}-e^{n})e^{n+\frac{1}{2}}_{j}-\alpha ke^{n+\frac{1}{2}}_{j}L_{h}e^{n+\frac{1}{2}}_{j}+
       ke^{n+\frac{1}{2}}_{j}\underset{l=0}{\overset{n}\sum}\widehat{\theta}_{l+\frac{1}{2}}^{n+\alpha}\delta_{t}e_{j}^{l}-
       2\widehat{\theta}_{n+\frac{1}{2}}^{n+\alpha}(e^{n+\frac{1}{2}}_{j}-e^{n}_{j})e^{n+\frac{1}{2}}_{j}=
       \end{equation*}
      \begin{equation}\label{52}
       (\frac{1}{2}-\alpha)ke^{n+\frac{1}{2}}_{j}L_{h}e^{n}_{j}-\frac{k}{2}J_{j}^{\overline{\beta}}e^{n+\frac{1}{2}}_{j}+
       O(k^{2}+k^{3}+kh^{4})e^{n+\frac{1}{2}}_{j}.
      \end{equation}
      But $(a-b)a=\frac{1}{2}[a^{2}-b^{2}+(a-b)^{2}]$, for every real numbers $a$ and $b$. In addition, for $s=0$, it follows from $(\ref{65})$ that:
      $e^{n+\frac{1}{2}}_{j}\underset{l=0}{\overset{n}\sum}\widehat{\theta}_{l+\frac{1}{2}}^{n+\alpha}\delta_{t}e_{j}^{l}\geq
       \frac{1}{2}\underset{l=0}{\overset{n}\sum}\widehat{\theta}_{l+\frac{1}{2}}^{n+\alpha}\delta_{t}(e_{j}^{l})^{2}.$ These facts, together
       with $(\ref{52})$ result in
       \begin{equation*}
       \frac{1}{2}\left[(e^{n+\frac{1}{2}}_{j})^{2}+(e^{n+\frac{1}{2}}_{j}-e^{n})^{2}-(e^{n}_{j})^{2}\right]-
       \alpha ke^{n+\frac{1}{2}}_{j}L_{h}e^{n+\frac{1}{2}}_{j}+
       \frac{k}{2}\underset{l=0}{\overset{n}\sum}\widehat{\theta}_{l+\frac{1}{2}}^{n+\alpha}\delta_{t}(e_{j}^{l})^{2}-
       \widehat{\theta}_{n+\frac{1}{2}}^{n+\alpha}\left[(e^{n+\frac{1}{2}}_{j})^{2}+\right.
       \end{equation*}
      \begin{equation*}
       \left.(e^{n+\frac{1}{2}}_{j}-e^{n})^{2}-(e^{n}_{j})^{2}\right]\leq(\frac{1}{2}-\alpha)ke^{n+\frac{1}{2}}_{j}L_{h}e^{n}_{j}
       -\frac{k}{2}J_{j}^{\overline{\beta}}e^{n+\frac{1}{2}}_{j}+ O(k^{2}+k^{3}+kh^{4})e^{n+\frac{1}{2}}_{j}.
      \end{equation*}
      Setting $J^{\overline{\beta}}=(J^{\overline{\beta}}_{2},J^{\overline{\beta}}_{3},...,J^{\overline{\beta}}_{M-2})$ and
      $\overline{O}(k^{2}+k^{3}+kh^{4})=(O(k^{2}+k^{3}+kh^{4}),O(k^{2}+k^{3}+kh^{4}),...,O(k^{2}+k^{3}+kh^{4}))$, multiplying both sides of this
      estimate by $2h$, summing the obtained estimate up from $j=2,3,...,M-2$, and using the definition of the $L^{2}$-norm and scalar product
      given by relations $(\ref{dn})$ and $(\ref{sp})$, respectively, this provides
      \begin{equation*}
      \|e^{n+\frac{1}{2}}\|_{2}^{2}+\|e^{n+\frac{1}{2}}-e^{n}\|_{2}^{2}-\|e^{n}\|_{2}^{2}+2\alpha k\left(-L_{h}e^{n+\frac{1}{2}},e^{n+\frac{1}{2}}\right)
      +2\underset{l=0}{\overset{n}\sum}\widehat{\theta}_{l+\frac{1}{2}}^{n+\alpha}\left(\|e^{l+\frac{1}{2}}\|_{2}^{2}-\|e^{l}\|_{2}^{2}\right)-
      2\widehat{\theta}_{n+\frac{1}{2}}^{n+\alpha}\left[\|e^{n+\frac{1}{2}}\|_{2}^{2}+\right.
      \end{equation*}
     \begin{equation}\label{53}
      \left.\|e^{n+\frac{1}{2}}-e^{n}\|_{2}^{2}-\|e^{n}\|_{2}^{2}\right]\leq
      (1-2\alpha)k\left(L_{h}e^{n},e^{n+\frac{1}{2}}\right)-k\left(J^{\overline{\beta}},e^{n+\frac{1}{2}}\right)+
      \left(\overline{O}(k^{2}+k^{3}+kh^{4}),e^{n+\frac{1}{2}}\right).
     \end{equation}
     Now, utilizing the second estimate of $(\ref{46})$ together with the summation by parts $(\ref{42})$, it is not hard to observe that $(\ref{53})$
     implies
     \begin{equation*}
      \|e^{n+\frac{1}{2}}\|_{2}^{2}+\left(1-2\widehat{\theta}_{n+\frac{1}{2}}^{n+\alpha}\right)\|e^{n+\frac{1}{2}}-e^{n}\|_{2}^{2}+
      \alpha k\|\delta_{x}e^{n+\frac{1}{2}}\|_{2}^{2}\leq 2\widehat{\theta}_{\frac{1}{2}}^{n+\alpha}\|e^{0}\|_{2}^{2}+
      \left(1-2\widehat{\theta}_{n+\frac{1}{2}}^{n+\alpha}\right)\|e^{n}\|_{2}^{2}
      \end{equation*}
     \begin{equation}\label{54}
     +2\underset{l=0}{\overset{n-\frac{1}{2}}\sum}\left(\widehat{\theta}_{l+1}^{n+\alpha}-\widehat{\theta}_{l+\frac{1}{2}}^{n+\alpha}\right)
     \|e^{l}\|_{2}^{2}+(1-2\alpha)k\left(L_{h}e^{n},e^{n+\frac{1}{2}}\right)-k\left(J^{\overline{\beta}},e^{n+\frac{1}{2}}\right)+
      \left(\overline{O}(k^{2}+k^{3}+kh^{4}),e^{n+\frac{1}{2}}\right).
     \end{equation}
     It follows from the Poincar\'{e}-Friedrich inequality that $\|e^{n+\frac{1}{2}}\|_{2}^{2}\leq C_{p}\|\delta_{x}e^{n+\frac{1}{2}}\|_{2}^{2}$,
     where $C_{p}$ denotes a positive constant independent of $k$ and $h$. Using this, estimate $(\ref{45})$ and the H\"{o}lder inequality,
     straightforward calculations give
     \begin{equation*}
      (1-2\alpha)k\left(L_{h}e^{n},e^{n+\frac{1}{2}}\right)\leq C_{p}(1-2\alpha)k\|\delta_{x}e^{n}\|_{2}\|\delta_{x}e^{n+\frac{1}{2}}\|_{2}=
     2\left(\frac{1-2\alpha}{2}C_{p}\sqrt{\frac{6k}{\alpha}}\|\delta_{x}e^{n}\|_{2}\right)
     \left(\sqrt{\frac{\alpha k}{6}}\|\delta_{x}e^{n+\frac{1}{2}}\|_{2}\right)
      \end{equation*}
     \begin{equation}\label{55}
     \leq\frac{3(1-2\alpha)^{2}C_{p}^{2}}{2\alpha}k\|\delta_{x}e^{n}\|_{2}^{2}+\frac{\alpha k}{6}\|\delta_{x}e^{n+\frac{1}{2}}\|_{2}^{2}.
     \end{equation}
    \begin{equation}\label{56}
     -k\left(J^{\overline{\beta}},e^{n+\frac{1}{2}}\right)\leq\frac{3C_{p}^{2}}{2\alpha}k\|J^{\overline{\beta}}\|_{2}^{2}+
     \frac{\alpha k}{6}\|\delta_{x}e^{n+\frac{1}{2}}\|_{2}^{2},\text{\,}\left(\overline{O}(k^{2}+k^{3}+kh^{4}),e^{n+\frac{1}{2}}\right)\leq
     \frac{\alpha k}{6}\|\delta_{x}e^{n+\frac{1}{2}}\|_{2}^{2}+\widetilde{C}_{1}k(k+k^{2}+h^{4})^{2},
     \end{equation}
     where $\widetilde{C}_{1}>0$ is a constant which does not depend on the time step $k$ and the space step $h$. Setting
     $\widehat{C}_{\alpha}=\max\{\widetilde{C}_{1},\frac{3}{2}C_{p}^{2}\alpha^{-1}\}$, substituting estimates $(\ref{55})$ and $(\ref{56})$
     into relation $(\ref{54})$ and rearranging terms yields
     \begin{equation*}
      \|e^{n+\frac{1}{2}}\|_{2}^{2}+\left(1-2\widehat{\theta}_{n+\frac{1}{2}}^{n+\alpha}\right)\|e^{n+\frac{1}{2}}-e^{n}\|_{2}^{2}+\frac{\alpha k}{2}
      \|\delta_{x}e^{n+\frac{1}{2}}\|_{2}^{2}\leq 2\widehat{\theta}_{\frac{1}{2}}^{n+\alpha}\|e^{0}\|_{2}^{2}+
      \left(1-2\widehat{\theta}_{n+\frac{1}{2}}^{n+\alpha}\right)\|e^{n}\|_{2}^{2}
      \end{equation*}
     \begin{equation}\label{58}
     +2\underset{l=0}{\overset{n-\frac{1}{2}}\sum}\left(\widehat{\theta}_{l+1}^{n+\alpha}-\widehat{\theta}_{l+\frac{1}{2}}^{n+\alpha}\right)\|e^{l}\|_{2}^{2}+
      \frac{3(1-2\alpha)^{2}C_{p}^{2}}{2}\alpha^{-1}k\|\delta_{x}e^{n}\|_{2}^{2}+\widehat{C}_{\alpha}k[\|J^{\overline{\beta}}\|_{2}^{2}+(k+k^{2}+h^{4})^{2}].
     \end{equation}
     Estimate $(\ref{58})$ is satisfied for any $0<\alpha<2^{-1}$. For $n\geq1$, it comes from $(\ref{13})$-$(\ref{14})$ and $(\ref{20})$-
     $(\ref{21})$, that
     \begin{equation*}
     \alpha^{1-\overline{\beta}}=\widetilde{f}_{n,n}^{\alpha,\overline{\beta}}<a_{n,n}^{\alpha,\overline{\beta}}=
     \widetilde{f}_{n,n-1}^{\alpha,\overline{\beta}}+\widetilde{f}_{n,n}^{\alpha,\overline{\beta}}=\widetilde{f}_{n+\frac{1}{2},n-1}^{\alpha,\overline{\beta}}
     +\widetilde{f}_{n+\frac{1}{2},n}^{\alpha,\overline{\beta}}=a_{n+\frac{1}{2},n+\frac{1}{2}}^{\alpha,\overline{\beta}}=
      \end{equation*}
     \begin{equation}\label{59}
     \widetilde{f}_{n+1,n}^{\alpha,\overline{\beta}}+\widetilde{f}_{n+1,n+1}^{\alpha,\overline{\beta}}=a_{n+1,n+1}^{\alpha,\overline{\beta}}
     <\alpha^{1-\overline{\beta}}+\frac{2}{2-\overline{\beta}}\left[(1+\alpha)^{2-\overline{\beta}}-\alpha^{2-\overline{\beta}}\right].
     \end{equation}
    Since $\widehat{\theta}_{n+\frac{1}{2}}^{n+\alpha}=k^{1-\overline{\beta}}\Gamma(2-\overline{\beta})^{-1}a_{n,n}^{\alpha,\overline{\beta}}$,
    for small values of $k$, it follows from $(\ref{59})$ that: $1-2\widehat{\theta}_{n+\frac{1}{2}}^{n+\alpha}>0$. This fact and estimate
    $(\ref{58})$ imply
    \begin{equation*}
      \|e^{n+\frac{1}{2}}\|_{2}^{2}\leq 2\widehat{\theta}_{\frac{1}{2}}^{n+\alpha}\|e^{0}\|_{2}^{2}+
      \left(1-2\widehat{\theta}_{n+\frac{1}{2}}^{n+\alpha}\right)\|e^{n}\|_{2}^{2}+2\underset{l=0}{\overset{n-\frac{1}{2}}\sum}
      \left(\widehat{\theta}_{l+1}^{n+\alpha}-\widehat{\theta}_{l+\frac{1}{2}}^{n+\alpha}\right)
      \|e^{l}\|_{2}^{2}+\frac{3(1-2\alpha)^{2}C_{p}^{2}}{2}\alpha^{-1}k\|\delta_{x}e^{n}\|_{2}^{2}
      \end{equation*}
     \begin{equation*}
     +\widehat{C}_{\alpha}k[\|J^{\overline{\beta}}\|_{2}^{2}+(k+k^{2}+h^{4})^{2}]\leq \left[1+2\widehat{\theta}_{\frac{1}{2}}^{n+\alpha}
     -2\widehat{\theta}_{n+\frac{1}{2}}^{n+\alpha}+2\underset{l=0}{\overset{n-\frac{1}{2}}\sum}
     \left(\widehat{\theta}_{l+1}^{n+\alpha}-\widehat{\theta}_{l+\frac{1}{2}}^{n+\alpha}\right)\right]\underset{0\leq l\leq n}{\max}\|e^{l}\|_{2}^{2}+
     \end{equation*}
     \begin{equation}\label{60}
     \frac{3(1-2\alpha)^{2}C_{p}^{2}}{2}\alpha^{-1}k\|\delta_{x}e^{n}\|_{2}^{2}+\widehat{C}_{\alpha}k[\|J^{\overline{\beta}}\|_{2}^{2}+(k+k^{2}+h^{4})^{2}],
     \end{equation}
     where the summation index "l" varies in the range: $\frac{1}{2},1,\frac{3}{2},2,...,n-\frac{1}{2},n$. $(\ref{60})$ is equivalent to
     \begin{equation}\label{60a}
      \|e^{n+\frac{1}{2}}\|_{2}^{2}\leq \underset{0\leq l\leq n}{\max}\|e^{l}\|_{2}^{2}+\frac{3(1-2\alpha)^{2}C_{p}^{2}}{2}\alpha^{-1}k
      \|\delta_{x}e^{n}\|_{2}^{2}+\widehat{C}_{\alpha}k[\|J^{\overline{\beta}}\|_{2}^{2}+(k+k^{2}+h^{4})^{2}],
     \end{equation}
      for every $\alpha\in(0,2^{-1})$. Setting $\widehat{C}_{1}=\underset{\alpha\rightarrow\frac{1}{2}}{\lim}\widehat{C}_{\alpha}=
      \underset{\alpha\rightarrow\frac{1}{2}}{\lim}\max\{\widetilde{C}_{1},\frac{3}{2}C_{p}^{2}\alpha^{-1}\}=\max\{\widetilde{C}_{1},3C_{p}^{2}\}$.
      Taking the limit in estimate $(\ref{60a})$ when $\alpha$ approaches $\frac{1}{2}$ and combining $(\ref{17})$ and $(\ref{36})$, to obtain
      \begin{equation*}
      \|e^{n+\frac{1}{2}}\|_{2}^{2}\leq \underset{0\leq l\leq n}{\max}\|e^{l}\|_{2}^{2}+\widehat{C}_{1}k[C_{\frac{1}{2}}^{2}k^{4-2\overline{\beta}}+
      (k+k^{2}+h^{4})^{2}].
     \end{equation*}
     This implies
     \begin{equation}\label{61}
      \underset{0\leq l\leq n+\frac{1}{2}}{\max}\|e^{l}\|_{2}^{2}\leq \underset{0\leq l\leq n}{\max}\|e^{l}\|_{2}^{2}+
      \widehat{C}_{1}k[C_{\frac{1}{2}}^{2}k^{4-2\overline{\beta}}+(k+k^{2}+h^{4})^{2}]\leq\underset{0\leq l\leq n}{\max}\|e^{l}\|_{2}^{2}+
      \widehat{C}_{2}k(k+k^{2-\overline{\beta}}+k^{2}+h^{4})^{2},
     \end{equation}
     where $\widehat{C}_{2}=\widehat{C}_{1}(1+C_{\frac{1}{2}}^{2})$.\\

      In a similar way, a combination of equations $(\ref{32})$, $(\ref{34})$ and $(\ref{44})$ gives
      \begin{equation*}
       e^{n+1}_{j}-e^{n+\frac{1}{2}}-\frac{1+4\alpha}{4}kL_{h}e^{n+1}_{j}+\frac{k}{4}\left[\underset{l=0}{\overset{n+\frac{1}{2}}\sum}
       \theta_{l+\frac{1}{2}}^{n+1+\alpha}
       \delta_{t}e_{j}^{l}+\underset{l=0}{\overset{n}\sum}\theta_{l+\frac{1}{2}}^{n+\frac{1}{2}+\alpha}\delta_{t}e_{j}^{l}\right]
       =\frac{1-4\alpha}{4}kL_{h}e^{n+\frac{1}{2}}-
      \end{equation*}
      \begin{equation}\label{62}
      \frac{k}{4}(J_{j}^{\overline{\beta}}+I_{j}^{\overline{\beta}})+O(k^{2}+k^{3}+kh^{4}).
      \end{equation}
      Multiplying both sides of this equation by $2he^{n+1}_{j}$, summing this up from $j=2,3,...,M-2$, and using the definition of the $L^{2}$-norm
      and scalar product, it is not hard to observe that $(\ref{62})$ implies
       \begin{equation*}
      \|e^{n+1}\|_{2}^{2}+\|e^{n+1}-e^{n+\frac{1}{2}}\|_{2}^{2}-\|e^{n+\frac{1}{2}}\|_{2}^{2}+\frac{1+4\alpha}{2} k\left(-L_{h}e^{n+1},e^{n+1}\right)
      +\frac{k}{2}h\underset{j=2}{\overset{M-2}\sum}\left[e^{n+1}\underset{l=0}{\overset{n+\frac{1}{2}}\sum}\theta_{l+\frac{1}{2}}^{n+1+\alpha}
      \delta_{t}e_{j}^{l}+\right.
      \end{equation*}
      \begin{equation}\label{63}
      \left.e^{n+1}\underset{l=0}{\overset{n}\sum}\theta_{l+\frac{1}{2}}^{n+\frac{1}{2}+\alpha}\delta_{t}e_{j}^{l}\right]
       =\frac{1-4\alpha}{2}k\left(L_{h}e^{n+\frac{1}{2}},e^{n+1}\right)-\frac{k}{2}\left(J^{\overline{\beta}}+
      I^{\overline{\beta}},e^{n+1}\right)+\left(\overline{O}(k^{2}+k^{3}+kh^{4}),e^{n+1}\right),
      \end{equation}
     where $\overline{O}(k^{2}+k^{3}+kh^{4})=(O(k^{2}+k^{3}+kh^{4}),...,O(k^{2}+k^{3}+kh^{4}))$, $J^{\overline{\beta}}=
     (J^{\overline{\beta}}_{2},...,J^{\overline{\beta}}_{M-2})$ and $I^{\overline{\beta}}=(I^{\overline{\beta}}_{2},...,I^{\overline{\beta}}_{M-2})$.
     Utilizing the generalized sequence defined by equation $(\ref{64})$, relation $(\ref{63})$ becomes
     \begin{equation*}
      \|e^{n+1}\|_{2}^{2}+\|e^{n+1}-e^{n+\frac{1}{2}}\|_{2}^{2}-\|e^{n+\frac{1}{2}}\|_{2}^{2}+\frac{1+4\alpha}{2} k\left(-L_{h}e^{n+1},e^{n+1}\right)
      +\frac{k}{2}h\underset{j=2}{\overset{M-2}\sum}\left[e^{n+1}\underset{l=0}{\overset{n+\frac{1}{2}}\sum}\theta_{l+\frac{1}{2}}^{n+1+\alpha}
      \delta_{t}e_{j}^{l}+\right.
      \end{equation*}
      \begin{equation}\label{65a}
      \left.e^{n+1}\underset{l=0}{\overset{n}\sum}\widehat{\theta}_{l+\frac{1}{2}}^{n+\frac{1}{2}+\alpha}\delta_{t}e_{j}^{l}\right]
       =\frac{1-4\alpha}{2}k\left(L_{h}e^{n+\frac{1}{2}},e^{n+1}\right)-\frac{k}{2}\left(J^{\overline{\beta}}+
      I^{\overline{\beta}},e^{n+1}\right)+\left(\overline{O}(k^{2}+k^{3}+kh^{4}),e^{n+1}\right).
      \end{equation}
      For $s=2^{-1}$, using estimate $(\ref{65})$ and performing direct calculations, it holds
       \begin{equation*}
      e^{n+1}\underset{l=0}{\overset{n}\sum}\widehat{\theta}_{l+\frac{1}{2}}^{n+\frac{1}{2}+\alpha}\delta_{t}e_{j}^{l}=e^{n+1}
      \underset{l=0}{\overset{n+\frac{1}{2}}\sum}\widehat{\theta}_{l+\frac{1}{2}}^{n+\frac{1}{2}+\alpha}\delta_{t}e_{j}^{l}-
      e_{j}^{n+1}\widehat{\theta}_{n+1}^{n+\frac{1}{2}+\alpha}\delta_{t}e_{j}^{n+\frac{1}{2}}\geq \frac{1}{2}
      \underset{l=0}{\overset{n+\frac{1}{2}}\sum}\widehat{\theta}_{l+\frac{1}{2}}^{n+\frac{1}{2}+\alpha}\delta_{t}(e_{j}^{l})^{2}-
      \end{equation*}
      \begin{equation}\label{66}
       \frac{1}{k}\widehat{\theta}_{n+1}^{n+\frac{1}{2}+\alpha}\left[(e_{j}^{n+1})^{2}+(e_{j}^{n+1}-e_{j}^{n+\frac{1}{2}})^{2}-
       (e_{j}^{n+\frac{1}{2}})^{2}\right].
      \end{equation}
      Estimate $(\ref{66})$ combined with $(\ref{65a})$ and Lemmas $\ref{l3}$ and $\ref{l6}$ yield
      \begin{equation*}
      \left(1-\frac{1}{2}\widehat{\theta}_{n+1}^{n+\frac{1}{2}+\alpha}\right)\|e^{n+1}\|_{2}^{2}+
      \left(1-\frac{1}{2}\widehat{\theta}_{n+1}^{n+\frac{1}{2}+\alpha}\right)\|e^{n+1}-e^{n+\frac{1}{2}}\|_{2}^{2}
      +\frac{1+4\alpha}{4} k\|\delta_{x}e^{n+1}\|_{2}^{2}+
      \end{equation*}
       \begin{equation*}
      \frac{1}{2}\underset{l=0}{\overset{n+\frac{1}{2}}\sum}\left(\theta_{l+\frac{1}{2}}^{n+1+\alpha}+
      \widehat{\theta}_{l+\frac{1}{2}}^{n+\frac{1}{2}+\alpha}\right)\left(\|e^{l+\frac{1}{2}}\|_{2}^{2}-\|e^{l}\|_{2}^{2}\right)
       \leq\left(1-\frac{1}{2}\widehat{\theta}_{n+1}^{n+\frac{1}{2}+\alpha}\right)\|e^{n+\frac{1}{2}}\|_{2}^{2}+
      \end{equation*}
      \begin{equation*}
      \frac{1-4\alpha}{2}k\left(L_{h}e^{n+\frac{1}{2}},e^{n+1}\right)-\frac{k}{2}\left(J^{\overline{\beta}}+ I^{\overline{\beta}},e^{n+1}\right)
      +\left(\overline{O}(k^{2}+k^{3}+kh^{4}),e^{n+1}\right).
      \end{equation*}
      Applying the summation by parts and rearranging terms, this becomes
       \begin{equation*}
      \left(1+\frac{1}{2}\theta_{n+1}^{n+1+\alpha}\right)\|e^{n+1}\|_{2}^{2}+
      \left(1-\frac{1}{2}\widehat{\theta}_{n+1}^{n+\frac{1}{2}+\alpha}\right)\|e^{n+1}-e^{n+\frac{1}{2}}\|_{2}^{2}
      +\frac{1+4\alpha}{4} k\|\delta_{x}e^{n+1}\|_{2}^{2}+
      \end{equation*}
       \begin{equation*}
      \frac{1}{2}\underset{l=0}{\overset{n}\sum}\left[\left(\theta_{l+\frac{1}{2}}^{n+1+\alpha}+\widehat{\theta}_{l+\frac{1}{2}}^{n+\frac{1}{2}
       +\alpha}\right)-\left(\theta_{l+1}^{n+1+\alpha}+\widehat{\theta}_{l+1}^{n+\frac{1}{2}+\alpha}\right)\right]\|e^{l+\frac{1}{2}}\|_{2}^{2}
       \leq \frac{1}{2}\left(\theta_{\frac{1}{2}}^{n+1+\alpha}+\widehat{\theta}_{\frac{1}{2}}^{n+\frac{1}{2}+\alpha}\right)\|e^{0}\|_{2}^{2}+
      \end{equation*}
      \begin{equation}\label{67}
      \left(1-\frac{1}{2}\widehat{\theta}_{n+1}^{n+\frac{1}{2}+\alpha}\right)\|e^{n+\frac{1}{2}}\|_{2}^{2}+
      \frac{1-4\alpha}{2}k\left(L_{h}e^{n+\frac{1}{2}},e^{n+1}\right)-\frac{k}{2}\left(J^{\overline{\beta}}+ I^{\overline{\beta}},e^{n+1}\right)
      +\left(\overline{O}(k^{2}+k^{3}+kh^{4}),e^{n+1}\right).
      \end{equation}
      Performing direct computations, using the H\"{o}lder and Poincar\'{e}-Friedrich inequalities, equations $(\ref{9})$, $(\ref{17})$ and the
      second estimate in $(\ref{45})$, it is not difficult to show that
      \begin{equation*}
      \frac{1-4\alpha}{2}k\left(L_{h}e^{n+\frac{1}{2}},e^{n+1}\right)\leq \frac{|1-4\alpha|}{2}C_{p}k
      \|\delta_{x}e^{n+\frac{1}{2}}\|_{2}\|\delta_{x}e^{n+1}\|_{2}=
     2\left(\frac{|1-4\alpha|}{2}C_{p}\sqrt{\frac{3k}{1+4\alpha}}\|\delta_{x}e^{n+\frac{1}{2}}\|_{2}\right)
      \end{equation*}
     \begin{equation}\label{68}
     \left(\sqrt{\frac{k(1+4\alpha)}{12}}\|\delta_{x}e^{n+1}\|_{2}\right)\leq\frac{3(1-4\alpha)^{2}}{4(1+4\alpha)}C_{p}^{2}
     k\|\delta_{x}e^{n+\frac{1}{2}}\|_{2}^{2}+\frac{(1+4\alpha)k}{12}\|\delta_{x}e^{n+1}\|_{2}^{2},
     \end{equation}
    \begin{equation}\label{69}
     -k\left(J^{\overline{\beta}}+I^{\overline{\beta}},e^{n+1}\right)\leq\frac{C_{p}}{2}k\|J^{\overline{\beta}}+I^{\overline{\beta}}\|_{2}^{2}
     \|\delta_{x}e^{n+1}\|_{2}^{2}\leq \frac{3C_{p}^{2}}{2(1+4\alpha)}k
     (\|J^{\overline{\beta}}\|_{2}^{2}+\|I^{\overline{\beta}}\|_{2}^{2})+\frac{(1+4\alpha)k}{12}\|\delta_{x}e^{n+1}\|_{2}^{2},
     \end{equation}
     \begin{equation}\label{70}
     \left(\overline{O}(k^{2}+k^{3}+kh^{4}),e^{n+1}\right)=k\left(\overline{O}(k+k^{2}+h^{4}),e^{n+1}\right)\leq \frac{(1+4\alpha) k}{12}
     \|\delta_{x}e^{n+1}\|_{2}^{2}+C_{2}k(k+k^{2}+h^{4})^{2},
     \end{equation}
     where $C_{2}>0$, is a constant independent of the time step $k$ and mesh size $h$. A combination of estimates $(\ref{67})$-$(\ref{70})$ results in
     \begin{equation*}
      \left(1+\frac{1}{2}\theta_{n+1}^{n+1+\alpha}\right)\|e^{n+1}\|_{2}^{2}+
      \left(1-\frac{1}{2}\widehat{\theta}_{n+1}^{n+\frac{1}{2}+\alpha}\right)\|e^{n+1}-e^{n+\frac{1}{2}}\|_{2}^{2}\leq
      \frac{1}{2}\underset{l=0}{\overset{n}\sum}\left[\left(\theta_{l+1}^{n+1+\alpha}+\widehat{\theta}_{l+1}^{n+\frac{1}{2}+\alpha}\right)-\right.
      \end{equation*}
       \begin{equation*}
      \left.\left(\theta_{l+\frac{1}{2}}^{n+1+\alpha}+\widehat{\theta}_{l+\frac{1}{2}}^{n+\frac{1}{2}+\alpha}\right)\right]\|e^{l+\frac{1}{2}}\|_{2}^{2}
      +\frac{1}{2}\left(\theta_{\frac{1}{2}}^{n+1+\alpha}+\widehat{\theta}_{\frac{1}{2}}^{n+\frac{1}{2}+\alpha}\right)\|e^{0}\|_{2}^{2}+
      \left(1-\frac{1}{2}\widehat{\theta}_{n+1}^{n+\frac{1}{2}+\alpha}\right)\|e^{n+\frac{1}{2}}\|_{2}^{2}+
      \end{equation*}
      \begin{equation}\label{71}
      \frac{3(1-4\alpha)^{2}}{4(1+4\alpha)}C_{p}^{2}k\|\delta_{x}e^{n+\frac{1}{2}}\|_{2}^{2}+\frac{3C_{p}^{2}}{2(1+4\alpha)}k(\|J^{\overline{\beta}}
      \|_{2}^{2}+\|I^{\overline{\beta}}\|_{2}^{2})+C_{2}k(k+k^{2}+h^{4})^{2}.
      \end{equation}
      For small values of $k$, $1-\frac{1}{2}\widehat{\theta}_{n+1}^{n+\frac{1}{2}+\alpha}>0$. Utilizing this and $(\ref{36})$, relation
      $(\ref{71})$ becomes
      \begin{equation*}
      \left(1+\frac{1}{2}\theta_{n+1}^{n+1+\alpha}\right)\|e^{n+1}\|_{2}^{2}\leq \frac{1}{2}\left\{
      \left(\theta_{\frac{1}{2}}^{n+1+\alpha}+\widehat{\theta}_{\frac{1}{2}}^{n+\frac{1}{2}+\alpha}\right)+
      \underset{l=0}{\overset{n}\sum}\left[\left(\theta_{l+1}^{n+1+\alpha}+\widehat{\theta}_{l+1}^{n+\frac{1}{2}+\alpha}\right)-\right.\right.
      \end{equation*}
      \begin{equation*}
       \left.\left(\theta_{l+\frac{1}{2}}^{n+1+\alpha}+\widehat{\theta}_{l+\frac{1}{2}}^{n+\frac{1}{2}+\alpha}\right)\right]+
       \left.2-\widehat{\theta}_{n+1}^{n+\frac{1}{2}+\alpha}\right\}\underset{0\leq l\leq n+\frac{1}{2}}{\max}\|e^{l}\|_{2}^{2}+
       \frac{3(1-4\alpha)^{2}}{4(1+4\alpha)}C_{p}^{2}k\|\delta_{x}e^{n+\frac{1}{2}}\|_{2}^{2}+
      \end{equation*}
      \begin{equation*}
       \frac{3C_{p}^{2}}{1+4\alpha}(C_{\frac{1}{2}}^{2}+C_{1}^{2})k^{5-2\overline{\beta}}+C_{2}k(k+k^{2}+h^{4})^{2}.
      \end{equation*}
      Letting $\widehat{C}_{3,\alpha}=\max\left\{\frac{3C_{p}^{2}}{1+4\alpha}(C_{\frac{1}{2}}^{2}+C_{1}^{2}),C_{2}\right\}$, this estimate implies
      \begin{equation*}
      \left(1+\frac{1}{2}\theta_{n+1}^{n+1+\alpha}\right)\|e^{n+1}\|_{2}^{2}\leq
       \left(1+\frac{1}{2}\widehat{\theta}_{n+1}^{n+\frac{1}{2}+\alpha}\right)\underset{0\leq l\leq n+\frac{1}{2}}{\max}\|e^{l}\|_{2}^{2}+
       \frac{3(1-4\alpha)^{2}}{4(1+4\alpha)}C_{p}^{2}k\|\delta_{x}e^{n+\frac{1}{2}}\|_{2}^{2}+
      \end{equation*}
      \begin{equation}\label{72}
       \widehat{C}_{3,\alpha}k(k+k^{2-\overline{\beta}}+k^{2}+h^{4})^{2}.
      \end{equation}
       It follows from $(\ref{59})$ that $a_{n+\frac{1}{2},n+\frac{1}{2}}^{\alpha,\overline{\beta}}=a_{n+1,n+1}^{\alpha,\overline{\beta}}$.
      This fact, together with $(\ref{35a})$ and $(\ref{64})$ give $\theta_{n+1}^{n+1+\alpha}=\widehat{\theta}_{n+1}^{n+\frac{1}{2}+\alpha}$.
      Since, $\left(1+\frac{1}{2}\theta_{n+1}^{n+1+\alpha}\right)^{-1}<1$, multiplying both sides of $(\ref{72})$ by
      $\left(1+\frac{1}{2}\theta_{n+1}^{n+1+\alpha}\right)^{-1}$ to get
      \begin{equation}\label{73}
      \|e^{n+1}\|_{2}^{2}\leq \underset{0\leq l\leq n+\frac{1}{2}}{\max}\|e^{l}\|_{2}^{2}+\frac{3(1-4\alpha)^{2}}{4(1+4\alpha)}
       C_{p}^{2}k\|\delta_{x}e^{n+\frac{1}{2}}\|_{2}^{2}+\widehat{C}_{3,\alpha}k(k+k^{2-\overline{\beta}}+k^{2}+h^{4})^{2}.
      \end{equation}
      Taking the limit when $\alpha$ approaches $\frac{1}{4}$, $(\ref{73})$ provides
      \begin{equation*}
      \|e^{n+1}\|_{2}^{2}\leq\underset{0\leq l\leq n+\frac{1}{2}}{\max}\|e^{l}\|_{2}^{2}+\widehat{C}_{3}k(k+k^{2-\overline{\beta}}+k^{2}+h^{4})^{2},
      \end{equation*}
      where $\widehat{C}_{3}=\underset{\alpha\rightarrow\frac{1}{4}}{\lim}\widehat{C}_{3,\alpha}=\underset{\alpha\rightarrow\frac{1}{4}}{\lim}
      \max\left\{\frac{3C_{p}^{2}}{1+4\alpha}(C_{\frac{1}{2}}^{2}+C_{1}^{2}),C_{2}\right\}=
      \max\left\{\frac{3C_{p}^{2}}{2}(C_{\frac{1}{2}}^{2}+C_{1}^{2}),C_{2}\right\}$. This estimate implies
       \begin{equation}\label{74}
      \underset{0\leq l\leq n+1}{\max}\|e^{l}\|_{2}^{2}\leq\underset{0\leq l\leq n+\frac{1}{2}}{\max}\|e^{l}\|_{2}^{2}+\widehat{C}_{3}
      k(k+k^{2-\overline{\beta}}+k^{2}+h^{4})^{2}.
      \end{equation}
      It is worth noticing to remind that the summation index $"l"$ varies in the range: $l=0,\frac{1}{2},1,...,n+\frac{1}{2},n+1$. Now, setting
       \begin{equation}\label{75}
      Z^{q}=\underset{0\leq l\leq q}{\max}\|e^{l}\|_{2}^{2}\text{\,\,\,and\,\,\,}\widehat{C}_{4}=\max\{\widehat{C}_{2},\widehat{C}_{3}\},
      \end{equation}
      estimates $(\ref{61})$ and $(\ref{74})$ can be rewritten as
      \begin{equation*}
      Z^{n+\frac{1}{2}}\leq Z^{n}+\widehat{C}_{4}k(k+k^{2-\overline{\beta}}+k^{2}+h^{4})^{2},\text{\,\,\,and\,\,\,}
      Z^{n+1}\leq Z^{n+\frac{1}{2}}+\widehat{C}_{4}k(k+k^{2-\overline{\beta}}+k^{2}+h^{4})^{2}.
      \end{equation*}
      Substituting the first estimate into the second one gives
       \begin{equation*}
       Z^{n+1}\leq Z^{n}+2\widehat{C}_{4}k(k+k^{2-\overline{\beta}}+k^{2}+h^{4})^{2}.
      \end{equation*}
      Summing this up from $n=0,1,2,...,N-1$, to obtain
      \begin{equation}\label{78}
       Z^{N}\leq Z^{0}+2\widehat{C}_{4}Nk(k+k^{2-\overline{\beta}}+k^{2}+h^{4})^{2}.
      \end{equation}
      But, it comes from the initial condition $(\ref{s5})$ that $e_{j}^{0}=u_{j}^{0}-U_{j}^{0}=0$, for $j=0,1,2,...,M$. Furthermore, since
      $k=\frac{T}{N}$ so, $Nk=T$. These facts together with $(\ref{75})$ and $(\ref{78})$ yield
      \begin{equation*}
      \underset{0\leq l\leq N}{\max}\|e^{l}\|_{2}^{2}\leq \|e^{0}\|_{2}^{2}+2\widehat{C}_{4}T(k+k^{2-\overline{\beta}}+k^{2}+h^{4})^{2}.
      \end{equation*}
      This is equivalent to
      \begin{equation*}
      \underset{0\leq n\leq N-1}{\max}\|e^{n+\frac{1}{2}}\|_{2}^{2},\text{\,\,\,\,}\underset{0\leq n\leq N}{\max}\|e^{n}\|_{2}^{2}\leq
       2\widehat{C}_{4}T(k+k^{2-\overline{\beta}}+k^{2}+h^{4})^{2}.
      \end{equation*}
      Taking the square root to obtain
      \begin{equation*}
      \underset{0\leq n\leq N-1}{\max}\|e^{n+\frac{1}{2}}\|_{2},\text{\,\,\,\,}\underset{0\leq n\leq N}{\max}\|e^{n}\|_{2}\leq
      \sqrt{2\widehat{C}_{4}T}(k+k^{2-\overline{\beta}}+k^{2}+h^{4}).
      \end{equation*}
      These estimates imply
      \begin{equation}\label{79}
      \|e^{n+\frac{1}{2}}\|_{2},\text{\,\,\,\,}\|e^{n}\|_{2}\leq \sqrt{2\widehat{C}_{4}T}(k+k^{2-\overline{\beta}}+k^{2}+h^{4}),
      \end{equation}
      for $n=0,1,2,...,N-1$ (resp., $N$). But $|\|u^{q}\|_{2}-\|U^{q}\|_{2}|\leq \|u^{q}-U^{q}\|_{2}=\|e^{q}\|_{2}$, thus
      \begin{equation*}
      \|U^{n+\frac{1}{2}}\|_{2},\text{\,\,\,\,}\|U^{n}\|_{2}\leq \||u|\|_{\infty,2}+\sqrt{2\widehat{C}_{4}T}(k+k^{2-\overline{\beta}}+k^{2}+h^{4}),
      \end{equation*}
      for $n=0,1,2,...,N-1$ (resp., $N$), which imply
      \begin{equation*}
      \underset{0\leq n\leq N-1}{\max}\|U^{n+\frac{1}{2}}\|_{2},\text{\,\,\,\,}\underset{0\leq n\leq N}{\max}\|U^{n}\|_{2}\leq \||u|\|_{\infty,2}
      + \sqrt{2\widehat{C}_{4}T}(k+k^{2-\overline{\beta}}+k^{2}+h^{4}).
      \end{equation*}
      This ends the proof of estimate $(\ref{50})$ in Theorem $\ref{t}$. Now, since $k<1$ and $2-\overline{\beta}>1$, so $k^{2-\overline{\beta}},k^{2}
      \leq k$, thus $k+k^{2-\overline{\beta}}+k^{2}+h^{4}\leq3(k+h^{4})$. Using this, $(\ref{79})$ implies
       \begin{equation*}
      \underset{0\leq n\leq N-1}{\max}\|e^{n+\frac{1}{2}}\|_{2},\text{\,\,\,\,}\underset{0\leq n\leq N}{\max}\|e^{n}\|_{2}\leq
       3\sqrt{2\widehat{C}_{4}T}(k+h^{4}).
      \end{equation*}
      This completes the proof of Theorem $\ref{t}$.
   \end{proof}

   \section{Numerical experiments and Convergence rate}\label{sec4}
   This section considers some computational results to show the unconditional stability and the convergence order of the new approach
    $(\ref{s1})$-$(\ref{s5})$ applied to time-variable fractional mobile-immobile advection-dispersion equation $(\ref{1})$ subjects to initial and
    boundary value conditions $(\ref{2})$ and $(\ref{3})$, respectively. To demonstrate the efficiency and accuracy of the proposed algorithm, two
    examples are taken in \cite{27mc}. We set $k=h^{4}$, where $h\in\{2^{-i},\text{\,\,\,}i=1,2,3,4\}$ so, $k=2^{-4r}$, $r=1,2,3,4$, and we compute
    the $L^{\infty}$-norm of the numerical solution: $U^{n}$, the exact one: $u^{n}$, and the corresponding error: $e^{n}$, at time level $n$, using
    the following formulas
   \begin{equation*}
        \||U|\|_{\infty,2}=\underset{0\leq n\leq N}{\max}\left(h\underset{j=2}{\overset{M-2}\sum}|U_{j}^{n}|^{2}\right)^{\frac{1}{2}},\text{\,\,\,}
        \||u|\|_{\infty,2}=\underset{0\leq n\leq N}{\max}\left(h\underset{j=2}{\overset{M-2}\sum}|u_{j}^{n}|^{2}\right)^{\frac{1}{2}},
       \end{equation*}
     and
     \begin{equation*}
       \||e(h)|\|_{\infty,2}=\underset{0\leq n\leq N}{\max}\left(h\underset{j=2}{\overset{M-2}\sum}|u_{j}^{n}-U_{j}^{n}|^{2}\right)^{\frac{1}{2}}.
       \end{equation*}
       In each example, the numerical evidences are performed with two different order functions: $\beta_{1}(x,t)=1-2^{-1}e^{-xt}$ and
      $\beta_{2}(x,t)=\frac{4}{3}-5.10^{-3}\cos(xt)\sin(xt)$. Furthermore, the convergence rate $R(k,h)$ of the new algorithm is estimate using the
       formula
       \begin{equation*}
        R(k,h)=\log_{2}(\||e(2h)|\|_{\infty,2}/\||e(h)|\|_{\infty,2}),
       \end{equation*}
       where we set $k=h^{4}$. Finally, the numerical computations are carried out by the use of MATLAB R$2013b$.\\

    Figures $\ref{figure1}$-$\ref{figure4}$ suggest that the proposed two-step technique $(\ref{s1})$-$(\ref{s5})$ is unconditionally stable whereas
    \textbf{Tables} $1$-$4$ indicate that the developed numerical method is temporal first-order accurate and spatial fourth-order convergent.
    These numerical studies confirm the theoretical results provided in Section $\ref{sec3}$, Theorem $\ref{t}$.\\

   $\bullet$ \textbf{Example $1.$} Let $D=[0,1]\times[0,1]$ be the domain. The parameter $\alpha=0.25$, $0.49$ and the function $\beta$ is defined as
   $\beta(x,t)=1-2^{-1}e^{-xt}$. Consider the following time-variable fractional mobile-immobile defined in \cite{27mc} by
   \begin{equation*}
    \left\{
      \begin{array}{ll}
        u_{t}(x,t)+cD_{0t}^{\beta(x,t)}u(x,t)=-u_{x}(x,t)+u_{2x}(x,t)+f(x,t) & \hbox{on\text{\,\,\,\,}D,} \\
       \text{\,}\\
        u(x,0)=u_{0}(x)=10x^{2}(1-x)^{2} & \hbox{\text{\,\,\,for\,\,\,}$0\leq x\leq1$,} \\
        \text{\,}\\
        u(0,t)=u(1,t)=0 & \hbox{\text{\,\,\,for\,\,\,}$0\leq t\leq1$,} \\
      \end{array}
    \right.
   \end{equation*}
     where $f(x,t)=10x^{2}(1-x)^{2}+\frac{10x^{2}(1-x)^{2}t^{1-\beta(x,t)}}{\Gamma(2-\beta(x,t))}+10(1+t)(2x-6x^{2}+4x^{3})-10(1+t)(2-12x+12x^{2})$.
    The analytical solution $u$ is given by
      \begin{equation*}
     u(x,t)=10(1+t)x^{2}(1-x)^{2}.
     \end{equation*}

       \textbf{Table 1} $\label{T1}$. Stability and convergence rate $R(k,h)$ of the two-step fourth-order approach with varying
      space step $h$ and time step $k$. We take $\alpha=0.25$, $\beta(x,t)=1-2^{-1}e^{-xt}$ and $k=h^{4}$.
          \begin{equation*}
           \begin{tabular}{|c|c|c|c|c|}
            \hline
            % after \\: \hline or \cline{col1-col2} \cline{col3-col4} ...
            $k$ & $\||u|\|_{\infty,2}$ &$\||U|\|_{\infty,2}$ & $\||E(h)|\|_{\infty,2}$ & R(k,h)\\
            \hline
            $h^{-1}$ & $6.1049\times10^{-1}$ & $6.011\times10^{-1}$  &  $6.4483\times10^{-2}$ &   --  \\
            \hline
            $h^{-2}$ & $3.9871\times10^{-1}$ & $3.9279\times10^{-1}$  & $5.4102\times10^{-3}$ & 3.5671 \\
            \hline
            $h^{-3}$ & $2.8163\times10^{-1}$ & $2.7747\times10^{-1}$  &  $4.2155\times10^{-4}$ & 3.6819 \\
            \hline
            $h^{-4}$ & $1.9920\times10^{-1}$ & $1.9629\times10^{-1}$   &  $2.6665\times10^{-5}$ & 3.9827 \\
           \hline
          \end{tabular}
            \end{equation*}

         \textbf{Table 2} $\label{T2}$. Stability and Convergence rate $R(k,h),$ of the new technique with varying spacing $h$ and
          time step $k$. Here we take $\alpha=0.49$, $\beta(x,t)=1-2^{-1}e^{-xt}$ and $k=h^{4}.$
         \begin{equation*}
         \begin{tabular}{|c|c|c|c|c|}
            \hline
            % after \\: \hline or \cline{col1-col2} \cline{col3-col4} ...
            $k$ & $\||u|\|_{\infty,2}$ &$\||U|\|_{\infty,2}$ & $\||E(h)|\|_{\infty,2}$ & R(k,h)\\
            \hline
            $h^{-1}$ & $6.1503\times10^{-1}$  & $6.0899\times10^{-1}$  &  $4.9235\times10^{-3}$ &   --  \\
            \hline
            $h^{-2}$ & $3.9891\times10^{-1}$  & $3.9796\times10^{-1}$  &  $3.3821\times10^{-4}$ & 3.8637 \\
            \hline
            $h^{-3}$ & $2.8176\times10^{-1}$  & $2.7889\times10^{-1}$  &  $1.9746\times10^{-5}$ & 4.0983\\
            \hline
            $h^{-4}$ &  $1.9925\times10^{-2}$ & $1.9727\times10^{-1}$  &  $1.1488\times10^{-6}$ & 4.1029 \\
           \hline
          \end{tabular}
            \end{equation*}

      $\bullet$ \textbf{Example $2.$} Let $D$ be the bounded domain $[0,1]\times[0,1]$. We assume that the parameters $\alpha\in\{0.25,0.49\}$ and the
     function $\beta$ is given by $\beta(x,t)=\frac{4}{3}-5.10^{-3}\cos(xt)\sin(xt)$. We consider the following time-variable fractional
     mobile-immobile advection-dispersion model defined in \cite{27mc} as
    \begin{equation*}
    \left\{
      \begin{array}{ll}
        u_{t}(x,t)+cD_{0t}^{\beta(x,t)}u(x,t)=-u_{x}(x,t)+u_{2x}(x,t)+f(x,t) & \hbox{on\text{\,\,\,\,}D,} \\
       \text{\,}\\
        u(x,0)=u_{0}(x)=5\sin(\pi x) & \hbox{\text{\,\,\,for\,\,\,}$0\leq x\leq1$,} \\
        \text{\,}\\
        u(0,t)=u(1,t)=0 & \hbox{\text{\,\,\,for\,\,\,}$0\leq t\leq1$,} \\
      \end{array}
    \right.
   \end{equation*}
     where $f(x,t)=5(1+\pi^{2}(1+t))\sin(\pi x)+\frac{5\sin(\pi x) t^{1-\beta(x,t)}}{\Gamma(2-\beta(x,t))}-5\pi(1+t)\cos(\pi x)$.
    The exact solution $u$ is defined as
      \begin{equation*}
     u(x,t)=5(1+t)x^{2}\sin(\pi x).
     \end{equation*}

       \textbf{Table 3} $\label{T3}$. Unconditional stability and convergence rate $R(k,h),$ for the two-level approach with varying time
      step $k$ and space step $h$. Here we take $\alpha=0.25$, $\beta(x,t)=\frac{4}{3}-5.10^{-3}\cos(xt)\sin(xt)$ and $k=h^{4}.$
      \begin{equation*}
            \begin{tabular}{|c|c|c|c|c|}
            \hline
            % after \\: \hline or \cline{col1-col2} \cline{col3-col4} ...
            $k$ & $\||u|\|_{\infty,2}$ &$\||U|\|_{\infty,2}$ & $\||E(h)|\|_{\infty,2}$ & R(k,h)\\
            \hline
            $h^{-1}$ & $1.3672$ & $1.3534$  &  $1.0918\times10^{-2}$ &   --  \\
            \hline
            $h^{-2}$ & $1.3248$ & $1.3115$  & $7.3334\times10^{-4}$ & 3.8937\\
            \hline
            $h^{-3}$ & $9.4892\times10^{-1}$ & $9.4356\times10^{-1}$  &  $4.5602\times10^{-5}$ & 4.0073\\
            \hline
           $h^{-4}$ & $6.7163\times10^{-1}$ & $6.6689\times10^{-1}$   &  $2.5995\times10^{-6}$ & 4.1328 \\
           \hline
          \end{tabular}
            \end{equation*}

            \textbf{Table 4} $\label{t4}$. Stability and accuracy $R(k,h),$ of the new technique with mesh size $h$ and time step $k$. In this example
         we set $\alpha=0.49$, $\beta(x,t)=\frac{4}{3}-5.10^{-3}\cos(xt)\sin(xt)$ and $k=h^{4}.$
          \begin{equation*}
            \begin{tabular}{|c|c|c|c|c|}
            \hline
            % after \\: \hline or \cline{col1-col2} \cline{col3-col4} ...
            $k$ & $\||u|\|_{\infty,2}$ &$\||U|\|_{\infty,2}$ & $\||E(h)|\|_{\infty,2}$ & R(k,h)\\
            \hline
            $h^{-1}$ & $1.5172$ & $1.5096$  & $7.6372\times10^{-3}$ &   --  \\
            \hline
            $h^{-2}$ & $1.4664$ & $1.4590$  & $4.8459\times10^{-4}$ & 3.9782 \\
            \hline
            $h^{-3}$ & $1.0502$ & $1.0449$  & $2.8188\times10^{-5}$ & 4.1036 \\
            \hline
           $h^{-4}$ & $7.4327\times10^{-1}$ & $7.3959\times10^{-1}$   &  $1.6242\times10^{-6}$ & 4.1173 \\
           \hline
          \end{tabular}
            \end{equation*}
          We observe from this table that the proposed method is temporal second order convergent and spatial fourth order accurate.

     \section{General conclusions and future works}\label{sec5}
     This paper has developed a two-step fourth-order modified explicit Euler/Crank-Nicolson formulation for solving the time-variable fractional
     mobile-immobile advection-dispersion model subjects to suitable initial and boundary value conditions. Both stability and error estimates of the
     new technique have been deeply analyzed in $L^{\infty}(0,T;L^{2})$-norm. The theory has shown that the proposed approach is unconditionally stable,
     first-order convergence in time and fourth-order accurate in space (see Theorem $\ref{t}$). This theoretical analysis is confirmed by two numerical
    examples. Especially, the graphs (Figures $\ref{figure1}$-$\ref{figure4}$) show that the new procedure is both unconditionally stable and convergent
    whereas \textbf{Tables 1-4} indicate the convergence rate of the developed algorithm (convergence with order $O(k)$ in time and fourth-order accurate
    in space). Furthermore, the theoretical and numerical studies suggested that the proposed numerical scheme $(\ref{s1})$-$(\ref{s5})$ is faster and
    efficient than a wide set of numerical methods \cite{27mc,28ll,29ll,30zjl,22zjl,10zjl} developed for the considered problem
    $(\ref{1})$-$(\ref{3})$. Solving the two-dimensional time-variable fractional problems by the use of a two-step fourth-order explicit/implicit
    scheme will be the topic of our future work. In addition, we will also be interested in the analysis of the Preconditioned Generalized Minimal
    Residual Method in an efficient solution of linear systems of equations $(\ref{s1})$-$(\ref{s5})$. Specifically, in view to construct efficient
    preconditioners for such systems of linear equations, our study will consider the spectral behavior of the sequences of coefficient matrices
    $\{A:=A_{M}\}_{M}$ and $\{A_{0}:=A_{0M}\}_{M}$ of size $(M-2)\times(M-2)$ in the sense of eigenvalues and clustering. For more details about the
    eigenvalues/singular values distribution and clustering, the readers can consult the works \cite{nss,enss,eng1,eng2} and references therein.\\

     \textbf{Acknowledgment.} This work has been partially supported by the Deanship of Scientific Research of Imam Mohammad Ibn Saud Islamic University
    (IMSIU) under the Grant No. $331203.$

    \newpage

          \begin{figure}
         \begin{center}
       Analysis of stability and convergence of a two-step Euler/Crank-Nicolson technique for time-variable fractional mobile-immobile with 
       $\alpha=0.25$ and $k=h^{4}.$
         \begin{tabular}{c c}
         \psfig{file=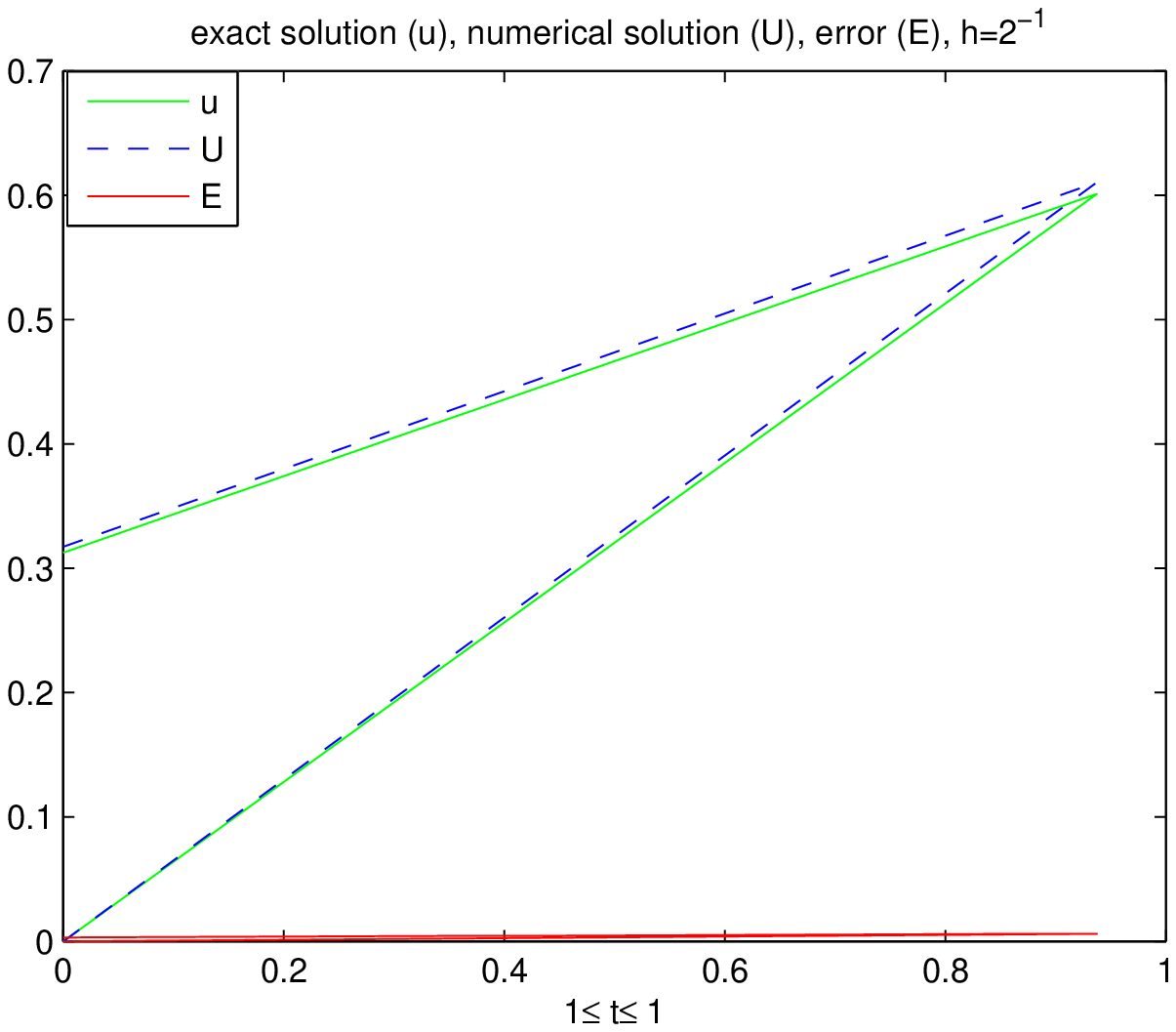,width=7cm} & \psfig{file=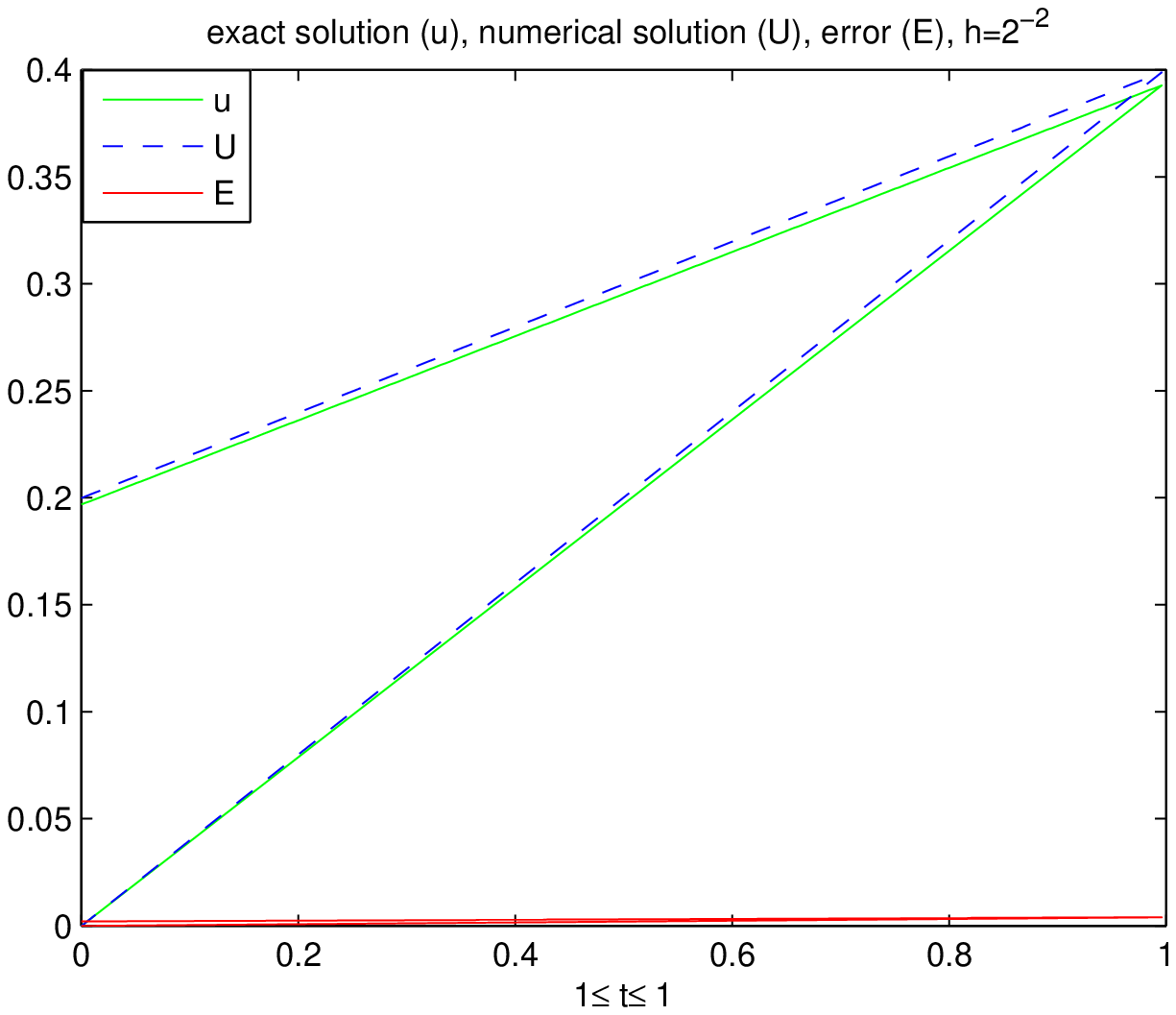,width=7cm}\\
         \psfig{file=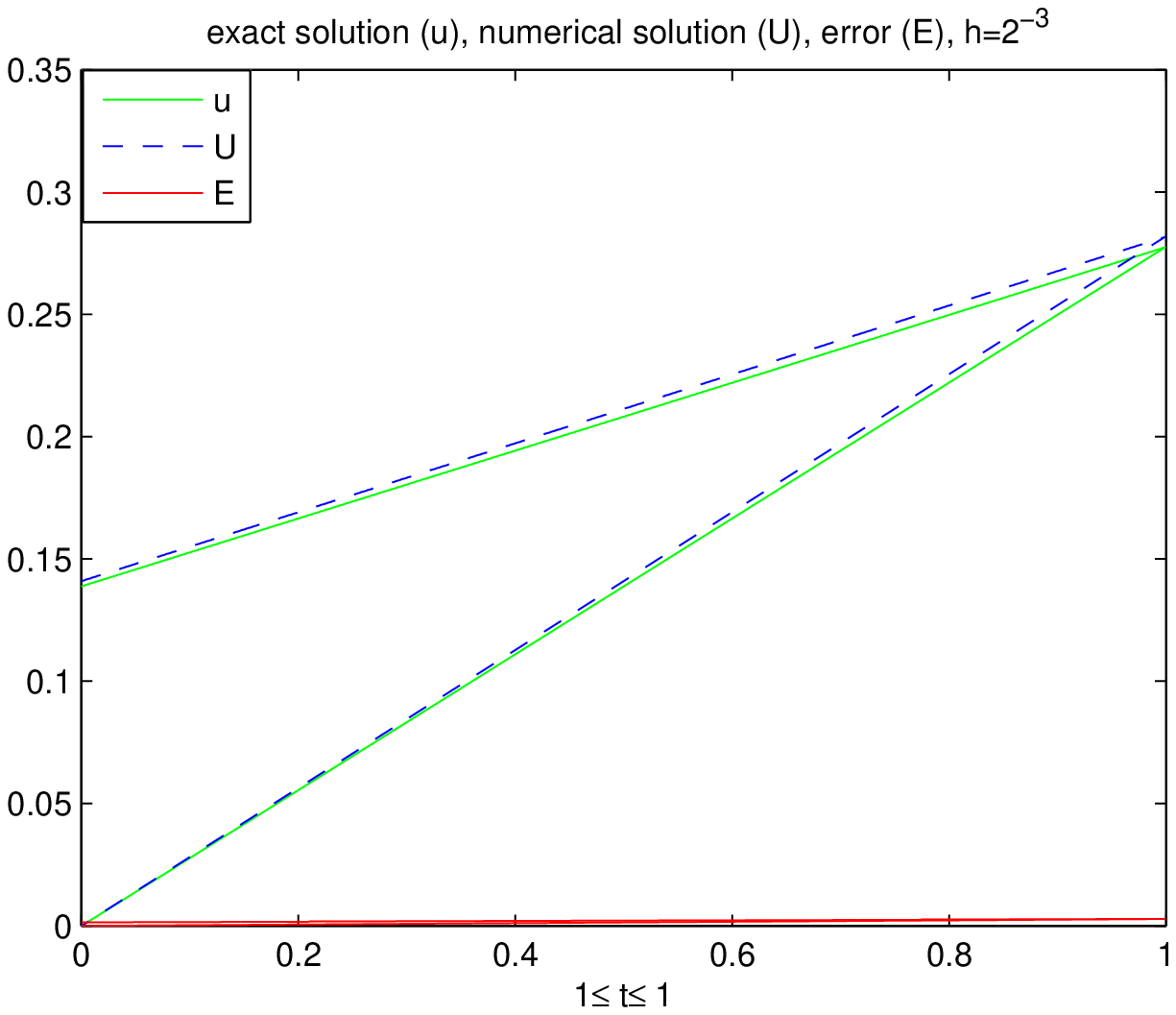,width=7cm} & \psfig{file=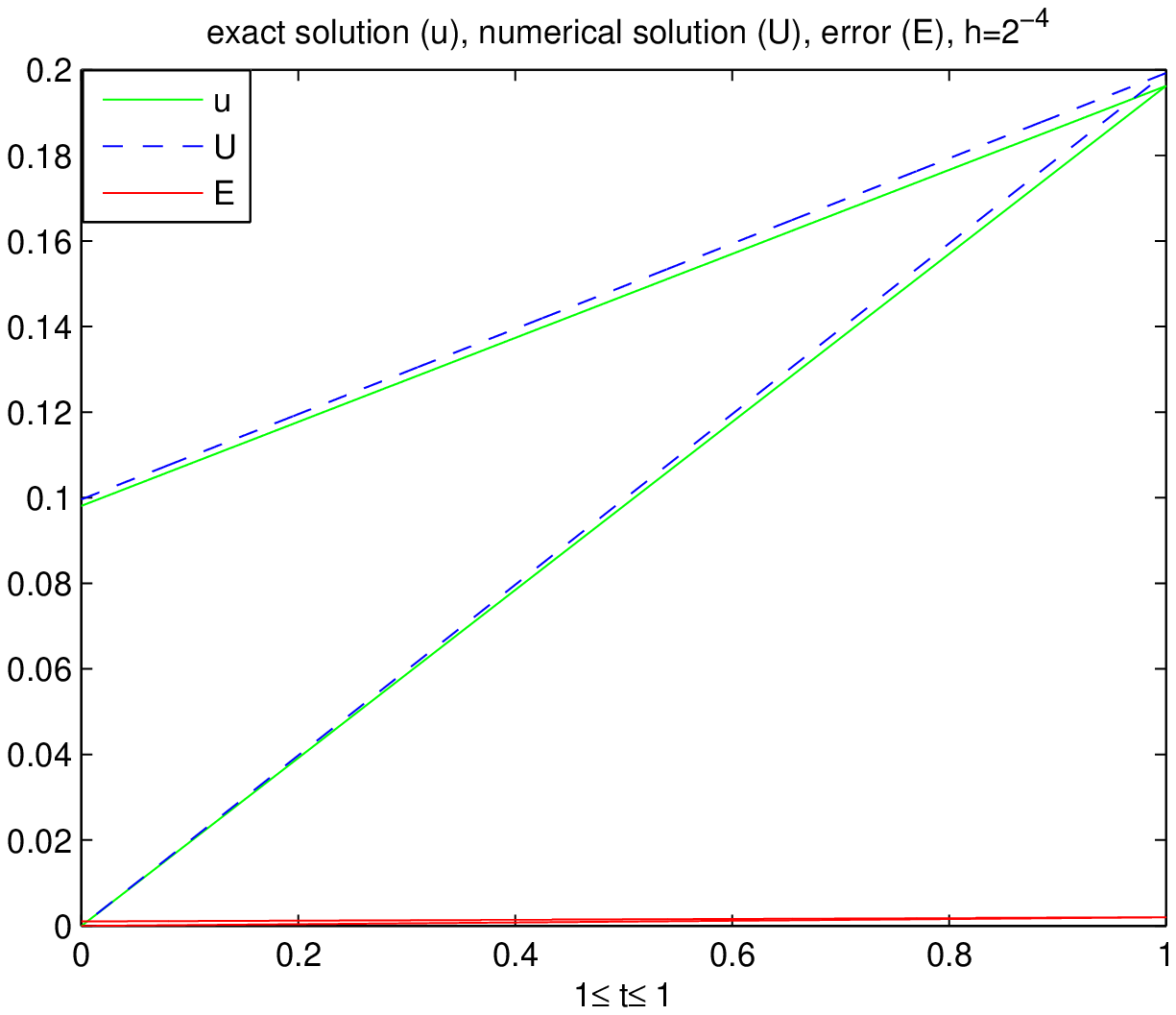,width=7cm}\\
         \end{tabular}
        \end{center}
        \caption{Exact solution (u: in green), Numerical solution (U: in blue) and Error (E: in red) for Problem 1}
        \label{figure1}
        \end{figure}

           \begin{figure}
         \begin{center}
         Stability and convergence of a two-step Euler/Crank-Nicolson approach for time-variable fractional mobile-immobile with $\alpha=0.49$ and
        $k=h^{4}.$
         \begin{tabular}{c c}
         \psfig{file=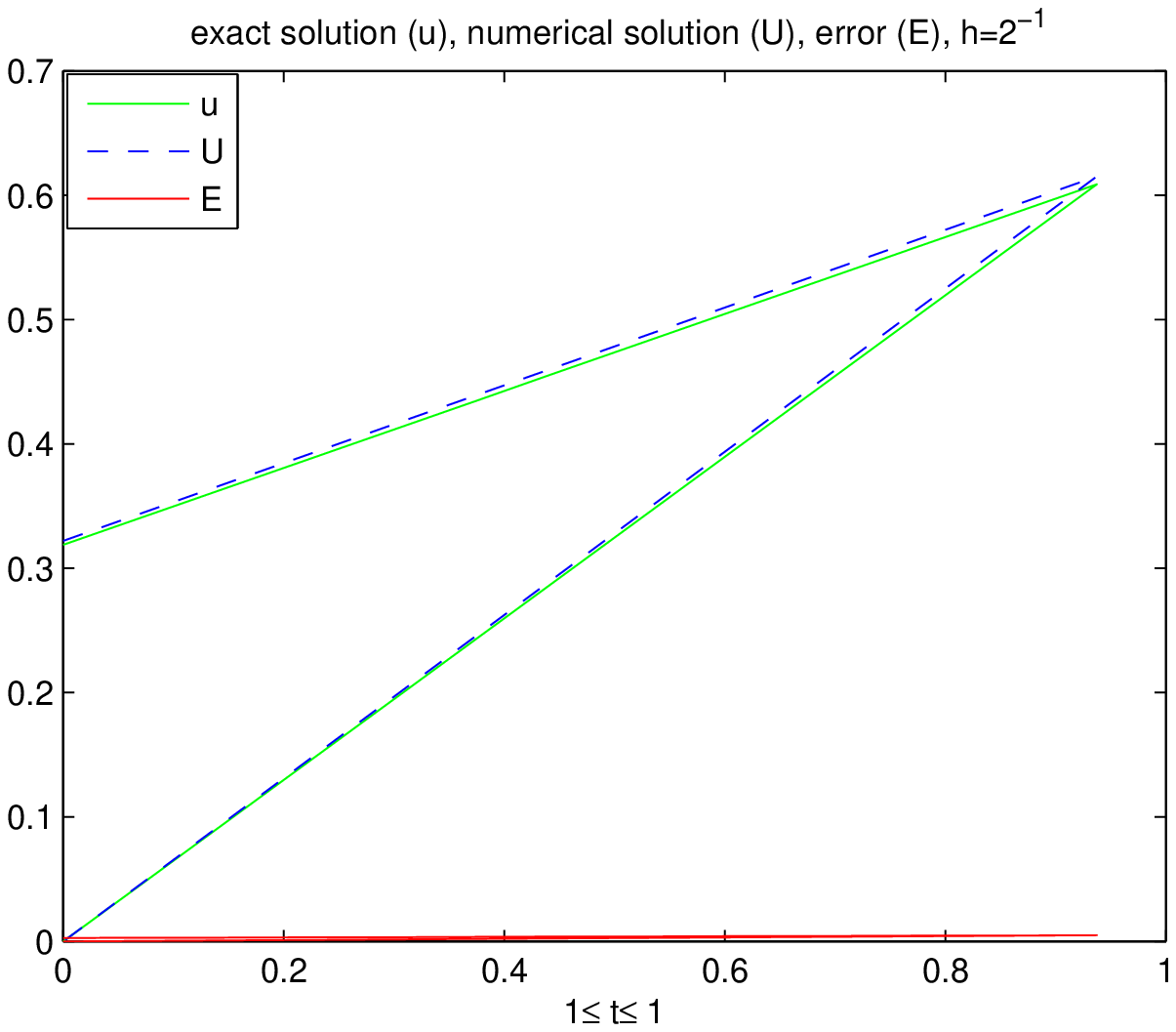,width=7cm} & \psfig{file=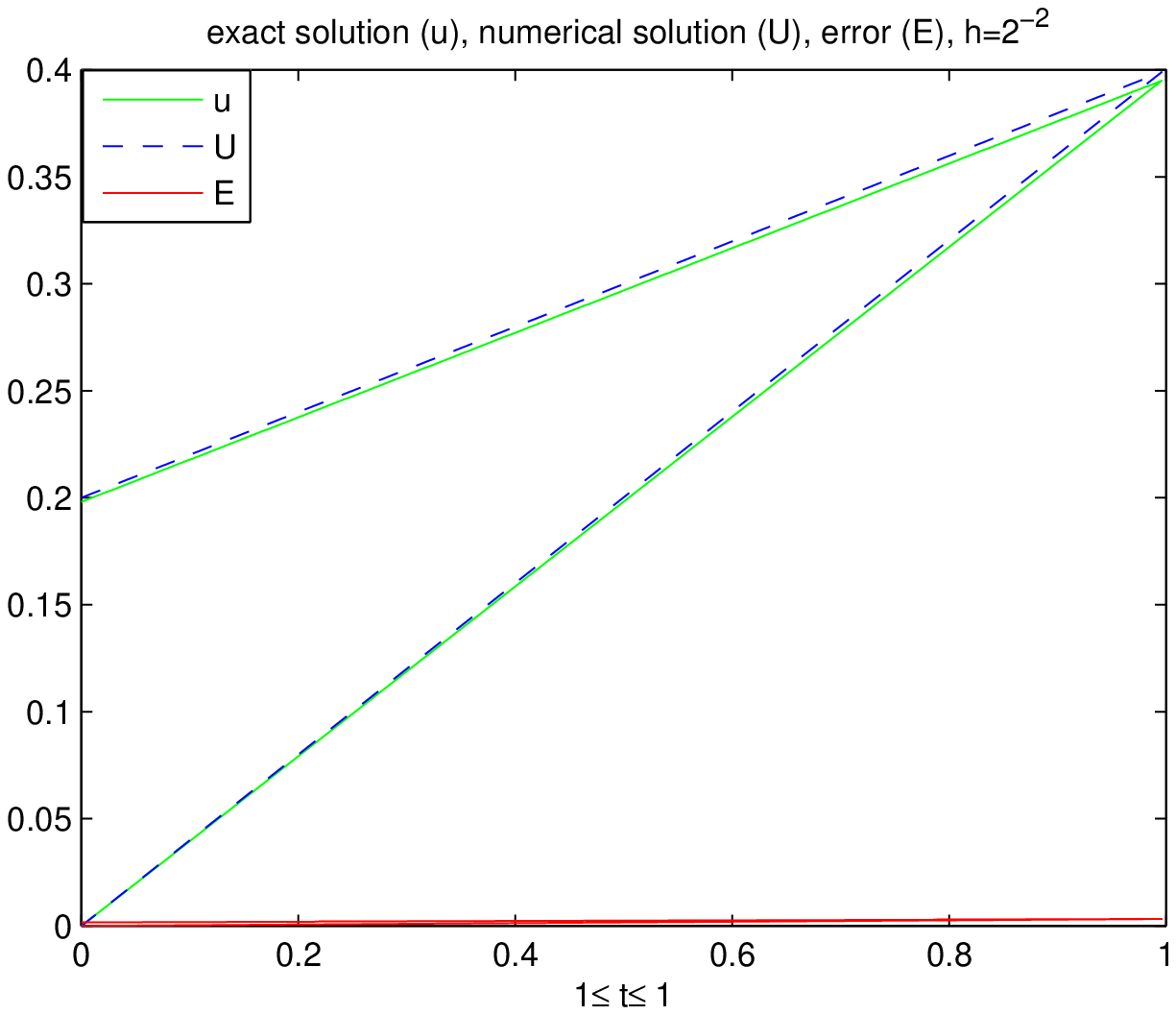,width=7cm}\\
         \psfig{file=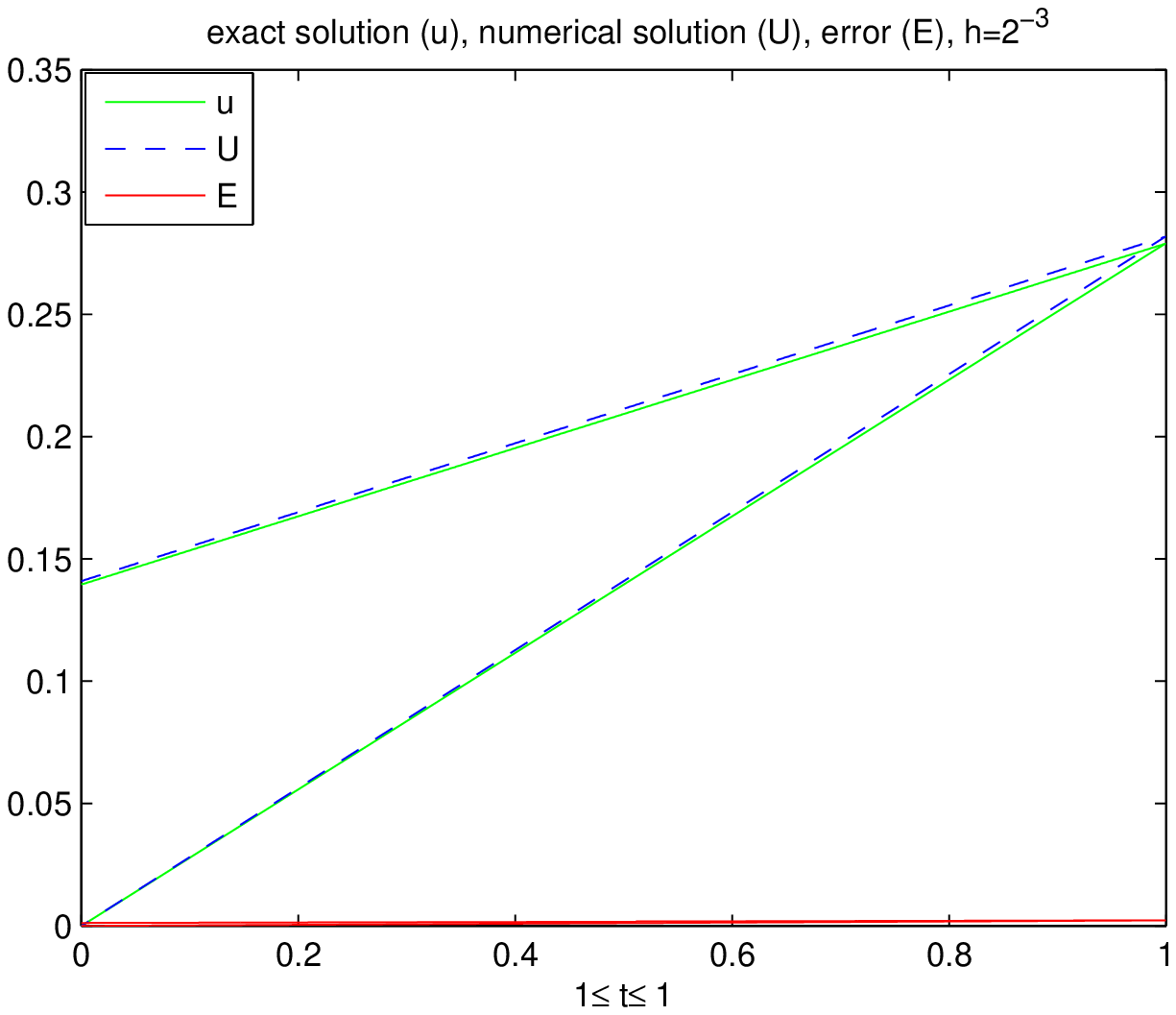,width=7cm} & \psfig{file=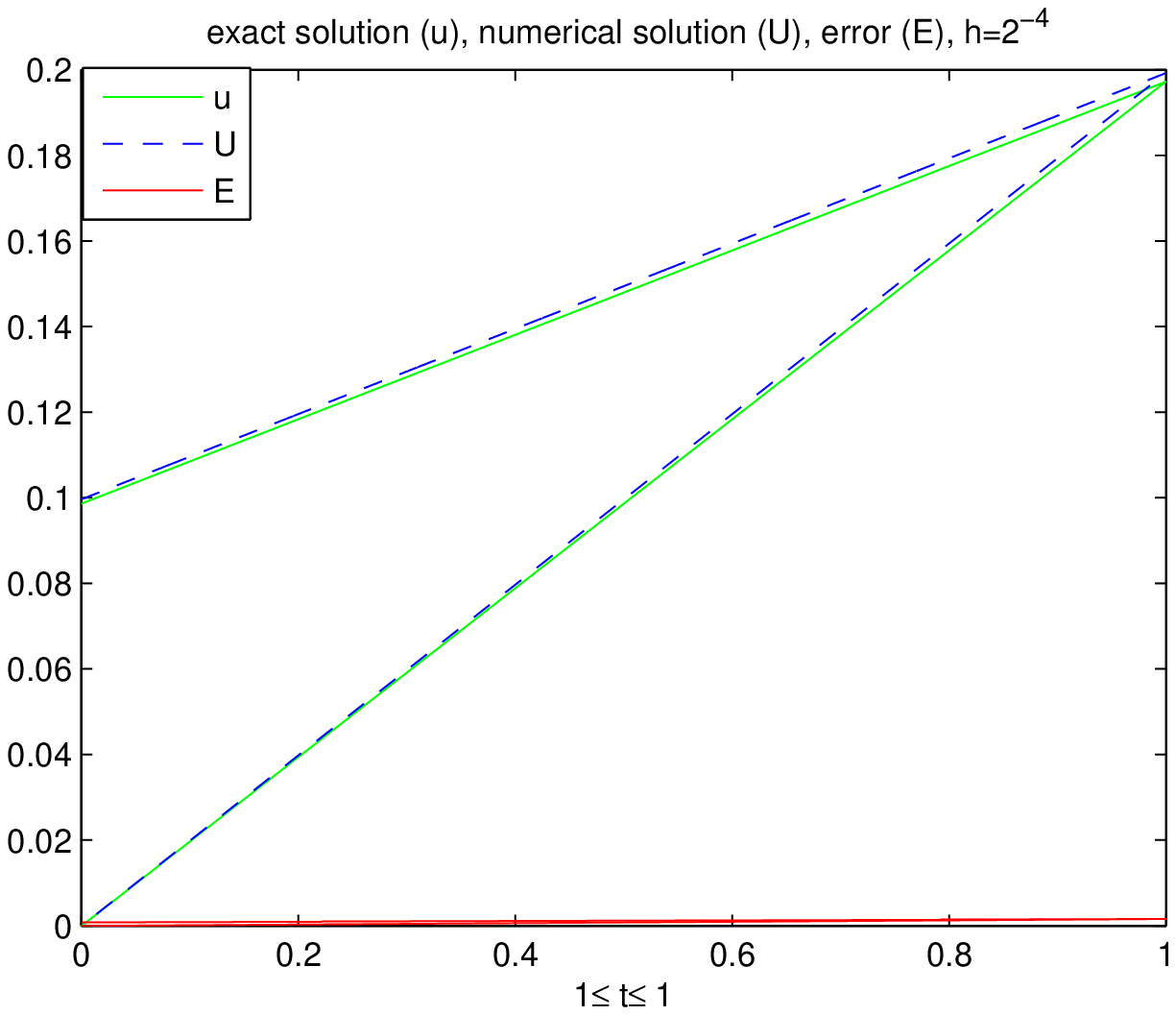,width=7cm}\\
         \end{tabular}
        \end{center}
        \caption{Exact solution (u: in green), Numerical solution (U: in blue) and Error (E: in red) for Problem 1}
        \label{figure2}
        \end{figure}

       \begin{figure}
         \begin{center}
          Stability and convergence of a two-step Euler/Crank-Nicolson approach for time-variable fractional mobile-immobile with $\lambda=0.25$ and
        $k=h^{4}.$
         \begin{tabular}{c c}
         \psfig{file=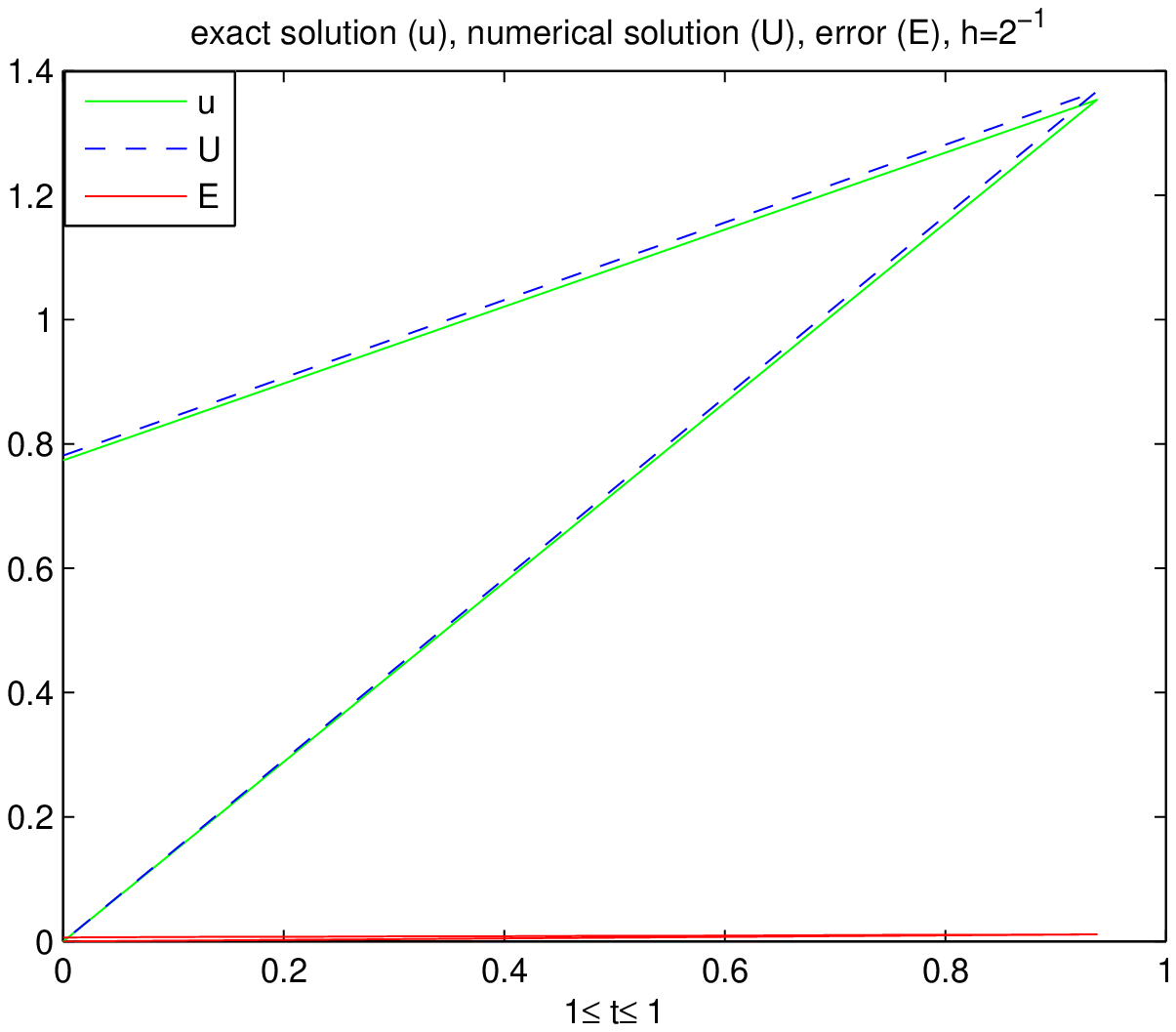,width=7cm} & \psfig{file=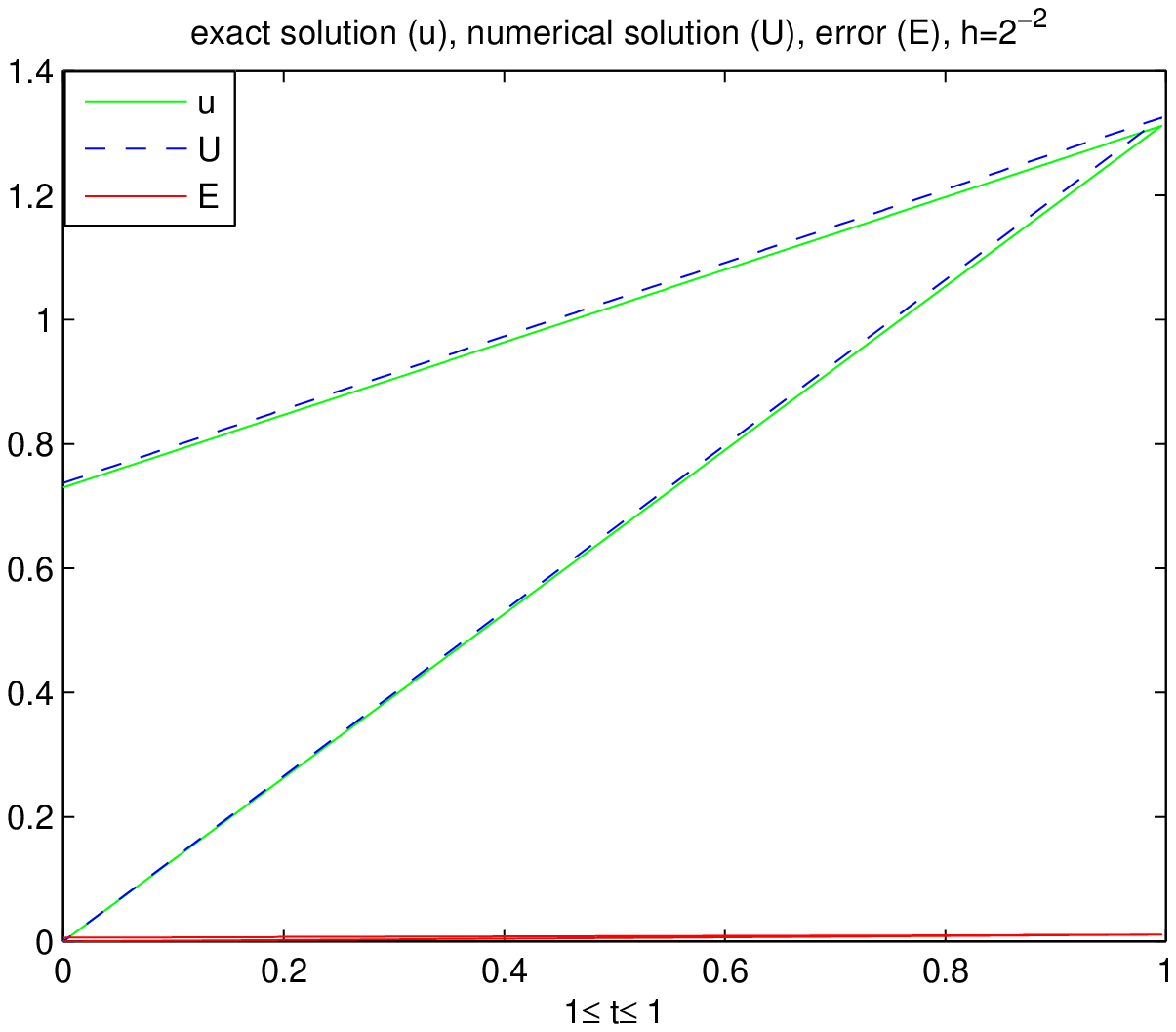,width=7cm}\\
         \psfig{file=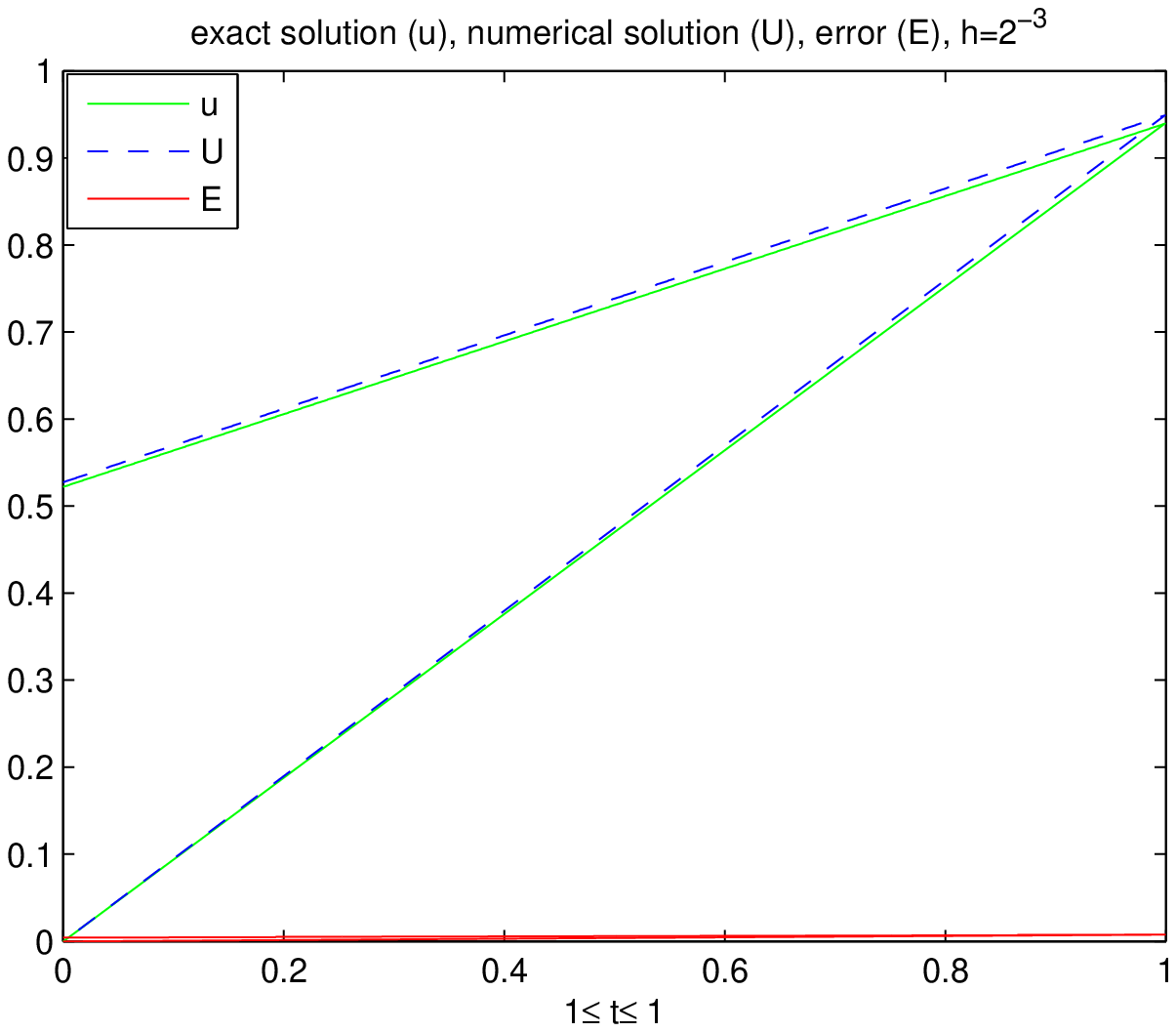,width=7cm} & \psfig{file=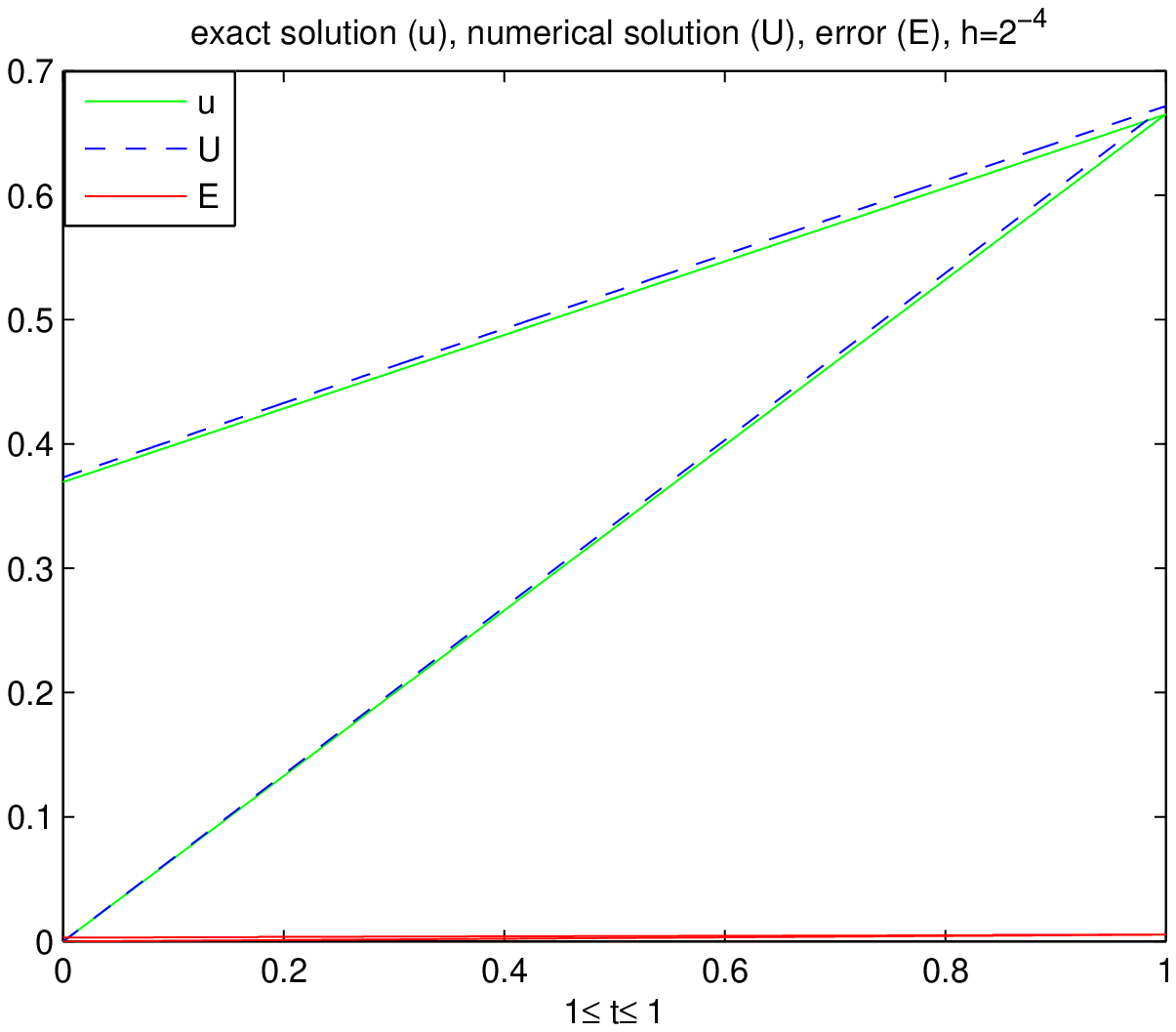,width=7cm}\\
         \end{tabular}
        \end{center}
        \caption{Exact solution (u: in green), Numerical solution (U: in blue) and Error (E: in red) for Problem 2}
        \label{figure3}
        \end{figure}

         \begin{figure}
         \begin{center}
       Analysis of stability and convergence of a two-step Euler/Crank-Nicolson numerical scheme for time-variable fractional mobile-immobile with 
       $\alpha=0.49$ and $k=h^{4}.$
         \begin{tabular}{c c}
         \psfig{file=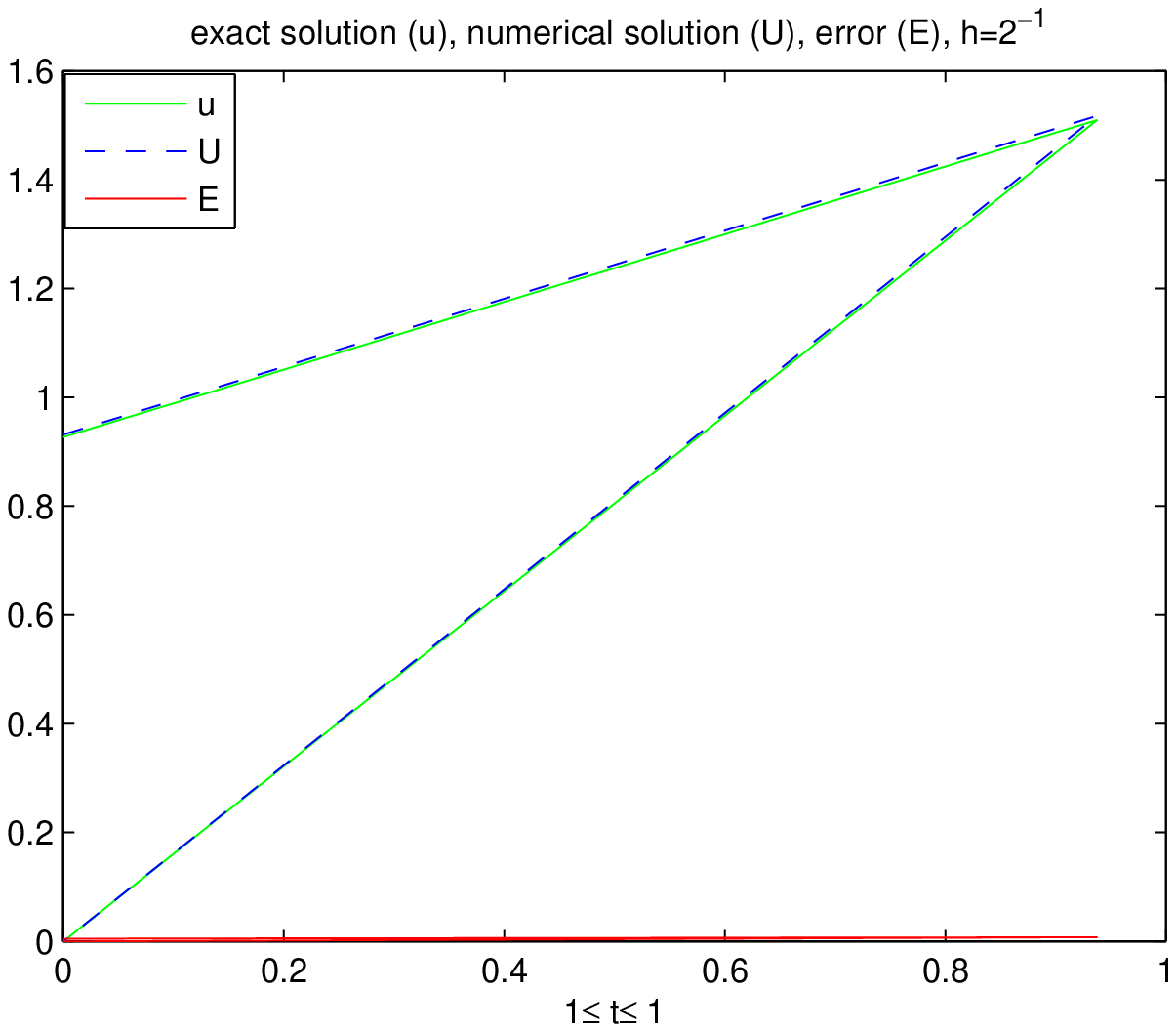,width=7cm} & \psfig{file=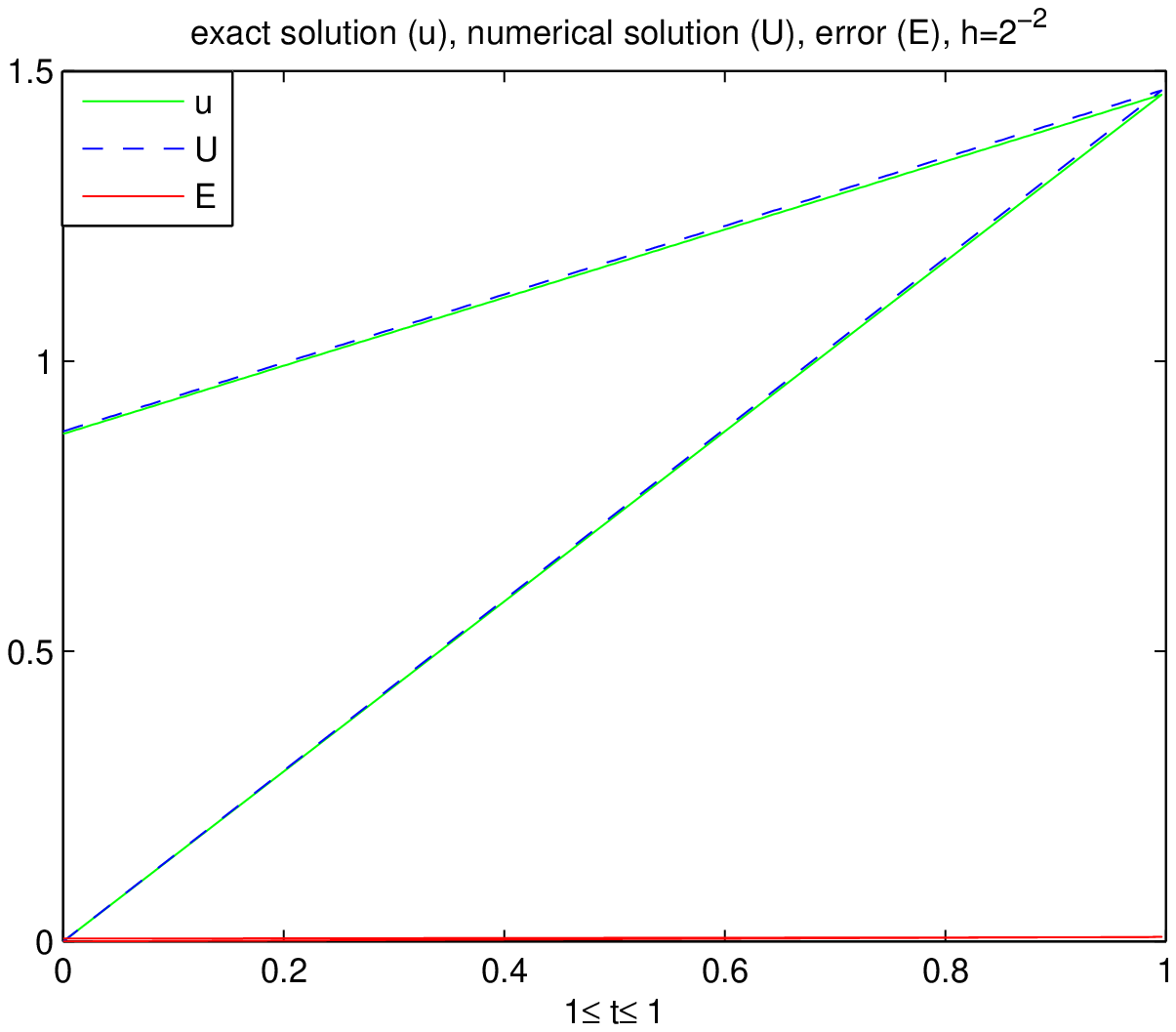,width=7cm}\\
         \psfig{file=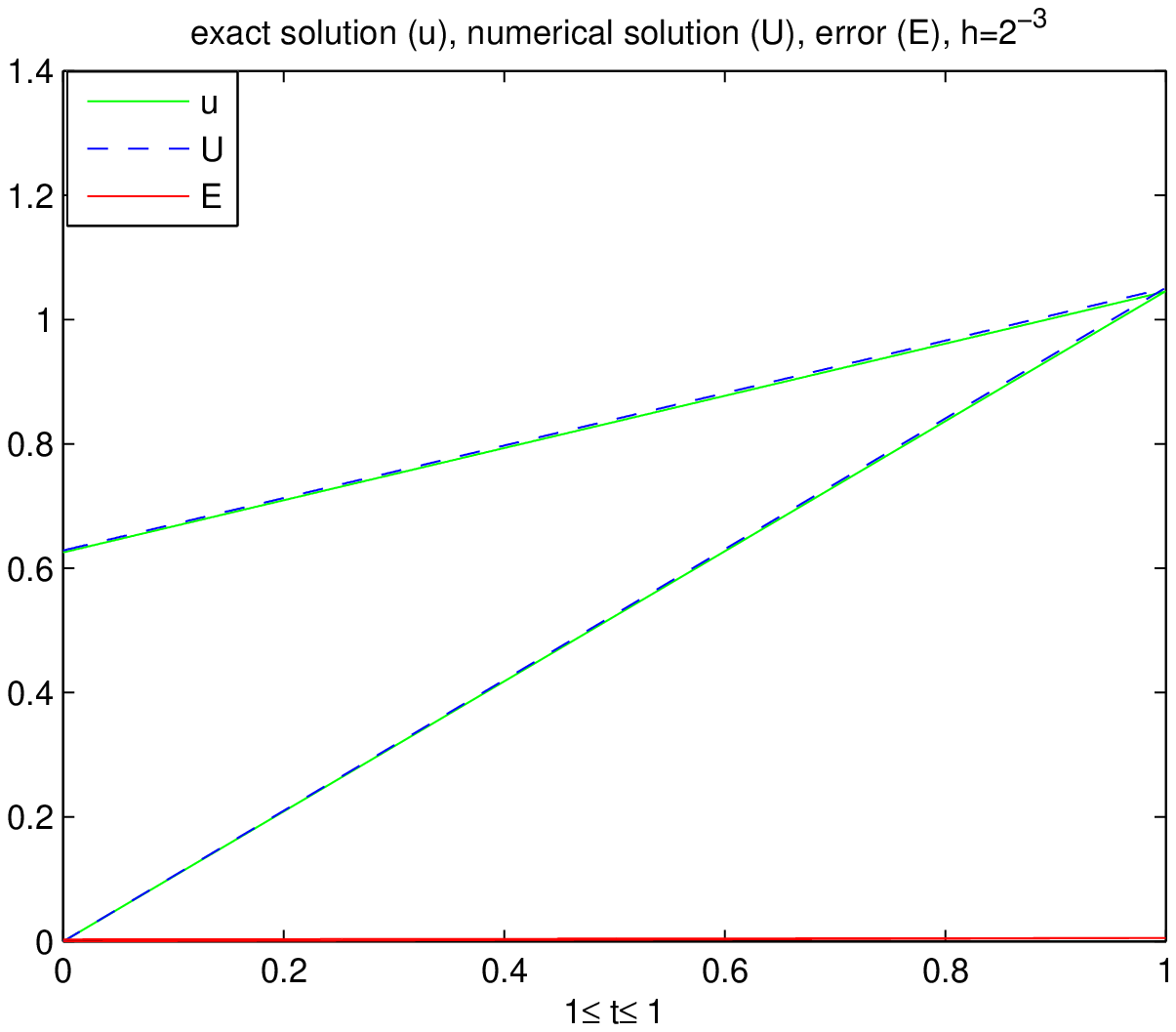,width=7cm} & \psfig{file=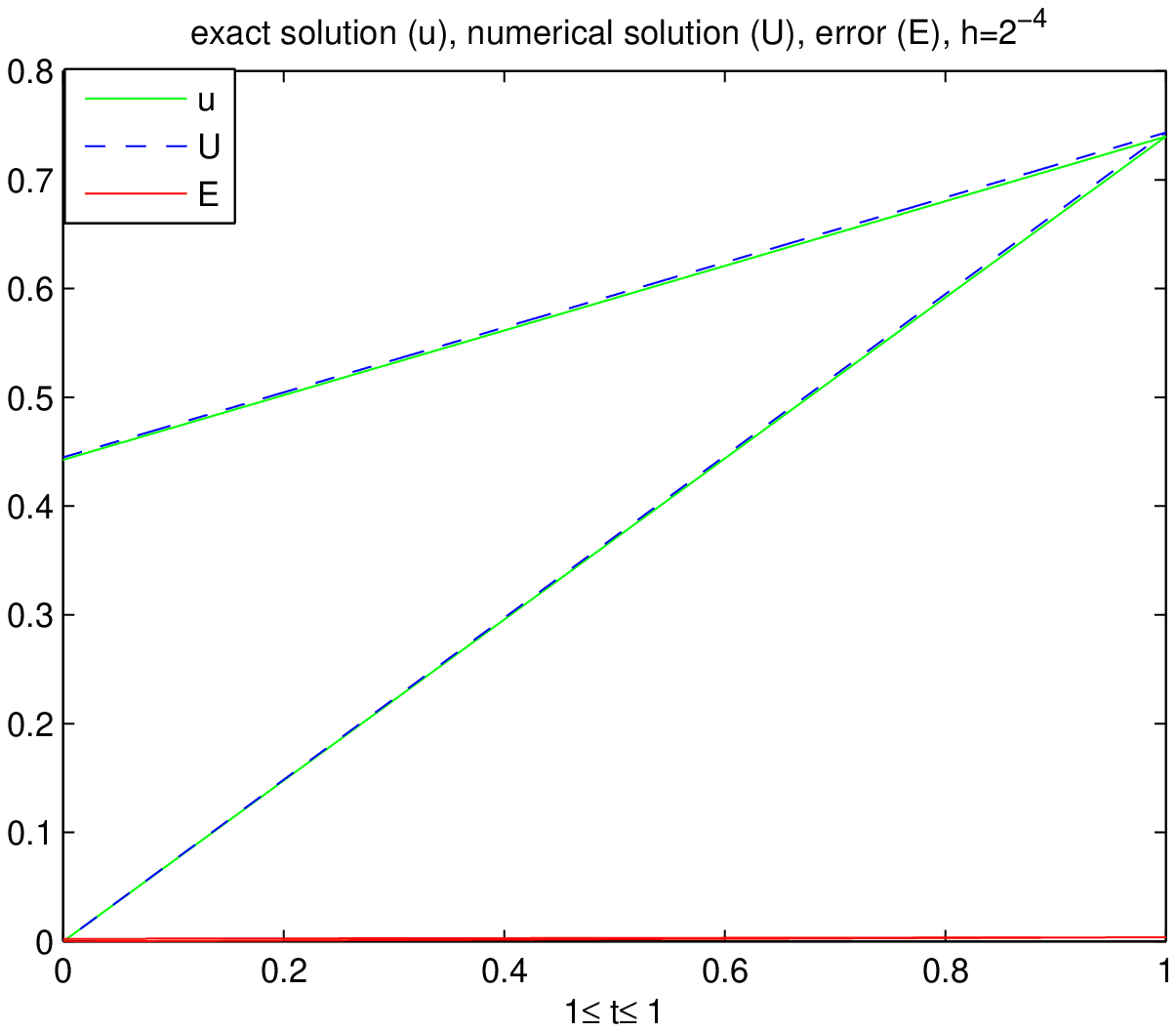,width=7cm}\\
         \end{tabular}
        \end{center}
        \caption{Exact solution (u: in green), Numerical solution (U: in blue) and Error (E: in red) for Problem 2}
        \label{figure4}
        \end{figure}
     \end{document}